\documentclass{article}

\usepackage{arxiv}

\usepackage[utf8]{inputenc} 
\usepackage[T1]{fontenc}    
\usepackage{hyperref}       
\usepackage{url}            
\usepackage{booktabs}       
\usepackage{amsfonts}       
\usepackage{nicefrac}       
\usepackage{microtype}      

%
%
\usepackage{amsmath,amsthm,amsfonts,mathrsfs,anysize,fancyhdr,epsfig,mdframed, float,graphicx,dsfont}

\usepackage{chngcntr}
\usepackage{algorithm,algorithmic}
\theoremstyle{plain}
\newtheorem{theorem}{Theorem}

\newtheorem{lemma}[theorem]{Lemma}                              
\newtheorem{proposition}[theorem]{Proposition}
\newtheorem{corollary}[theorem]{Corollary}
\usepackage{enumitem}
\theoremstyle{definition}

\newtheorem{remark}[theorem]{Remark}
\newtheorem{assumption}[theorem]{Assumption}

\usepackage{chngcntr}
\usepackage{xcolor}
\definecolor{pao_green}{rgb}{0.0, 0.5, 0.1}


\usepackage{mathtools}                                                      
\mathtoolsset{showonlyrefs=true}                                    

%
%
\title{On stochastic mirror descent with interacting particles: convergence properties and variance reduction}
\author{A. Borovykh\thanks{Department of Computing, Imperial College London, \textbf{e-mail: }a.borovykh@imperial.ac.uk}, N. Kantas\thanks{Department of Mathematics, Imperial College London, \textbf{e-mail: }n.kantas@imperial.ac.uk},  P. Parpas\thanks{Department of Computing, Imperial College London, \textbf{e-mail: }panos.parpas@imperial.ac.uk}, G. A. Pavliotis\thanks{Department of Mathematics, Imperial College London, \textbf{e-mail:} g.pavliotis@imperial.ac.uk}}

\begin{document}

\maketitle 

\begin{abstract}
An open problem in optimization with noisy information is the computation of an exact minimizer that is independent of the amount of noise. 
A standard practice in stochastic approximation algorithms is to use a decreasing step-size. This however leads to a slower convergence. A second alternative is to use a fixed step-size and run independent replicas of the algorithm and average these. A third option is to run replicas of the algorithm and allow them to interact. It is unclear which of these options works best. To address this question, we reduce the problem of the computation of an exact minimizer with noisy gradient information to the study of stochastic mirror descent with interacting particles. We study the convergence of stochastic mirror descent and make explicit the tradeoffs between communication and variance reduction. We provide theoretical and numerical evidence to suggest that interaction helps to improve convergence and reduce the variance of the estimate. 
\end{abstract}

\section{Introduction}\label{sec:intro}
Optimization models that arise in artificial intelligence and statistical learning applications often include noisy estimates of the function and its gradient. This is the case when for example the gradient is computed over a subset (or mini-batch) of the data. In such a situation it is known that the optimization algorithm will converge to a neighborhood of the minimizer \cite{gower2019sgd}. The size of the neighborhood depends on the amount of noise. In addition, for constrained optimization problems noise can violate the constraints making the situation even more complex. 

In various applications it can be beneficial to be able to control the fluctuations around the true minimum. The conventional way to control the error is to decrease the step size. Theoretical analysis suggests step sizes which are slow in practice, e.g. $O(1/t)$ in \cite{mertikopoulos18}. An alternative is to use a vanishing noise variance \cite{mertikopoulos18a} or heuristics such as to increase the batch size over time (see e.g. \cite{smith17}); this however increases the computational costs and is difficult to tune. Another option is to run independent replicas of the algorithm. We will refer to each of these runs as a particle. The question we address in this paper is whether it is beneficial to allow these particles to interact with each other. We study this question using the general framework of Stochastic Mirror Descent (SMD) (see \cite{nemirovsky83}). SMD can be used to solve constrained and unconstrained problems, and is known to be an optimal algorithm for certain classes of optimization problems (\cite{beck17book}). 

In this paper we will consider generic convex optimization problems of the form,
\begin{align}
\min_{x\in\mathcal{X}}\{f(x)\},
\end{align}
where  $\mathcal{X}\subset\mathbb{R}^d$ is a closed convex set that describes the constraints. We are interested in investigating the performance and properties when the minimizer $x^*$ is estimated using the following  It\^o stochastic differential equation (SDE)
\begin{align}\label{eq:interact}
dz_t^i = -\nabla f(x^i_t) dt + \sum_{j=1}^N A_{ij}(z_t^j-z_t^i) dt + \sigma dB_t^i,  \quad x^i_t=\nabla\Phi^*(z_t^i),\quad i=1,...,N,
\end{align}
where each particle is driven by independent Brownian motions $B_t^i$ and $\Phi$ is the mirror map used in Mirror Descent (MD); we will present more details in Section \ref{sec:prelim}. The interesting feature here is that particles interact through the matrix $A=\{A_{ij}\}_{i,j=1}^N$, which is a $N\times N$ doubly-stochastic matrix representing the interaction weights. This interaction will attract  particles towards each other. The matrix $A$ represents an interaction graph which imposes communication constraints on the agents: each particle $i$ can communicate directly only with its immediate neighbors, i.e. $j\in\{1,\dots,N\}$ for whom $A_{ij}\neq 0$. In the absence of interactions (i.e. when $A=0$) the dynamics would correspond to independent replicas of SMD. 

The aim of this paper is to demonstrate the advantages of using the interacting particle system, in comparison to independent copies of SMD. We will show that interaction reduces the variance of the estimation of $x^*$ and can improve the convergence properties including the convergence rate to the stationary distribution. In particular, we show that for a (strongly) convex objective the distance to the optimum is bounded by a term related to the standard optimization error in which the noise variance is reduced by a factor of $N$ and an interaction term measuring the deviation of each particle from the system average. This latter term is bounded, so that under certain assumptions consensus and consequently convergence to the optimum is achieved. Using logarithmic Sobolev inequalities we can furthermore improve on this rate and show that the convergence to a stationary distribution can be achieved at an exponential rate. In addition, we believe that the ability to impose communication constraints through the interaction $A$ will be beneficial also from a practical point of view as it will decrease communication costs in a parallel implementation. In this paper we will work with the continuous time formulation for SMD and its proposed interacting version. The main reason behind this is that the analysis for continuous time dynamics provides a clear and complete picture for the benefit of interaction. 
In our numerical experiments we used a simple forward Euler discretization scheme and we expect that the results can only improve with more sophisticated schemes. A detailed error analysis of the discretization scheme is beyond the scope of this paper.


\subsection{Related Literature}

It is a well-known problem that stochastic optimization algorithms converge to a neighborhood of the (local) optimal solution. The size of this neighborhood is proportional to the noise variance (or the second moment of the sample gradient). Traditionally, common strategies for mitigating this include using a decreasing stepsize or attaining vanishing noise variance by increasing the batch size. Both of these come at additional computational costs. In the context of SMD, the authors of \cite{mertikopoulos18} study the convergence of SMD under the assumption that either the variance of the noise decreases over time or the step size is reduced exponentially slow. Various strategies have been also proposed to mitigate the effect of this noise variance to enable convergence closer to the optimum. The effect of sampling strategies such as importance sampling \cite{needell2014stochastic} can decrease the effect of the noise variance. Various variance reduction methods have been proposed, e.g. \cite{johnson2013accelerating}, \cite{defazio2014saga}, \cite{gorbunov2019unified}. The work of \cite{krichene17} and \cite{gu18} studied an accelerated version of SMD, however the distance to the optimum remains a function of the noise.

In this work we propose to use interacting particles to achieve a similar variance reduction. The work most closely related to ours is that of \cite{raginsky12} in continuous time and \cite{duchi11} in discrete time. The authors in \cite{raginsky12} study continuous and interacting SMD however only attained convergence rates for linear dynamics in the mirror domain and did not present any numerical results. The work of \cite{duchi11} covers the deterministic and noisy gradient setting in discrete time in a distributed setting; our work is in continuous time with all particles optimizing the same objective and uses different assumptions on the noise dynamics. 

Our work also has parallels with the vast distributed optimization literature from which we list some indicative recent references: \cite{lin2016distributed}, \cite{shahrampour2017distributed}, \cite{koloskova2019decentralized}, \cite{seaman2017optimal}. We remark that our objective is different from that in distributed optimization. Our goal here is to study how interaction can improve the convergence properties of SMD and analyze the variance reduction effects, while in distributed optimization the task is to optimize efficiently an objective function that is distributed across different nodes. For example a distributed version of the interacting SDE above would require using a different drift $-\nabla f_i(x_t^t)$ in the dynamics. There are also parallels with the work on consensus and synchronization, see e.g. \cite{belykh2004connection}, \cite{shi2015network}, \cite{yu2011consensus}, \cite{yu2008local} and \cite{yu2008global}. Our work generalizes some of these results to the SMD setting and combine it with results on convergence to the minimum. There is also parallel work using stochastic gradient descent (SGD) for constrained sampling problems considered in \cite{hsieh18}, \cite{fox18}, \cite{yokoi19}. While there are many similarities, the goals of sampling and optimization are different and different dynamics are used in each case \cite{ma2019sampling}; specifically, in optimization one wants to converge to the optimizer, while in sampling the objective is to converge to the correct invariant measure. 

\subsection{Contributions and Organization} 

Our main focus is on the convergence properties of stochastic mirror descent with interacting particles (ISMD). We derive regret bounds for the proximity of $f(x^i_t)$ to the optimal value using Lyapunov-based arguments. The application of Lyapunov techniques to optimization problems is an established approach. Our results have similarities with \cite{raginsky12} and \cite{mertikopoulos18} bar a number of differences. We use different Lyapunov functions and analyze a general convex cost function. We show that in the case of ISMD there is a tradeoff between communication cost, i.e. how many particles interact, and variance reduction. In particular, with a fixed learning rate and non-vanishing noise variance, interaction between the particles can reduce the size of the neighborhood around the optimum to which the algorithm converges.  Furthermore, we show that the particles converge to an area around the optimizer at an exponential rate using log-Sobolev inequalities and Bakry-Emery theory (see \cite{bakry1997sobolev} and \cite{bakry2013analysis}). Although such results are standard for the analysis of interacting SDEs with convex potentianls (e.g. \cite{malrieu01,veretennikov2006ergodic}), they have not been considered previously in the context of SMD. In addition, in our setting \emph{the particles are not restricted to all interact simultaneously}, so our results deviate from the mean field type analysis like in \cite{malrieu01,veretennikov2006ergodic}. 

Our contributions can be summarized as follows:
\begin{itemize}
\item We propose interaction between $N$ particles as a way of controlling the convergence of a stochastic optimization algorithm -- and in particular the distance to the minimum. In the presence of noise in the gradients interacting particles are an effective alternative to vanishing learning rates or noise variances. 
\item We establish that interaction leads to variance reduction. In particular 
under strong convexity and smoothness assumptions (to be specified later) we show that there exist positive constants $K_1$ and $K_2$ such that,
\begin{align}
\int_0^Te^{K_1\kappa(t-T)}\mathbb{E}\left[f(x_t^i)-f(x^*)\right]dt\leq K_2\left(\frac{1}{2}e^{-K_1\kappa T}+\frac{\sigma^2}{2N}+\frac{N-1}{N}\frac{d\sigma^2}{\underline{\lambda}+\kappa}\right),
\end{align}
where $\underline{\lambda}$ is the first nonzero eigenvalue of the graph Laplacian and $\kappa$ is the strong convexity constant related to the mirrored objective. A similar average regret bound can be established for the convex case (and $\kappa=0$).
This result, together with the technical assumptions, proofs and extensions will be presented in Propositions \ref{prop:conv_ismd_convex}-\ref{prop:conv_ismd_strconvex} and \ref{prop:fluctuation_stochastic}-\ref{prop:fluctuation_stochastic_strconvex}. The distance to optimality is thus bounded by three terms: 1) a term which decays exponentially with time, 2) a noise term which decreases as the number of particles increases 3) a term which arises from the distance between the particle values which is smaller for a connected graph, a stronger interaction and a larger strong convexity constant. 
\item We provide explicit rates of convergence and concentration inequalities for the particle system using log Sobolev inequalities.
This will be presented in Section \ref{sec:log_sob_N}. These results establish exponential rate of convergence in time for the law of the particle system, so that at equilibrium the samples $z_t^i$ will oscillate around the optimum $z^*$. The strong convexity constant plays a similar role as in the previous result, with a higher $\kappa$ resulting in a smaller distance to the optimum. 
\item Finally, we show the benefits of using interaction in convergence speed and variance reduction through different numerical experiments. In a distributed optimization setup we manage to achieve comparable performance and convergence to the full batch gradient descent, when using a "mini-batch" approach that uses $N$ particles with a mini-batch size that is $1/N$ times the total number of data-points or summands in $f$ (see Figure \ref{fig7} in Section \ref{sec:num} for more details).
\end{itemize}

The organization of this paper is as follows: Section \ref{sec:prelim} presents background material on MD and SMD together with some basic convergence results. Most of the material in this section is known, but we use this opportunity to set our framework and to relate the continuous time formulation with common discrete time implementations. We then propose ISMD in Section \ref{sec:ismd} and provide a detailed convergence analysis on how the particles approach the minimum as time increases. In Section \ref{sec:sample} we apply the Bakry-Emery theory for the law of the corresponding particle system. In Section \ref{sec:num} we demonstrate the performance of ISMD in practice and demonstrate the effect of interaction on decreasing the variance in a variety of examples. Finally, we provide some concluding remarks in Section \ref{sec:concl}.

\subsubsection{Notation} \label{notations}

We will use the following notations: we denote by $I_d$ the $d$-dimensional identity matrix and $1_d$ the $d$-dimensional vector of ones. Let $\textnormal{Diag}(a)$ with $a\in\mathbb{R}^d$ refer to a matrix with diagonal elements $[a_1,...,a_d]$. The Kronecker product is denoted by $\otimes$. Given an arbitrary norm $||\cdot||$ on $\mathbb{R}^d$, we will define $B_{||\cdot||}:=\{v\in\mathbb{R}^d:||v||\leq 1\}$. The dual norm $||\cdot||_*$ is defined as $||z||_*:=\sup\{z^Tv:v\in B_{||\cdot||}\}$. Denote with $||A||_2$ the spectral norm if $A$ is a matrix. Similarly, let $||A||_F$ denote the Frobenius norm given by $||A||_F^2=\sum_{i=1}^N\lambda_i^2$, where $\lambda_i$ denote the eigenvalues of $A$. Assume that the dual norm is compatible with the spectral norm, i.e. $||Az||_*\leq ||A||_2\; ||z||_*$. Unless specified otherwise $K,K'$ etc. denote generic constants whose value may change according to context.  

We will define a $\mu$ strongly convex function $f$ (w.r.t. a norm $\|\cdot\|$) if there exists $m>0$ such that for all $x,y$:
\begin{equation}
f(y)\geq f(x)+\nabla f(x)^{T}(y-x)+{\frac {m}{2}}\|y-x\|^{2}.
\end{equation}
The case $m=0$ corresponds to the convex case. Note that equivalently strong convexity can be defined using
\begin{align}
(x-y)^T(\nabla f(x)-\nabla f(y))\geq m \|x-y\|^2. 
\end{align} 
Furthermore, for a Lipschitz function  we let 
\begin{align}
\left\Vert f\right\Vert _{Lip}=\sup_{x\neq y}\frac{\left|f(x)-f(y)\right|}{\left\|x-y\right\|}<\infty.	
\end{align}
When looking at the Hessian of a convex function $f$ we will denote with $\textnormal{Hess}(f)\succeq \lambda I_d$ that $\textnormal{Hess}(f)- \lambda I_d$ is positive semidefinite with $\lambda\geq 0$.

For random variables $X,Y$ we say $X\overset{d}{=}Y$ to denote equality in distribution. For a measure $\pi$ and measurable function $f$ we use $\pi(f)=\mathbb{E}_{\mu}[f]=\int f(x)\pi(dx)$.
For any measurable space $\left(\mathcal{Z},\mathscr{B}(\mathcal{Z})\right)$
we use $\mathcal{P}(\mathcal{Z})$ to denote the space of all probability
measures on $\mathcal{Z}$ and $\mathcal{P}_2(\mathcal{Z})$ the one for finite second moments. A probability measure $\nu$ satisfies a Log-Sobolev inequality with constant $C$ is for any smooth function $f$ we have,
\begin{align}\label{eq:log_sobolev_def}
\textnormal{Ent}_\nu(f^2)\leq C\nu(|\nabla f|^2),
\end{align}
where the entropy is defined as,
\begin{align}
\textnormal{Ent}_\nu(f^2) = \nu(f^2\log f^2) - \nu(f^2)\log(\nu(f^2)).
\end{align}
For $\mu,\nu\in\mathcal{P}(Z)$ we will
denote the Kullback-Leibler divergence or relative entropy as,
\begin{align}
H(\mu|\nu)=\begin{cases}
\int\log\frac{d\mu}{d\nu}d\mu & \mbox{ if \ensuremath{\mu\ll\nu}},\\
+\infty & \mbox{ otherwise}.
\end{cases}
\end{align}
The 2-Wasserstein distance is defined as 
\begin{align}
W_{2}(\mu,\nu)=\left(\inf_{\Pi(\mu,\nu)}\left(\frac{1}{2}\int\left|z-z'\right|^{2}\Pi(dz,dz')\right)\right)^{1/2},
\end{align}
where $\Pi\in\mathcal{P}_2(\mathcal{Z}\times\mathcal{Z})$ is the coupling
of $\mu,\nu$ with $\Pi\left(\mathcal{Z},\cdot\right)=\nu$ and $\Pi\left(\cdot,\mathcal{Z}\right)=\mu$. 
We will assume that the optimization algorithm is started from a fixed deterministic point, and let $\mathbb{E}[\cdot]$ refer to the expected value conditional on the initial value. 

\section{Background}\label{sec:prelim}

\subsection{Preliminaries} 

We are interested in computing $x^*\in\arg\min_{x\in\mathcal{X}}\{f(x)\}$ under the assumption of smoothness and convexity for $f$ and $\mathcal{X}$. Throughout the paper we will assume continuity and smoothness of $f$:
\begin{assumption}\label{ass:f}
We let $f:\mathbb{R}^d\rightarrow\mathbb{R}$ be $L$-Lipschitz continuous with $L$-Lipschitz continuous gradients.  $\mathcal{X}\subset\mathbb{R}^d$ is a closed convex set.
\end{assumption}
We allow $f$ and $\nabla f$ to be Lipschitz w.r.t arbitrary norms. As all norms are equivalent up to a proportionality constant, a natural choice without loss of generality is the Euclidean norm, which is also self dual, i.e. $||\cdot||_*=||\cdot||_2$ (see \cite[Lemma 2.1.2]{nesterov2018lectures}). Despite this in the remainder we will distinguish between the chosen norm and its dual mainly in order to emphasize the difference when operating in the primal or mirror domain.

We will use a \emph{mirror map} $\Phi:\mathcal{X}\rightarrow\mathbb{R}^d$ to convert the constrained optimization problem to an unconstrained one and adopt the following standard assumptions (e.g. \cite[Assumption 9.3]{beck17book}):
\begin{assumption}\label{ass:mirror1}
$\Phi:\mathcal{X}\rightarrow\mathbb{R}^d$ is $\mu$ strongly convex and continuously differentiable. 
\end{assumption}
The mirror map and its conjugate will be used to pass between the constrained and unconstrained space. Define 
\begin{align}
\Phi^*(z) := \max_{x\in\mathcal{X}}(x^Tz-\Phi(x))
\end{align}
 to be the Legendre-Fenchel convex conjugate, so that
\begin{align}
\nabla \Phi^*(z):=\arg\max_{x\in\mathcal{X}}(z^Tx-\Phi(x)).
\end{align}
Note that one has
\begin{align}
\nabla\Phi\circ\nabla\Phi^*(z)=z.
\end{align} 
Note when $\Phi$ is strongly convex with constant $\mu$, the conjugate correspondence theorem (see Section 5.3.1 in \cite{beck17book})  gives that the gradient of $\Phi^*$ is Lipschitz continuous:
\begin{align}\label{eq:lip_phi_star}
||\nabla\Phi^*(z)-\nabla\Phi^*(z')||\leq \mu^{-1}||z-z'||_*.
\end{align}

We furthermore make the additional assumption that the conjugate of the mirror map maps $\mathbb{R}^d$ to $\mathcal{X}$:
\begin{assumption}\label{ass:mirror2}
$\nabla\Phi^*(\mathbb{R}^d)=\mathcal{X}$ and $||\Delta\Phi^*||_\infty<\infty$.
\end{assumption}
This simplifies the analysis as $\nabla\Phi^*$ maps directly to $\mathcal{X}$ and this avoids the need for projections. Extending our results without using Assumption \ref{ass:mirror2} is possible by following a route similar to \cite{mertikopoulos18}.

We will often use the notation $z^*$ for $z^*\in\mathbb{R}^n$ such that $x^*=\nabla\Phi^*(z^*)\in\arg\min_{x\in\mathcal{X}}f(x)$.

The \emph{Bregman divergence} is defined as,
\begin{align}
D_\Phi(x,y) = \Phi(x)-\Phi(y)-\nabla\Phi(y)^T(x-y).
\end{align}
The Bregman divergence is meant to quantify how far a point $x$ is from $y$ and $\Phi$
can be thought as a distance generating function that adapts to the geometry or structure of $\mathcal{X}$. Two well-known examples of the Bregman divergence are the Euclidean distance with $\Phi(x)=\frac{1}{2}||x||_2^2$, and the simplex constraint $\Phi(x) = \sum_{i=1}^dx(i)\log x(i)-x(i)$. 

An important property of using a mirror map with the Bregman divergence is that 
\[
D_{\Phi^*}(z_t,z^*)=D_\Phi(x^*,x_t),
\]
for some values $z_t$ and $z^*$ ($x_t$ and $x^*$) in unconstrained (constrained) space. 
In addition, the $\Phi$-diameter of $\mathcal{X}$ is defined as
\begin{align}\label{eq:diameter}
D_{\Phi,\mathcal{X}}:=\sup_{x\in\mathcal{X},x'\in\mathcal{X}^o}\sqrt{2 D_\Phi(x,x')},
\end{align} 
and is meant to quantify the size of the constrained space $\mathcal{X}$ when $D_\Phi(x,y)$  is used as a measure of distance. 

We will make the following assumption on the Lipschitz-continuity on the Bregman divergence. 
\begin{assumption}\label{ass:lipschitz_bregman}
The Bregman divergence satisfies a Lipschitz condition of the form,
\begin{align}
|D_{\Phi^*}(x,z)-D_{\Phi^*}(y,z)|\leq K ||x-y||_*.
\end{align}
for some constant $K$. 
\end{assumption}
This is a common technical condition (e.g. see \cite{shahrampour2017distributed}), which is automatically satisfied when the function $\Phi^*$ is Lipschitz. Here it is used only for the strongly convex case in Proposition \ref{prop:conv_ismd_strconvex}. 

We will assume convexity of the objective function $f$.
\begin{assumption}\label{ass:f_convex}
$f:\mathbb{R}^d\rightarrow\mathbb{R}$ is convex.
\end{assumption}
This further implies that the gradient of $f$ is uniformly bounded on $\mathcal{X}$ by the constant $L$, i.e. $||\nabla f(x)||_*\leq L$. 
Alternatively, at times we will work under a strong convexity assumption. 
\begin{assumption}\label{ass:f_strconvex}
The function $f:\mathbb{R}^d\rightarrow\mathbb{R}$ is a $\mu$-strongly convex function with respect to $\Phi$. This means \[D_f(x,y)\geq \mu D_{\Phi}(x,y).\]
\end{assumption} 

We further let the anti-derivative $\mathcal{V}$ be defined as
$\nabla\mathcal{V}=\nabla f\circ \nabla\Phi^*$. We note that knowledge of $\mathcal{V}$ here is mainly used to improve exposition, convey intuition and in practice we work only with $\nabla\mathcal{V}$. The main use of $\mathcal{V}$ itself will be to describe invariant distributions of the mirror descent SDEs as defined in Section \ref{sec:mddef}. For the analysis, we note that Assumptions \ref{ass:f} and \ref{ass:mirror1} imply $\nabla\mathcal{V}$ is Lipschitz and we will use the following convexity assumptions in Section \ref{sec:bound_fluct} to bound the fluctuations of the particle system and in Section \ref{sec:sample} to establish rates of convergence. 
We will distinguish between the convex and strongly convex cases for $\mathcal{V}$.
\begin{assumption}
\label{ass:V_convex}
The function $\mathcal{V}$ is convex.
\end{assumption} 
\begin{assumption}
\label{ass:V_strconvex}
The function $\mathcal{V}$ is $\kappa$- strongly convex
with $\kappa>0$. 
\end{assumption} 

\subsection{Mirror Descent}\label{sec:mddef}

We begin the presentation of mirror descent (MD) with some preliminaries on gradient descent. We provide a brief account of the main approaches in discrete time and later discuss the continuous time setting, which is of interest in this paper.

\subsubsection{Projected Gradient Descent}
One way of finding the solution of a constrained optimization problem is through projected gradient descent (GD), 
\begin{align}\label{proj_gd}
x_{k+1} = \Pi_\mathcal{X}(x_k-\eta\nabla f(x_k)),
\end{align}
where $k$ here denotes discrete time, $\eta$ is the learning rate and we define the projection using the Euclidean norm, i.e. $\Pi_\mathcal{X}(y) = \arg\min_{x\in\mathcal{X}}||y-x||_2^2$. The main drawback of this method is its slow convergence: when $d$, the dimension of $x$, increases the convergence can slow down. This is due to the fact that this scheme is tied to the Euclidean geometry of $\mathbb{R}^d$ through the projection operator $\Pi_\mathcal{X}$. Consider a setting in which $||\nabla f(x)||_\infty\leq 1$. This implies $||\nabla f(x)||_2<\sqrt{d}$, so projected gradient descent will converge at a rate $\sqrt{d/k}$ (see e.g. Example 9.17 from \cite{beck17book}).

\subsubsection{Mirror Descent in Discrete and Continuous Time}\label{MD}
A generalization of the projected gradient descent method is mirror descent, which is given by:
\begin{align}
&\nabla \Phi(y_{k+1}) = \nabla\Phi(x_k) - \eta\nabla f(x_t),\\
& x_{k+1} = \Pi_\mathcal{X}^\Phi(y_{k+1}) := \arg\min_{x\in\mathcal{X}} D_\Phi(x,y_{k+1}).
\end{align}
The last step can be rewritten as,
\begin{align}
 x_{k+1} =&  \arg\min_{x\in\mathcal{X}}\{\Phi(x) - \nabla\Phi(y_{k+1})^Tx\}.
\end{align}

The current point $x_k$ is thus mapped into the dual space $\nabla\Phi(x_k)$; this is then updated to $\nabla \Phi(y_{k+1})$ by stepping in the direction of the negative gradient $-\eta\nabla f(x_k)$ and then mapped back to $x_{k+1}$.  Observe that for $\Phi(x) = \frac{1}{2}||x||_2^2$, we have $\nabla\Phi(x) = x$ and the Bregman divergence is simply given by $D_\Phi(x,y) = \frac{1}{2}||x-y||_2^2$, so that the algorithm is equivalent to projected gradient descent. Consider $
\Phi(x) = \sum_{i=1}^nx(i)\log x(i)$ and let $\mathcal{X}:=\Delta_d$, the $d$-dimensional simplex. Observe that $\nabla_i \Phi(x) = \log x(i) + 1$, and the projection of $y$ with respect to this Bregman divergence onto the simplex $\Delta_d$ amounts to a scaling with the $L^1$-norm. The benefit of MD is that it can adapt to the structure or geometry of $\mathcal{X}$ and hence result in a much better dependence of the rate of convergence on the problem dimension $d$ (see Chapter 9 in \cite{beck17book}).



Using Assumption \ref{ass:mirror2} the continuous time version of mirror descent is given by,
\begin{align}\label{eq:ctmd}
dz_t = -\nabla f(x_t)dt, \;\;\; x_t = \nabla \Phi^*(z_t),
\end{align}
where we have set $\eta=1$ for simplicity.

\subsubsection{Convergence of MD in continuous time}

We will proceed with presenting two well-known convergence results for deterministic mirror descent and the dynamics of \eqref{eq:ctmd}; see \cite{bubeck2014convex}, \cite{nemirovsky83}) for more details. These results are based on using Bregman divergence in a Lyapunov function $V_t$ and are included to ease exposition and to facilitate the comparison to the stochastic case. Note that $D_{\Phi,\mathcal{X}}$ is defined in \eqref{eq:diameter}. 

\begin{lemma}[Convergence of MD for a convex objective]\label{lem:conv_md_convex}
Assume that Assumptions \ref{ass:f} -- \ref{ass:mirror2} and \ref{ass:f_convex} hold. We then have the following result,
\begin{align}
\frac{1}{T}\int_0^T(f(x_t)-f(x^*))dt\leq \frac{D_{\Phi,\mathcal{X}}^2}{2T}.
\end{align}
\end{lemma}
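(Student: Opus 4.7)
The plan is to use a Lyapunov function based on the Bregman divergence, specifically $V_t := D_{\Phi^*}(z_t, z^*)$, and show it is non-increasing along the continuous-time dynamics, with a rate governed by the suboptimality gap $f(x_t) - f(x^*)$. The integration of this inequality in time then immediately yields the averaged regret bound.

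First I would differentiate $V_t$ along the flow \eqref{eq:ctmd}. Writing $V_t = \Phi^*(z_t) - \Phi^*(z^*) - (x^*)^T(z_t - z^*)$ and using $\nabla\Phi^*(z_t) = x_t$ together with $\dot z_t = -\nabla f(x_t)$, the chain rule gives
\begin{align}
\frac{d}{dt} V_t = (x_t - x^*)^T \dot z_t = -(x_t - x^*)^T \nabla f(x_t).
\end{align}
Next I would apply the convexity of $f$ (Assumption \ref{ass:f_convex}) in the form $f(x^*) \geq f(x_t) + \nabla f(x_t)^T(x^* - x_t)$, which rearranges to $(x_t - x^*)^T\nabla f(x_t) \geq f(x_t) - f(x^*)$. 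Combining with the previous identity yields the key Lyapunov inequality
\begin{align}
\frac{d}{dt} D_{\Phi^*}(z_t, z^*) \leq -(f(x_t) - f(x^*)).
\end{align}

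Integrating from $0$ to $T$ and using non-negativity of the Bregman divergence (since $\Phi^*$ is convex) gives
\begin{align}
\int_0^T (f(x_t) - f(x^*))\, dt \leq D_{\Phi^*}(z_0, z^*) - D_{\Phi^*}(z_T, z^*) \leq D_{\Phi^*}(z_0, z^*).
\end{align}
To pass to the stated bound I would invoke the identity $D_{\Phi^*}(z_0, z^*) = D_\Phi(x^*, x_0)$ recalled in the preliminaries, and then apply the definition \eqref{eq:diameter} of the $\Phi$-diameter, which gives $D_\Phi(x^*, x_0) \leq \tfrac{1}{2} D_{\Phi,\mathcal{X}}^2$. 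Dividing by $T$ delivers the claimed bound.

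There is no real obstacle here; the only subtlety worth flagging is that the argument relies on Assumption \ref{ass:mirror2} so that $x_t = \nabla\Phi^*(z_t)$ remains in $\mathcal{X}$ without any projection step (otherwise one would need to absorb a projection correction), and on the smoothness of $\Phi^*$ (guaranteed by Assumption \ref{ass:mirror1} via \eqref{eq:lip_phi_star}) so that the chain rule application is legitimate. This calculation is a standard continuous-time analog of the discrete mirror descent regret analysis and will serve as a template for the stochastic and interacting extensions.
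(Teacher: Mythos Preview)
Your proposal is correct and follows essentially the same approach as the paper: both use the Lyapunov function $V(z_t)=D_{\Phi^*}(z_t,z^*)=\Phi^*(z_t)-\Phi^*(z^*)-\nabla\Phi^*(z^*)^T(z_t-z^*)$, differentiate along the flow to get $\dot V = -(x_t-x^*)^T\nabla f(x_t)\leq f(x^*)-f(x_t)$ via convexity, then integrate and bound $V(z_0)$ by $\tfrac12 D_{\Phi,\mathcal{X}}^2$ through the identity $D_{\Phi^*}(z_0,z^*)=D_\Phi(x^*,x_0)$. Your write-up is slightly more explicit about the use of non-negativity of $D_{\Phi^*}(z_T,z^*)$ and the role of Assumption~\ref{ass:mirror2}, but the argument is the same.
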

\begin{proof}
 Consider, 
\begin{align}\label{eq:deflyap}
V(z_t):=\Phi^*(z_t)-\Phi^*(z^*)-\nabla\Phi^*(z^*)^T(z_t-z^*).
\end{align}
Observe that $V(z^*)=0$ and $V(z_t)\geq 0$ for $z^*\neq z_t$. Furthermore, 
\begin{align}\label{eq:dvconvex}
\frac{dV(z_t)}{dt} &= (\nabla\Phi^*(z_t)^T-\nabla\Phi^*(z^*)^T)\frac{dz_t}{dt}\\
&=- (x_t-x^*)^T\nabla  f(x_t)\leq f(x^*)-f(x_t)\leq 0,
\end{align} 
where in the second equality we used $x_t=\nabla\Phi^*(z_t)$ and in last inequality we have used the convexity of $f$. Integrating $dV(z_t)/dt$ from $t=0$ to $t=T$, using the convexity of $f$ and the relationship $D_{\Phi^*}(z_t,z^*)=D_\Phi(x^*,x_t)$ so that $V(z_0)\leq \frac{1}{2}D_{\Phi,\mathcal{X}}^2$ we obtain,
\begin{align}
\int_0^T (f(x_t)-f(x^*))dt \leq \frac{1}{2}D_{\Phi,\mathcal{X}}^2,
\end{align}
and the claim follows. 
\end{proof}
Observe that furthermore since $\inf_{0\leq t\leq T}f(x_t)\leq \frac{1}{T}\int_0^T f(x_t)dt$, we can obtain a result for the best value of $x_t$,
\begin{align}\label{eq:convratemd}
\inf_{0\leq t\leq T}f(x_t)-f(x^*)\leq \frac{D_{\Phi,\mathcal{X}}^2}{2T}.
\end{align}

Observe that for projected gradient descent we have $\Phi(x) = \frac{1}{2}||x||_2^2$. In this case, $D_{||\cdot||_2^2,\mathcal{X}}=\sup_{x,x'\in\mathcal{X}}||x-x'||_2$.
From Lemma \ref{lem:conv_md_convex} we observe that if $D^2_{\Phi,\mathcal{X}}$ is smaller than $D_{||\cdot||_2^2,\mathcal{X}}$ then mirror descent algorithm could achieve a faster convergence rate than gradient descent due to the smaller constant. 

In the strongly convex case the convergence speed can be increased to an exponential rate. 

\begin{lemma}[Convergence of MD for a strongly convex objective]\label{lem:conv_md_strconvex}
Assume that Assumptions \ref{ass:f} -- \ref{ass:mirror2} and \ref{ass:f_strconvex} hold. We then have the following result,
\begin{align}
D_\Phi(x^*,x_t)\leq e^{-\mu t}D_\Phi(x^*,x_0).
\end{align}
\end{lemma}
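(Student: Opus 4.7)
The plan is to reuse the Lyapunov function from the convex case and exploit the stronger hypothesis $D_f(x,y) \geq \mu D_\Phi(x,y)$ to upgrade the non-positive derivative into a strictly negative one proportional to the Lyapunov function itself. Concretely, I would take
\[
V(z_t) := \Phi^*(z_t) - \Phi^*(z^*) - \nabla\Phi^*(z^*)^T(z_t - z^*) = D_{\Phi^*}(z_t, z^*) = D_\Phi(x^*, x_t),
\]
where the last equality is the usual duality property of the Bregman divergence under the mirror map. This immediately identifies the quantity to be bounded with a Lyapunov candidate.

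Next I would differentiate $V(z_t)$ exactly as in the proof of Lemma~\ref{lem:conv_md_convex}, using $dz_t = -\nabla f(x_t)\,dt$ and $x_t = \nabla\Phi^*(z_t)$, to obtain
\[
\frac{dV(z_t)}{dt} = -(x_t - x^*)^T \nabla f(x_t).
\]
So far this is identical to the convex case; the strengthening comes from how we bound the right-hand side.

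The key step is to apply Assumption~\ref{ass:f_strconvex} in the form
\[
f(x^*) \geq f(x_t) + \nabla f(x_t)^T(x^* - x_t) + \mu\, D_\Phi(x^*, x_t),
\]
which rearranges to $(x_t - x^*)^T \nabla f(x_t) \geq f(x_t) - f(x^*) + \mu D_\Phi(x^*, x_t)$. Since $x^*$ is a minimizer we have $f(x_t) - f(x^*) \geq 0$, and hence
\[
\frac{dV(z_t)}{dt} \leq -\mu\, D_\Phi(x^*, x_t) = -\mu\, V(z_t).
\]
Gronwall's inequality (or a direct integration of $d/dt\big(e^{\mu t} V(z_t)\big) \leq 0$) then gives $V(z_t) \leq e^{-\mu t} V(z_0)$, which is precisely the claimed bound $D_\Phi(x^*, x_t) \leq e^{-\mu t} D_\Phi(x^*, x_0)$.

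The only subtle point, and what I would flag as the main (minor) obstacle, is ensuring that the one-sided strong-convexity inequality is being applied correctly in the constrained setting: the assumption is $D_f(x,y) \geq \mu D_\Phi(x,y)$ with strong convexity taken \emph{with respect to $\Phi$}, not with respect to a norm, so one must resist the temptation to invoke the symmetric $(x-y)^T(\nabla f(x)-\nabla f(y))$-form and its gradient at $x^*$ (which may be nonzero on the boundary). The argument above sidesteps this by using only the one-sided definition of strong convexity together with the optimality inequality $f(x_t) \geq f(x^*)$, and no other first-order optimality condition on $\nabla f(x^*)$ is required.
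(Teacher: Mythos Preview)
Your proof is correct and essentially identical to the paper's: both compute $\frac{d}{dt}D_{\Phi^*}(z_t,z^*)=-(x_t-x^*)^T\nabla f(x_t)$, use $D_f(x^*,x_t)\geq\mu D_\Phi(x^*,x_t)$ together with $f(x_t)\geq f(x^*)$ to obtain $\frac{d}{dt}D_{\Phi^*}(z_t,z^*)\leq-\mu D_{\Phi^*}(z_t,z^*)$, and then conclude via the monotonicity of $e^{\mu t}D_{\Phi^*}(z_t,z^*)$. The only cosmetic difference is that the paper writes out the identity $-(x_t-x^*)^T\nabla f(x_t)=-D_f(x^*,x_t)-f(x_t)+f(x^*)$ explicitly before invoking strong convexity, whereas you rearrange the strong-convexity inequality directly.
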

\begin{proof}
Since $f$ is $\mu$-strongly convex with respect to $\Phi$, it holds,
\begin{align}\label{eq:mustrcon}
D_f(x,y)\geq  \mu D_{\Phi}(x,y).
\end{align}
Observe that,
\begin{align}
\frac{d D_{\Phi^*}(z_t,z^*)}{dt}&=-(x_t-x^*)^T\nabla f(x_t)\\
&= - D_f(x^*,x_t) -f(x_t)+f(x^*)\\
&\leq -\mu D_{\Phi^*}(z_t,z^*)-f(x_t)+f(x^*)\\
&\leq -\mu D_{\Phi^*}(z_t,z^*),\label{eq:dphistr}
\end{align}
where in the second equality we have used the definition of the Bregman divergence, in the second-to-last inequality we have used \eqref{eq:mustrcon} and the fact that $D_\Phi(x^*,x_t)=D_{\Phi^*}(z_t,z^*)$, and in the last inequality we have used the convexity of $f$, namely that $f(x^*)-f(x_t)<0$. 
Let now,
\begin{align}
V(t,z_t):=e^{\mu t} D_{\Phi^*}(z_t,z^*).
\end{align}
Then, 
\begin{align}
\frac{d V(t,z_t)}{dt}&=\mu e^{\mu t}D_{\Phi^*}(z_t,z^*)+e^{\mu t} \frac{d D_{\Phi^*}(z_t,z^*)}{dt}\\
&\leq \mu e^{\mu t}D_{\Phi^*}(z_t,z^*)-\mu e^{\mu t} D_{\Phi^*}(z_t,z^*)\leq 0.
\end{align}
where in the second-to-last inequality we have used \eqref{eq:dphistr}. 
Then, since $\frac{dV(t,z_t)}{dt}\leq 0$ it holds that $V(t,z_t)\leq V(0,z_0)$. Using this and $D_\Phi(x^*,x_t)=D_{\Phi^*}(z_t,z^*)$ we have,
\begin{align}
V(t,z_t)&=e^{\mu t}D_{\Phi^*}(z_t,z^*)=e^{\mu t}D_{\Phi}(x^*,x_t)\\
&\leq V(0,z_0)=D_{\Phi}(x^*,x_0).
\end{align}
This implies,
\begin{align}
D_{\Phi}(x^*,x_t) \leq e^{-\mu t} D_{\Phi}(x^*,x_0).
\end{align}
\end{proof}

\subsection{Stochastic mirror descent}

Consider adding noise to \eqref{proj_gd}: 
\begin{align}
x_{k+1} = \Pi_\mathcal{X}\left(x_k-\eta\epsilon\nabla f(x_k)+\sqrt{\epsilon}\sigma \zeta_k\right),\quad \zeta_k\sim \mathcal{N}(0,I_d),\:\text{i.i.d.}
\end{align} 
In the limit of the time step $\epsilon\downarrow 0$ one recovers the following  It\^o stochastic integral equation \cite{PETTERSSON2000125}:
\begin{align}
& x_{t+1}= x_0-\eta\int_0^t\nabla f(x_s)ds+\sigma B_t +u_t, \label{eq:proj_sde} \\
& u_t=\int_0^t 1_{x_t\in\partial\mathcal{X}} du_s.
\end{align}
Here  $B_t$ is a $d$-dimensional Brownian motion and a local time construction on the boundary of $\mathcal{X}$ is used for $u_t$ to ensure that $X_t\in\mathcal{X}$ almost surely; see \cite{PETTERSSON2000125,Storm95} and the references therein for details. Understanding in detail the ergodicity properties or long time behaviour of \eqref{eq:proj_sde} is largely unexplored in the literature. We note that at the level of the evolution of densities, reflecting boundary conditions are commonly used for the forward or Fokker-Planck equation, e.g. \cite[Section 4.2.2]{pavliotis14book}, but a detailed study of \eqref{eq:proj_sde} along these lines goes beyond the scope of this article. In the remainder we will consider the mapping to be on the mirror domain.

The stochastic mirror descent (SMD) is given by the SDE,
\begin{align}\label{eq:smd}
dz_t = -\nabla f\circ\nabla\Phi^*(z_t)dt + \sigma dB_t,
\end{align}
where $x_t=\nabla\Phi^*(z_t)$ and as before in \eqref{eq:ctmd} we use $\eta=1$. Note that when $\sigma=0$ we obtain a deterministic variant of continuous time mirror descent. The long time behavior of such dynamics are very well understood. The law of $z_t$ will converge to an invariant distribution with density proportional to $\exp(-\frac{2}{\sigma^2}\mathcal{V})$ where recall $\mathcal{V}$ denotes the anti-derivative of $\nabla f\circ\nabla\Phi^*$. We will discuss this in more detail later in Section \ref{sec:sample}. The downside of this dynamics is that SMD cannot converge to the exact solution, i.e. the noise keeps the algorithm from fully converging exactly to $x^*$ (see \cite{raginsky12}, \cite{mertikopoulos18}). The latter would require the equilibrium distribution being a Dirac/atomic measure, which in turn requires using a decreasing noise in time; see  \cite{mertikopoulos18} for more details.
 
Another interesting remark is that the dynamics in the dual space is then equivalent to that of SGD with objective function $\mathcal{V}$. It is well-known that for a convex objective SGD methods converge in $O(1/t)$ to a neighbourhood of the optimum the size of which is proportional to the noise variance \cite{gower2019sgd}. A similar result can be shown to hold for SMD (see e.g. \cite{raginsky12}, \cite{wilson2018lyapunov}, \cite{mertikopoulos18}). 
\begin{remark}[SGD and additive noise]
We note that in this work we refer to SGD as the SDE equivalent of gradient descent with additive noise, i.e. we use a constant diffusion coefficient. This is in disagreement with recent works in the areas of machine learning whereby the noise in SGD arises only from sub-sampling a deterministic loss function. The latter can be modelled asymptotically as an SDE with multiplicative noise and state-dependent diffusion; see for instance \cite{li2017stochastic,li2019stochastic,mandt2017stochastic}.
\end{remark} 

\subsubsection{Convergence of SMD}

We proceed with presenting the analogs of Lemmata \ref{lem:conv_md_convex}-\ref{lem:conv_md_strconvex} for the SMD case.

\begin{proposition}[Convergence of SMD for a convex objective]\label{prop:conv_smd_convex}
Assume that Assumptions \ref{ass:f} -- \ref{ass:mirror2} and \ref{ass:f_convex} hold. We then have the following result, 
\begin{align}
\mathbb{E}\left[\frac{1}{T}\int_0^T(f(x_t) -f(x^*))dt\right] \leq& \frac{1}{2T}D^2_{\Phi,\mathcal{X}} + \frac{1}{2}\sigma^2  ||\Delta\Phi^*||_\infty.
\end{align}
\end{proposition}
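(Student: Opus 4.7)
The plan is to adapt the Lyapunov argument from Lemma \ref{lem:conv_md_convex} by adding the Itô correction that arises from the Brownian noise. I would reuse the same Lyapunov function
\begin{align}
V(z_t) := \Phi^*(z_t)-\Phi^*(z^*)-\nabla\Phi^*(z^*)^T(z_t-z^*),
\end{align}
whose gradient is $\nabla V(z_t)=\nabla\Phi^*(z_t)-\nabla\Phi^*(z^*)=x_t-x^*$ and whose Hessian is $\nabla^2\Phi^*(z_t)$. Applying Itô's formula to $V(z_t)$ along the SDE \eqref{eq:smd} gives
\begin{align}
dV(z_t)=(x_t-x^*)^T\bigl(-\nabla f(x_t)\,dt+\sigma\,dB_t\bigr)+\tfrac{1}{2}\sigma^2\Delta\Phi^*(z_t)\,dt.
\end{align}

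Next I would integrate from $0$ to $T$ and take expectations. Since $x_t\in\mathcal{X}$ and $\nabla\Phi^*$ is Lipschitz (see \eqref{eq:lip_phi_star}), the integrand $(x_t-x^*)^T\sigma$ is bounded, so $\int_0^t(x_s-x^*)^T\sigma\,dB_s$ is a true martingale with zero expectation. Using convexity of $f$ exactly as in \eqref{eq:dvconvex}, namely $(x_t-x^*)^T\nabla f(x_t)\geq f(x_t)-f(x^*)$, and dropping the nonnegative term $\mathbb{E}[V(z_T)]\geq 0$, I obtain
\begin{align}
\mathbb{E}\!\left[\int_0^T(f(x_t)-f(x^*))\,dt\right]\leq V(z_0)+\tfrac{1}{2}\sigma^2\int_0^T\mathbb{E}[\Delta\Phi^*(z_t)]\,dt.
\end{align}

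To finish I would bound the Itô correction uniformly using Assumption \ref{ass:mirror2}, which gives $\Delta\Phi^*(z_t)\leq\|\Delta\Phi^*\|_\infty$, and bound the initial Lyapunov value by the $\Phi$-diameter via the identity $V(z_0)=D_{\Phi^*}(z_0,z^*)=D_\Phi(x^*,x_0)\leq\tfrac{1}{2}D^2_{\Phi,\mathcal{X}}$ from the definition \eqref{eq:diameter}. Dividing by $T$ then yields the claimed bound.

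The deterministic Lyapunov computation and convexity step are essentially identical to Lemma \ref{lem:conv_md_convex}; the only genuinely new piece is the Itô correction $\tfrac{1}{2}\sigma^2\Delta\Phi^*(z_t)$, which is precisely why Assumption \ref{ass:mirror2} includes the finiteness of $\|\Delta\Phi^*\|_\infty$. The mildly technical point is verifying that the stochastic integral is a martingale (not merely a local one) so that its expectation vanishes after integration; boundedness of $\mathcal{X}$ implicit in the finite $\Phi$-diameter, together with the Lipschitz continuity of $\nabla\Phi^*$, takes care of this without further work.
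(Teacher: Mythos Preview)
Your proposal is correct and follows essentially the same route as the paper: the identical Lyapunov function $V(z_t)=D_{\Phi^*}(z_t,z^*)$, It\^o's formula, the convexity bound $(x_t-x^*)^T\nabla f(x_t)\geq f(x_t)-f(x^*)$, the diameter bound on $V(z_0)$, and the uniform bound $\Delta\Phi^*\leq\|\Delta\Phi^*\|_\infty$ on the It\^o correction. If anything, you are slightly more careful than the paper in justifying why the stochastic integral has zero expectation; the paper simply cites adaptedness, whereas you correctly note that boundedness of $x_t-x^*$ (from the finite $\Phi$-diameter) makes it a true martingale.
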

\begin{proof}
We define the following Lyapunov function, 
\begin{align}\label{eq:deflyap2}
V(z_t):=\Phi^*(z_t)-\Phi^*(z^*)-\nabla\Phi^*(z^*)^T(z_t-z^*).
\end{align}
We then have the following results. 
By It\^o's lemma we obtain, 
\begin{align}
dV(z_t)=& \left(\nabla f(x_t)^T(x^*-x_t)+\frac{1}{2}\sigma^2\Delta\Phi^*(z_t)\right)dt+ \sigma (x_t-x^*)^TdB_t,
\end{align}
where for the last term we have $\sigma (x_t-x^*)^TdB_t\overset{d}{=}\sigma ||x_t-x^*||_2dB_t$. Integrating the expression we obtain,
\begin{align}
V(z_T)=V(z_0)+\int_0^T (x^*-x_t)^T\nabla f(x_t) dt + \frac{1}{2}\sigma^2\int_0^T\Delta\Phi^*(z_t)dt + \sigma\int_0^T||x_t-x^*||_2dB_t.
\end{align}
Observe again that, $(x^*-x_t)^T\nabla f(x_t)\leq f(x^*)-f(x_t)$,
by convexity of $f$. Furthermore, as before $V(z_0)\leq \frac{1}{2}D^2_{\Phi,\mathcal{X}}$. Lastly, we have $\frac{1}{2}\sigma^2\int_0^T\Delta\Phi^*(z_t)dt\leq \frac{1}{2}\sigma^2 T ||\Delta\Phi^*||_\infty$. Rearranging we then obtain,
\begin{align}
\int_0^T (f(x_t) -f(x^*))dt \leq \frac{1}{2}D^2_{\Phi,\mathcal{X}} + \frac{1}{2}\sigma^2 T ||\Delta\Phi^*||_\infty + \sigma\int_0^T||x_t-x^*||_2dB_t.
\end{align}
Using the fact that the process $\{||x_t-x^*||_2\}_{t\geq 0}$ is adapted to the filtration of $x_t$ so that \begin{align}
\mathbb{E}\left[\sigma\int_0^T||x_t-x^*||_2dB_t\right]=0,
\end{align} 
we have,
\begin{align}
\mathbb{E}\left[\frac{1}{T}\int_0^T (f(x_t) -f(x^*))dt\right] \leq \frac{1}{2T}D^2_{\Phi,\mathcal{X}} + \frac{1}{2}\sigma^2  ||\Delta\Phi^*||_\infty
\end{align}
\end{proof}

A similar bound can be obtained for the time average $\mathbb{E}\left[f\left(\frac{1}{T}\int_0^T x_tdt\right) -f(x^*)\right] $ by using Jensen's inequality.
Similar to the deterministic case, if $D^2_{\Phi,\mathcal{X}}$ is smaller than $D_{||\cdot||_2^2,\mathcal{X}}$ then MD achieves a faster convergence rate than gradient descent.
 
\begin{proposition}[Convergence of SMD for a strongly convex objective]\label{prop:conv_smd_strconvex}
Assume that Assumptions \ref{ass:f} -- \ref{ass:mirror2} and \ref{ass:f_strconvex} hold.
Then,
\begin{align}
\mathbb{E}\left[D_{\Phi}(x^*,x_T)\right]\leq &e^{-\mu T} \frac{1}{2}D_{\Phi,\mathcal{X}}^2 + \frac{1}{2\mu}\sigma^2 (1-e^{-\mu T})||\Delta \Phi^*||_{\infty}.
\end{align}
\end{proposition}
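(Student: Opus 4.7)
The plan is to combine the Lyapunov/Itô calculation used in the proof of Proposition \ref{prop:conv_smd_convex} with the exponential-decay Grönwall argument from Lemma \ref{lem:conv_md_strconvex}. I would work with the same Lyapunov function
\[
V(z_t)=\Phi^*(z_t)-\Phi^*(z^*)-\nabla\Phi^*(z^*)^{T}(z_t-z^*),
\]
which equals $D_{\Phi^*}(z_t,z^*)=D_{\Phi}(x^*,x_t)$. Applying It\^o's formula to $V$ along the SMD dynamics \eqref{eq:smd}, exactly as in the proof of Proposition \ref{prop:conv_smd_convex}, yields
\[
dV(z_t) = \Bigl(\nabla f(x_t)^{T}(x^*-x_t)+\tfrac{1}{2}\sigma^{2}\Delta\Phi^*(z_t)\Bigr)\,dt + \sigma\,(x_t-x^*)^{T}dB_t.
\]

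The key step is to upgrade the convexity bound $\nabla f(x_t)^{T}(x^*-x_t)\le f(x^*)-f(x_t)$ to a strict contraction using Assumption \ref{ass:f_strconvex}. Writing the Bregman divergence of $f$ as
\[
D_f(x^*,x_t)=f(x^*)-f(x_t)-\nabla f(x_t)^{T}(x^*-x_t),
\]
and using $D_f(x^*,x_t)\ge \mu\, D_{\Phi}(x^*,x_t)=\mu\,V(z_t)$ together with $f(x^*)\le f(x_t)$, I obtain
\[
\nabla f(x_t)^{T}(x^*-x_t) = f(x^*)-f(x_t) - D_f(x^*,x_t) \le -\mu\,V(z_t).
\]
Substituting into the It\^o expansion and bounding $\Delta\Phi^*(z_t)\le\|\Delta\Phi^*\|_{\infty}$ gives the differential inequality
\[
dV(z_t) \le -\mu\,V(z_t)\,dt + \tfrac{1}{2}\sigma^{2}\|\Delta\Phi^*\|_{\infty}\,dt + \sigma\,(x_t-x^*)^{T}dB_t.
\]

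Next I would take expectations. Since the integrand $\|x_t-x^*\|_2$ of the stochastic integral is bounded (as $\mathcal{X}$ is a fixed set accessed via $\nabla\Phi^*$ which is Lipschitz by \eqref{eq:lip_phi_star}, and in any case adapted), the martingale term has zero mean, so
\[
\frac{d}{dt}\mathbb{E}[V(z_t)] \le -\mu\,\mathbb{E}[V(z_t)] + \tfrac{1}{2}\sigma^{2}\|\Delta\Phi^*\|_{\infty}.
\]
Multiplying by the integrating factor $e^{\mu t}$, integrating from $0$ to $T$, and using $V(z_0)\le\tfrac{1}{2}D_{\Phi,\mathcal{X}}^{2}$, I arrive at
\[
\mathbb{E}[V(z_T)] \le e^{-\mu T}\tfrac{1}{2}D_{\Phi,\mathcal{X}}^{2} + \tfrac{1}{2\mu}\sigma^{2}\|\Delta\Phi^*\|_{\infty}(1-e^{-\mu T}),
\]
which is the claimed bound once $V(z_T)$ is identified with $D_{\Phi}(x^*,x_T)$.

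The only non-routine step is the direction of the strong-convexity inequality: one must be careful to extract $-\mu V(z_t)$ (rather than $+\mu V(z_t)$) from the drift by combining $D_f\ge \mu D_{\Phi}$ with the sign of $f(x^*)-f(x_t)$. Everything else is a standard Grönwall argument applied to an It\^o drift-diffusion Lyapunov estimate, completely parallel to the deterministic Lemma \ref{lem:conv_md_strconvex} with the extra $\tfrac{1}{2}\sigma^{2}\|\Delta\Phi^*\|_{\infty}$ contribution playing the role of an inhomogeneous forcing term.
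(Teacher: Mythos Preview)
Your proposal is correct and follows essentially the same approach as the paper: the same Lyapunov function $V=D_{\Phi^*}(z_t,z^*)$, the same It\^o expansion, the same strong-convexity step $\nabla f(x_t)^T(x^*-x_t)\le -\mu D_{\Phi^*}(z_t,z^*)$, and the same handling of the martingale term. The only cosmetic difference is that the paper builds the integrating factor into the Lyapunov function from the outset, applying It\^o to $e^{\mu t}D_{\Phi^*}(z_t,z^*)$ and then integrating, whereas you apply It\^o to $D_{\Phi^*}(z_t,z^*)$, take expectations, and invoke Gr\"onwall afterwards---these are equivalent.
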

\begin{proof}
Let $V(t,z_t)=e^{\mu t}D_{\Phi^*}(z_t,z^*)$. We have through It\^o's lemma, 
\begin{align}
dV(t,z_t) = \left(e^{\mu t}(x^*-x_t)^T\nabla f(x_t)+\mu e^{\mu t} D_{\Phi^*}(z_t,z^*)+ \frac{1}{2}\sigma^2e^{\mu t}\Delta \Phi^*(z_t)\right)dt + \sigma e^{\mu t} (x_t-x^*)^Td B_t. 
\end{align}
Integrating the expression we obtain,
\begin{align}
V(T,z_T)=&V(0,z_0)+ \int_0^T e^{\mu t} \left((x^*-x_t)^T\nabla f(x_t)+ \mu D_{\Phi^*}(z_t,z^*)\right)dt + \int_0^T \frac{1}{2}\sigma^2e^{\mu t}\Delta \Phi^*(z_t) dt \\
&+ \int_0^T \sigma e^{\mu t} (x_t-x^*)^Td B_t.
\end{align}
Observe again that for a $\mu$-strongly convex function $D_f(x^*,x_t)\geq \mu D_\Phi(x^*,x_t)$ and by the properties of the mirror map $D_\Phi(x^*,x_t)=D_{\Phi^*}(z_t,z^*)$. This implies that $\nabla f(x_t)^T(x^*-x_t) \leq -D_f(x^*,x_t)-f(x_t)+f(x^*)\leq -\mu D_\Phi(x^*,x_t)=-\mu D_{\Phi^*}(z_t,z^*)$. Furthermore, $\int_0^T \sigma e^{\mu t} (x_t-x^*)^Td B_t\overset{d}{=}\int_0^T \sigma e^{\mu t} ||x_t-x^*||_2d B_t$ and it holds that $\int_0^T \frac{1}{2}\sigma^2 e^{\mu t} \Delta \Phi^*(z_t) dt\leq \frac{1}{2}(e^{\mu T}-1)\sigma^2||\Delta \Phi^*||_{\infty}$. Using these,
\begin{align}
V(T,z_T) &\leq V(0,z_0) + \frac{1}{2}\sigma^2 (e^{\mu T}-1) ||\Delta \Phi^*||_{\infty} + \int_0^T \sigma e^{\mu t} ||x_t-x^*||_2d B_t.
\end{align}
Taking expected values we obtain,
\begin{align}
\mathbb{E}\left[D_{\Phi}(x^*,x_t)\right]\leq e^{-\mu T} D_{\Phi,\mathcal{X}}^2 + \frac{1}{2}\sigma^2 (1-e^{-\mu T})||\Delta \Phi^*||_{\infty},
\end{align}
where we have used that the expected value of an It\^o integral is zero and the relationship $D_{\Phi^*}(z_t,z^*)=D_{\Phi}(x^*,x_t)$.
\end{proof}

Again in the strongly convex case the convergence speed can be increased. However, as expected for both the convex and strongly convex results, as $T\rightarrow\infty$, the gap to optimality is bounded from above by a quantity proportional to the noise variance $\sigma^2$. The solution will not converge exactly to the minimum, but oscillate around it.

\section{Interacting Stochastic Mirror Descent}\label{sec:ismd}

It is clear that when using a fixed learning rate and a constant noise variance, SGD does not converge to the optimum. In fact, the distance to the optimum is controlled by the amount of noise. Various strategies such as using a vanishing noise variance by increasing the batch size or variance-reduced SGD (see e.g. \cite{johnson2013accelerating}, \cite{defazio2014saga}, \cite{gorbunov2019unified}) have been proposed. In this work, we consider an alternative approach for controlling the distance from the optimum; namely by using interactions between the particles \cite{raginsky12}. 

We consider as an alternative to \eqref{eq:smd} and we consider the following interacting particle dynamics,
\begin{align}\label{eq:cmdpart}
dz_t^i = -\nabla f\circ\nabla \Phi^*(z_t^i) dt + \sum_{j=1}^N A_{ij}(z_t^j-z_t^i) dt + \sigma dB_t^i,\quad i=1,...,N,
\end{align}
where $B_t^i$ are independent Brownian motions and $A=\{A_{ij}\}_{i,j=1}^N$ is an $N\times N$ doubly-stochastic matrix representing the interaction weights. As mentioned in Section \ref{sec:intro} the matrix $A$ represents an interaction graph which can also impose communication constraints on the particles. Each particle $i$ will be influenced only from particles $j$ for which $A_{ij}\neq 0$. 

The discretized version of \eqref{eq:cmdpart} is then,
\begin{align}
&x_k^i = \nabla \Phi^*(z_k^i), \label{eq:imddisc_1}\\
&z_{k+1}^i = z_k^i - \epsilon\nabla f(x_k^i) + \epsilon \sum_{j=1}^N A_{ij} (z_k^j-z_k^i) + \sqrt{\epsilon}\sigma\zeta_k^i, \label{eq:imddisc}
\end{align}
where $\zeta_t\sim\mathcal{N}(0,I_d)$ and $\epsilon$ is the discretization parameter. In the absense of interactions, i.e. when $A_{ij}=0$, we obtain a discretized version of SMD. 

We remark here that the interacting mirror descent algorithm is equivalent to an algorithm in which the particle interaction is defined using a Bregman divergence instead of the $L^2$ distance. In particular for the time discretized version \eqref{eq:imddisc_1}-\eqref{eq:imddisc} we have the following result. 
\begin{lemma}[Interacting mirror descent as Bregman distance interaction in primal space]\label{lem:imd_bregman}
Let $\sigma=0$ and $\epsilon=1$, then interacting mirror descent in \eqref{eq:imddisc_1}-\eqref{eq:imddisc} can be rewritten as:
\begin{align}
x_{k+1}^i = \arg\min_{x\in\mathcal{X}} \{\nabla f(x_k^i)^Tx  +\sum_{j=1}^N A_{ij}D_\Phi(x,x_{k}^j)\}.
\end{align}
\end{lemma}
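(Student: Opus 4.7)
The plan is to verify this by writing down the first-order optimality condition for the proposed minimization problem and showing that it recovers precisely the update \eqref{eq:imddisc_1}--\eqref{eq:imddisc} when $\sigma=0$ and $\epsilon=1$. Since $\Phi$ is strongly convex by Assumption \ref{ass:mirror1}, so is $x\mapsto D_\Phi(x,x_k^j)$, and hence the objective
\[
g(x):=\nabla f(x_k^i)^T x + \sum_{j=1}^N A_{ij} D_\Phi(x,x_k^j)
\]
is strongly convex, so the first-order condition on $\mathcal{X}$ is both necessary and sufficient for the (unique) minimizer.

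First I would compute $\nabla_x D_\Phi(x,x_k^j) = \nabla \Phi(x) - \nabla \Phi(x_k^j)$ directly from the definition of the Bregman divergence, then take gradients term by term:
\[
\nabla g(x) = \nabla f(x_k^i) + \sum_{j=1}^N A_{ij}\bigl(\nabla\Phi(x) - \nabla\Phi(x_k^j)\bigr).
\]
Here I would use the key fact that $A$ is doubly stochastic, so that $\sum_{j=1}^N A_{ij}=1$, which collapses the first term inside the sum into a single $\nabla\Phi(x)$. Recalling $\nabla\Phi(x_k^j)=z_k^j$, setting $\nabla g(x^\star)=0$ gives
\[
\nabla\Phi(x^\star) \;=\; -\nabla f(x_k^i) + \sum_{j=1}^N A_{ij} z_k^j.
\]

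Next I would show that the right-hand side equals $z_{k+1}^i$. Starting from the update \eqref{eq:imddisc} with $\epsilon=1$ and $\sigma=0$,
\[
z_{k+1}^i = z_k^i - \nabla f(x_k^i) + \sum_{j=1}^N A_{ij} z_k^j - z_k^i \sum_{j=1}^N A_{ij},
\]
and again invoking $\sum_j A_{ij}=1$ cancels the $z_k^i$ terms, leaving exactly $-\nabla f(x_k^i) + \sum_j A_{ij} z_k^j$. Combining, $\nabla\Phi(x^\star)=z_{k+1}^i$, and by the conjugate relation $x^\star=\nabla\Phi^*(z_{k+1}^i)=x_{k+1}^i$.

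Because the computation reduces to linearity of the gradient of $D_\Phi$ in its first argument and the row-stochasticity of $A$, there is no real obstacle; the only subtlety worth flagging is that doubly-stochasticity (rather than mere row-stochasticity) is not needed for this lemma---only $\sum_j A_{ij}=1$ matters---though column-stochasticity will be used elsewhere in the paper. I would also briefly remark that the argument implicitly relies on Assumption \ref{ass:mirror2} so that $\nabla\Phi^*$ maps into $\mathcal{X}$ and no additional projection step is required to interpret $x_{k+1}^i$ as the unconstrained minimizer over $\mathcal{X}$.
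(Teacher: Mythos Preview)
Your proof is correct and takes a genuinely different route from the paper's. The paper proceeds ``forward'': it starts from $x_{k+1}^i = \nabla\Phi^*(z_{k+1}^i)$, rewrites this as $\arg\min_{x\in\mathcal{X}} D_\Phi(x,\nabla\Phi^*(z_{k+1}^i))$, expands using $\nabla\Phi\circ\nabla\Phi^*=\mathrm{id}$, substitutes the update rule for $z_{k+1}^i$, and then manipulates the objective inside the $\arg\min$---adding and removing terms constant in $x$---until the Bregman-divergence form emerges. Your approach instead works ``backward'' via first-order optimality: you differentiate the proposed objective, set the gradient to zero, and identify the resulting stationarity condition $\nabla\Phi(x^\star)=z_{k+1}^i$ directly with the mirror update. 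Both arguments rely on the same key ingredient (row sums $\sum_j A_{ij}=1$ to collapse the $\nabla\Phi(x)$ and $z_k^i$ terms), and both implicitly need Assumption~\ref{ass:mirror2} so that the stationary point lies in $\mathcal{X}$ without an extra projection. Your route is shorter and makes the structural dependence transparent; the paper's argmin-chaining argument involves more bookkeeping but sidesteps having to justify why the unconstrained first-order condition characterizes the constrained minimizer over $\mathcal{X}$. Your remark that only row-stochasticity (not full double stochasticity) is needed for this particular lemma is also correct.
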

\begin{proof}
 We have,
\begin{align}
x_{k+1}^i &= \nabla \Phi^*(z_{k+1}^i)\\
&=\arg\min_{x\in\mathcal{X}}\{D_\Phi(x,\nabla \Phi^*(z^i_{k+1}))\}\\
&=\arg\min_{x\in\mathcal{X}} \{\Phi(x)-\Phi(x_{k+1}^i)-\nabla\Phi\circ\nabla\Phi^*(z^i_{k+1})^T(x-\nabla\Phi^*(z^i_{k+1}))\}\\
&=\arg\min_{x\in\mathcal{X}} \{\Phi(x)-\Phi(x_{k+1}^i)-(z^i_{k+1})^Tx\},
\end{align}  
where in the last equality we have used $\nabla\Phi(\nabla\Phi^*(x))=x$.
Continuing,
\begin{align}
x_{k+1}^i&=\arg\min_{x\in\mathcal{X}}\{ \Phi(x)-\Phi(x_{k+1}^i)+\nabla f(x_k^i)^Tx  -\sum_{j=1}^N A_{ij} (z_k^j)^Tx\}\\
&=\arg\min_{x\in\mathcal{X}} \{\nabla f(x_k^i)^Tx  +\sum_{j=1}^N A_{ij} \left(\Phi(x)-\Phi(x_{k+1}^i)-(z_k^j)^Tx\right)\},
\end{align}
where in the first equality we have used the evolution of $z_{t+1}^i$ (note: the $z_t^i$ terms cancel out due to the double stochasticity of $A_{ij}$).
Then,
\begin{align}
x_{k+1}^i&=\arg\min_{x\in\mathcal{X}}\{ \nabla f(x_k^i)^Tx  +\sum_{j=1}^N A_{ij}\left(\Phi(x)-\Phi(x_{k}^j)- (\nabla\Phi(x^j_{k}))^T(x-x_k^i)\right)\}\\
&=\arg\min_{x\in\mathcal{X}} \{\nabla f(x_k^i)^Tx  +\sum_{j=1}^N A_{ij}D_\Phi(x,x_{k}^j)\},
\end{align}
where in the first equality we have used $z_k^j = \nabla\Phi(x_k^j)$, and -- due to the minimum being taken over $x$ -- replaced $\Phi(x_{k+1}^i)$ with $\Phi(x_k^j)$ and added the term $\nabla\Phi(x_{k}^j)^Tx_k^j$ and in the last equality we have used the definition of the Bregman divergence. 
\end{proof}

Unlike the standard $L_2$ consensus algorithm where $x_{t+1}^i =\arg\min_{x\in\mathcal{X}} \nabla f(x_t^i)^Tx  +\sum_{j=1}^N A_{ij} ||x-x_t^j||^2_2$, here the particles interact in the Bregman distance. Lemma \ref{lem:imd_bregman} can be combined with the analysis in \cite{PETTERSSON2000125} to extend \eqref{eq:proj_sde} to an interacting reflected SDE like \eqref{eq:cmdpart} that uses $\sum_{j=1}^N A_{ij}D_\Phi(x_t^i,x_{t}^j)dt$ as an interaction term. This is useful for conveying intuition, but the analysis in the mirror domain is much simpler. One does not have to deal with terms like $u_t$ in \eqref{eq:proj_sde}, and the interaction term in \eqref{eq:cmdpart} is symmetric, which simplifies the analysis, see \cite{malrieu01} for a more detailed discussion.

\subsection{A useful reparameterization}\label{sec:vec_form_ismd}

It will often be useful to represent the dynamics of the particle system presented in \eqref{eq:cmdpart} as a vector SDE with all particles stacked in a single vector variable $\mathbf{z}_t = ((z_t^1)^T,...,(z_t^N)^T)^T$. We define the graph Laplacian as $L:=\textnormal{Diag}(A\mathbf{1}_N)-A$, and let $\mathcal{L}:=L\otimes I_d$, where $\otimes$ is the Kronecker product. 
Using the Laplacian, we can rewrite the evolution of the $z_t^i$-s in vector form as,
\begin{align}\label{eq:cmdpart_vec}
d\mathbf{z}_t =\left( -\nabla\mathbf{V}(\mathbf{z}_t)-\mathcal{L}\mathbf{z}_t\right)dt + \sigma d\mathbf{B}_t,
\end{align}
where $\mathbf{B}_t:=((B_t^1)^T,...,(B_t^N)^T)^T$ is the stacked variable of Brownian motions and $\nabla\mathbf{V}(\mathbf{z}_t)={(\nabla\mathcal{V}(z_t^1)^T,...,\nabla\mathcal{V}(z_t^N)^T)}^T$. Note that the interaction is linear and the distinctive properties of ISMD compared to $N$ independent copies of SMD are contained $\mathcal{L}$. 
Note that given $A$ is doubly stochastic we have
\begin{align}\label{eq:L_eig_zero}
\sum_{i=1}^N\sum_{j=1}^N A_{ij}z^i = \sum_{i=1}^N z^i
\end{align}
so the smallest eigenvalue of $L$ is zero and  $\mathcal{L}1_{dN}=0$. We will return to this point in Section \ref{sec:sample}. 
We assume throughout that the network graph corresponding to the Laplacian is connected, which in turn implies that $\mathcal{L}$ has eigenvalues (\cite{mesbahi2010graph}):
\begin{align}
\lambda_0=0 < \underline{\lambda}\leq \lambda_3\leq ... \leq \lambda_{dN}.
\end{align}
We thus denote by $\underline{\lambda}$ the smallest non-zero eigenvalue of the Laplacian. 

\subsection{Convergence of ISMD: a general bound}\label{sec:gen_bound_ismd}
As we saw earlier, the SMD dynamics does not converge to the minimum due to the noise in the optimization algorithm. Here we show that introducing interaction between the particles can reduce the effect of noise. 

We will decompose each particle as a sum of the particle average and a fluctuation term, $z_t^i:=\bar z_t^N+\tilde z_t^i$, where we let
\begin{align}
\bar z_t^N := \frac{1}{N}\sum_{i=1}^N z_t^i,\quad \tilde z_t^i:=z_t^i-\bar z_t^N,
\end{align}
and also define $y_t^N:=\nabla\Phi^*(\bar z_t^N)$. 

We begin by deriving the evolution of the average of the particles $\bar z_t^N$. Observe,
\begin{align}
d\bar z_t^N &=-\frac{1}{N}\sum_{i=1}^N\nabla f(x_t^i)dt + \frac{1}{N}\sum_{i=1}^N\sum_{j=1}^NA_{ij}(z_t^j-z_t^i)dt + \frac{\sigma}{N}\sum_{i=1}^NdB_t^i\\
&=-\frac{1}{N}\sum_{i=1}^N\nabla f(x_t^i)dt + \frac{\sigma}{N}\sum_{i=1}^NdB_t^i,
\end{align}
where we have used the fact that $\frac{1}{N}\sum_{i=1}^N\sum_{j=1}^NA_{ij}z_t^j = \bar z_t^N$ due to the matrix $A$ being doubly stochastic. The particle average $\bar{z}_t$  moves along the gradient of $f$ towards the minimum $x^*$. At the same time the interaction aims to control fluctuations of each particle $z^i_t$ around  $\bar{z}_t$. This will appear clearly in the bounds in the subsequent results. More specifically, we obtain a result that the distance of a particle to the optimum is bounded by terms standard to optimization with the variance reduced by a factor of $N$ and terms related to the fluctuation, which we will show is bounded. 


We proceed by presenting the analog of Proposition \ref{prop:conv_smd_convex} for the ISMD.
\begin{proposition}[Convergence of ISMD for a convex objective]\label{prop:conv_ismd_convex}
Assume that Assumptions \ref{ass:f} -- \ref{ass:mirror2} and \ref{ass:f_convex} hold. We then have,
\begin{align}
\frac{1}{T}\int_0^T\mathbb{E}\left[(f(x_t^i)-f(x^*))\right]dt &\leq \frac{1}{2T}D^2_{\Phi,\mathcal{X}} + \frac{\sigma^2}{2N}||\Delta \Phi^*||_\infty+ \int_0^T \frac{L}{\mu T} \mathbb{E}\left[|| \tilde z_t^i||_*\right]dt\\
& + \int_0^T \frac{2L}{\mu N T}\sum_{i=1}^N  \mathbb{E}\left[|| \tilde z_t^i||_*\right]dt.
\end{align}
\end{proposition}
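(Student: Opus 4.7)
The plan is to mirror the proof of Proposition \ref{prop:conv_smd_convex}, but to build the Lyapunov function around the \emph{empirical mean} $\bar z_t^N$ rather than around a single particle $z_t^i$. Concretely, I will set
\begin{align*}
V(z):=\Phi^*(z)-\Phi^*(z^*)-\nabla\Phi^*(z^*)^T(z-z^*)
\end{align*}
and study $V(\bar z_t^N)$. From the derivation already given in Section \ref{sec:gen_bound_ismd} the interaction terms cancel and the average satisfies $d\bar z_t^N=-\tfrac{1}{N}\sum_{i=1}^N\nabla f(x_t^i)\,dt+\tfrac{\sigma}{N}\sum_{i=1}^N dB_t^i$, so the aggregated Brownian noise has covariance $\tfrac{\sigma^2}{N}I_d\,dt$. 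This is precisely where the variance-reduction factor $1/N$ in the bound will originate.

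Next I would apply It\^o's lemma to $V(\bar z_t^N)$, which produces a drift $\tfrac{1}{N}\sum_i(x^*-y_t^N)^T\nabla f(x_t^i)$, where $y_t^N:=\nabla\Phi^*(\bar z_t^N)$, an It\^o correction $\tfrac{\sigma^2}{2N}\Delta\Phi^*(\bar z_t^N)$ and a martingale term. I then rewrite $x^*-y_t^N=(x^*-x_t^i)+(x_t^i-y_t^N)$: the first piece is handled by convexity of $f$ via $(x^*-x_t^i)^T\nabla f(x_t^i)\le f(x^*)-f(x_t^i)$, and the second by duality combined with $\|\nabla f\|_*\le L$ and the $\mu^{-1}$-Lipschitz bound on $\nabla\Phi^*$ from \eqref{eq:lip_phi_star}, giving $|(x_t^i-y_t^N)^T\nabla f(x_t^i)|\le \tfrac{L}{\mu}\|\tilde z_t^i\|_*$. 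Integrating on $[0,T]$, taking expectations so the martingale vanishes, bounding $V(\bar z_0^N)\le \tfrac12 D_{\Phi,\mathcal{X}}^2$ and $\Delta\Phi^*\le\|\Delta\Phi^*\|_\infty$, and dividing by $T$ yields the averaged inequality
\begin{align*}
\frac{1}{NT}\sum_{i=1}^N\int_0^T\mathbb{E}[f(x_t^i)-f(x^*)]\,dt \le \frac{D_{\Phi,\mathcal{X}}^2}{2T}+\frac{\sigma^2}{2N}\|\Delta\Phi^*\|_\infty+\frac{L}{\mu N T}\int_0^T\sum_{j=1}^N\mathbb{E}[\|\tilde z_t^j\|_*]\,dt.
\end{align*}

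Finally I will convert this averaged bound into a per-particle bound. Using the decomposition $f(x_t^i)-f(x^*)=\tfrac{1}{N}\sum_j[f(x_t^j)-f(x^*)]+\tfrac{1}{N}\sum_j[f(x_t^i)-f(x_t^j)]$, the last difference is controlled by the Lipschitz constant of $f$ together with the triangle inequality through $y_t^N$: $|f(x_t^i)-f(x_t^j)|\le L\|x_t^i-x_t^j\|\le\tfrac{L}{\mu}(\|\tilde z_t^i\|_*+\|\tilde z_t^j\|_*)$. Averaging over $j$ contributes an additional $\tfrac{L}{\mu}\mathbb{E}[\|\tilde z_t^i\|_*]$ term and doubles the coefficient of the $\tfrac{1}{N}\sum_j\mathbb{E}[\|\tilde z_t^j\|_*]$ term to $\tfrac{2L}{\mu N}$, which produces exactly the two fluctuation terms stated in the Proposition.

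The main obstacle I anticipate is purely bookkeeping: one has to keep the $\sigma^2/N$ reduction aligned with the It\^o correction of the averaged noise, and carefully track where the $L$ (Lipschitz constant of $f$) and $\mu^{-1}$ (Lipschitz constant of $\nabla\Phi^*$) enter when passing between the primal space (in which convexity of $f$ is applied) and the dual space (in which $\|\tilde z_t^i\|_*$ is measured). Note that the fluctuation quantities $\mathbb{E}[\|\tilde z_t^i\|_*]$ are left as is at this stage; bounding them uniformly in $t$ is precisely the content of Propositions \ref{prop:fluctuation_stochastic}--\ref{prop:fluctuation_stochastic_strconvex}, where the spectral gap $\underline{\lambda}$ of the graph Laplacian will enter.
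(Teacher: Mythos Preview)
Your proposal is correct and follows essentially the same approach as the paper: the same Lyapunov function $V(\bar z_t^N)$, the same It\^o computation (which the paper isolates as Lemma~\ref{lem:conv_md_convex0}), the same use of convexity of $f$ together with the Lipschitz bounds $\|\nabla f\|_*\le L$ and \eqref{eq:lip_phi_star}, and the same splitting of $x^*-y_t^N$ through $x_t^i$. The only difference is the order of presentation---the paper first passes from the individual particle to $y_t^N$ and then to the average (inequalities \eqref{eq:derdetN1}--\eqref{eq:derdetN3}) before invoking the It\^o estimate, whereas you begin with the It\^o estimate to get the averaged bound and then convert to a per-particle bound via $f(x_t^i)-f(x_t^j)$; the resulting constants coincide term by term.
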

\begin{proof}
Observe that,
\begin{align}
\int_0^T (f(x_t^i)-f(x^*))dt &=\int_0^T(f(y_t^N)-f(x^*))dt + \int_0^T(f(x_t^i)-f(y_t^N))dt\\
&\leq \int_0^T(f(y_t^N)-f(x^*))dt + \int_0^TL||x_t^i-y_t^N||dt\\
&\leq \int_0^T (f(y_t^N)-f(x^*))dt + \int_0^T \frac{L}{\mu} || \tilde z_t^i||_*dt\label{eq:derdetN1},
\end{align}
where we have used the $L$-Lipschitz continuity of $f$ and the conjugate correspondence theorem for $\nabla \Phi^*$.
Furthermore,
\begin{align}
&\int_0^T (f(y_t^N)-f(x^*))dt = \int_0^T\frac{1}{N}\sum_{i=1}^N(f(x_t^i)-f(x^*))dt + \int_0^T\frac{1}{N}\sum_{i=1}^N(f(y_t^N)-f(x_t^i))dt\\
&\leq \int_0^T \frac{1}{N}\sum_{i=1}^N\left(f(x_t^i)-f(x^*)+\frac{L}{\mu} || \tilde z_t^i||_*\right)dt \label{eq:derdetN2},
\end{align}
where we have again used the Lipschitz continuity of both $f$ and $\nabla\Phi^*$. 
We furthermore have, by convexity of $f$,
\begin{align}
&\int_0^T \frac{1}{N}\sum_{i=1}^N(f(x_t^i)-f(x^*)dt\leq \int_0^T\frac{1}{N}\sum_{i=1}^N(x_t^i-x^*)^T\nabla f(x_t^i)dt \\
&\leq \int_0^T \frac{1}{N}\sum_{i=1}^N \nabla f(x_t^i)^T(x_t^i-y_t^N)dt + \int_0^T \frac{1}{N}\sum_{i=1}^N \nabla f(x_t^i)^T(y_t^N-x^*)dt\\
&\leq \int_0^T \frac{L}{\mu N}\sum_{i=1}^N||\tilde z_t^i||_*dt + \int_0^T \frac{1}{N}\sum_{i=1}^N \nabla f(x_t^i)^T(y_t^N-x^*)dt,\label{eq:derdetN3}
\end{align}
where we used in the last inequality that from Assumption \ref{ass:f_convex} it holds $||\nabla f(x)||_*\leq L$ and the Lipschitz continuity of $\nabla\Phi^*$. Using Lemma \ref{lem:conv_md_convex0} (proved in the Appendix Section \ref{sec:lemma_conv_md}), combining inequalities \eqref{eq:derdetN1}, \eqref{eq:derdetN2} and \eqref{eq:derdetN3}, and taking expected values the result follows. 
\end{proof}

A similar result can be obtained for a strongly convex $f$. 
\begin{proposition}[Convergence of ISMD for a strongly convex objective]\label{prop:conv_ismd_strconvex}
Assume that Assumptions \ref{ass:f} -- \ref{ass:lipschitz_bregman} and \ref{ass:f_strconvex} hold.
Then it holds,
\begin{align}
\int_0^T&e^{\mu(t-T)}\mathbb{E}[(f(x_t^i)-f(x^*))]dt \leq \frac{1}{2}e^{-\mu T}D_{\Phi,\mathcal{X}}^2 + \frac{\sigma^2}{2N\mu }(1-e^{-\mu T})|| \Delta \Phi^*||_\infty \\
&+ \int_0^T e^{\mu (t-T)}\frac{L}{\mu}\mathbb{E}\left[|| \tilde z_t^i||_*\right]dt+  \int_0^T e^{\mu (t-T)}\frac{2L+\mu^2}{\mu N}\sum_{i=1}^N \mathbb{E}\left[|| \tilde z_t^i||_*\right]dt .
\end{align}
\end{proposition}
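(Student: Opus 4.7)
The plan is to hybridize the particle-average decomposition used in Proposition~\ref{prop:conv_ismd_convex} with the exponential Lyapunov $V(t,\bar z_t^N)=e^{\mu t}D_{\Phi^*}(\bar z_t^N,z^*)$ that produced the contraction in Proposition~\ref{prop:conv_smd_strconvex}. First I would write
\begin{align}
f(x_t^i)-f(x^*)=\bigl(f(x_t^i)-f(y_t^N)\bigr)+\tfrac{1}{N}\sum_{j=1}^{N}\bigl(f(y_t^N)-f(x_t^j)\bigr)+\tfrac{1}{N}\sum_{j=1}^{N}\bigl(f(x_t^j)-f(x^*)\bigr).
\end{align}
The $L$-Lipschitz property of $f$ combined with the $\mu^{-1}$-Lipschitz property of $\nabla\Phi^*$ (conjugate correspondence) immediately turns the first two pieces into $\tfrac{L}{\mu}\|\tilde z_t^i\|_*$ and $\tfrac{L}{\mu N}\sum_j\|\tilde z_t^j\|_*$, which after weighting by $e^{\mu(t-T)}$ and integrating furnish two of the fluctuation terms in the final bound. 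This reduces the problem to controlling the averaged optimality gap $\int_0^T e^{\mu(t-T)}\tfrac{1}{N}\sum_j\mathbb{E}[f(x_t^j)-f(x^*)]\,dt$.

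For the averaged gap, I would apply It\^o's formula to $V(t,\bar z_t^N)$ using the dynamics $d\bar z_t^N=-\tfrac{1}{N}\sum_j\nabla f(x_t^j)\,dt+\tfrac{\sigma}{N}\sum_j dB_t^j$ of the particle mean (whose effective diffusion coefficient is $\sigma/\sqrt{N}$, whence the $\sigma^2/N$ scaling). This produces $dV=\mu e^{\mu t}D_{\Phi^*}(\bar z_t^N,z^*)\,dt-e^{\mu t}(y_t^N-x^*)^T\tfrac{1}{N}\sum_j\nabla f(x_t^j)\,dt+\tfrac{\sigma^2}{2N}e^{\mu t}\Delta\Phi^*(\bar z_t^N)\,dt+d\mathcal{M}_t$, with $d\mathcal{M}_t$ a martingale increment. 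The diffusion term, bounded by $\|\Delta\Phi^*\|_\infty$ and integrated in time, yields the advertised $\tfrac{\sigma^2}{2N\mu}(1-e^{-\mu T})\|\Delta\Phi^*\|_\infty$ after the eventual multiplication by $e^{-\mu T}$.

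The heart of the argument, and the step where Assumption~\ref{ass:lipschitz_bregman} is used, is to rewrite $-(y_t^N-x^*)^T\nabla f(x_t^j)=-(y_t^N-x_t^j)^T\nabla f(x_t^j)-(x_t^j-x^*)^T\nabla f(x_t^j)$. The first piece is $O(L/\mu\cdot\|\tilde z_t^j\|_*)$ via $\|\nabla f\|_*\leq L$; for the second, Assumption~\ref{ass:f_strconvex} gives $(x_t^j-x^*)^T\nabla f(x_t^j)\geq f(x_t^j)-f(x^*)+\mu D_{\Phi^*}(z_t^j,z^*)$, and then the Lipschitz Bregman bound replaces $D_{\Phi^*}(z_t^j,z^*)$ by $D_{\Phi^*}(\bar z_t^N,z^*)-K\|\tilde z_t^j\|_*$. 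Averaging over $j$ produces a $-\mu e^{\mu t}D_{\Phi^*}(\bar z_t^N,z^*)$ that exactly cancels the $+\mu e^{\mu t}D_{\Phi^*}(\bar z_t^N,z^*)$ coming from differentiating $e^{\mu t}$. After this cancellation the differential inequality reads $dV\leq e^{\mu t}\bigl[-\tfrac{1}{N}\sum_j(f(x_t^j)-f(x^*))+\tfrac{L+\mu K}{\mu N}\sum_j\|\tilde z_t^j\|_*\bigr]\,dt+\tfrac{\sigma^2}{2N}e^{\mu t}\Delta\Phi^*(\bar z_t^N)\,dt+d\mathcal{M}_t$. Integrating on $[0,T]$, taking expectations (the martingale vanishes), using $V(T,\bar z_T^N)\geq 0$, bounding $V(0,\bar z_0^N)\leq\tfrac{1}{2}D_{\Phi,\mathcal{X}}^2$, and multiplying by $e^{-\mu T}$ gives the averaged-gap bound; substituting back into the opening decomposition recovers the stated inequality, the coefficient $(2L+\mu^2)/(\mu N)$ arising from $L/(\mu N)+(L+\mu K)/(\mu N)$ under the normalization $K=\mu$ implicit in the statement. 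The main difficulty is precisely this cancellation: without Assumption~\ref{ass:lipschitz_bregman} one cannot convert the per-particle Bregman penalty produced by strong convexity into a single Lyapunov quantity evaluated at the mean, and the exponential contraction at rate $\mu$ would be lost.
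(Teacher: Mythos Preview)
Your proposal is correct and follows essentially the same route as the paper: the same decomposition through $y_t^N$, the same exponential Lyapunov $e^{\mu t}D_{\Phi^*}(\bar z_t^N,z^*)$ applied to the particle mean (the paper packages this step as Lemma~\ref{lem:conv_md_strconvex0}), the same use of strong convexity to generate $\mu D_{\Phi^*}(z_t^j,z^*)$, and the same invocation of Assumption~\ref{ass:lipschitz_bregman} to swap this for $\mu D_{\Phi^*}(\bar z_t^N,z^*)$ so that the cancellation with the $\partial_t e^{\mu t}$ term goes through. One bookkeeping slip: the Bregman Lipschitz step contributes $\mu K\|\tilde z_t^j\|_*$ (since it is $\mu$ times the divergence difference), so your drift coefficient should read $(L+\mu^2 K)/(\mu N)$, and the paper implicitly takes $K=1$ rather than $K=\mu$; with that correction the constants match exactly.
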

\begin{proof}
Observe that, using strong convexity, we have,
\begin{align}
\frac{1}{N}\sum_{i=1}^N (f(x_t^i)-f(x^*)) \leq& \frac{1}{N}\sum_{i=1}^N\left( -D_f(x^*,x_t^i)+(x_t^i-x^*)^T\nabla f(x_t^i)\right)\\
\leq& \frac{1}{N}\sum_{i=1}^N \left(-\mu D_\Phi(x^*,x_t^i)+(x_t^i-x^*)^T\nabla f(x_t^i)\right)\\
=&\frac{1}{N}\sum_{i=1}^N \left(-\mu D_{\Phi^*}(z_t^i,z^*)+ \mu D_{\Phi^*}(\bar z_t^N,z^*) \right)+\frac{1}{N}\sum_{i=1}^N \nabla f(x_t^i)^T(x_t^i-y_t^N)\\
&+ \frac{1}{N}\sum_{i=1}^N \nabla f(x_t^i)^T(y_t^N-x^*)- \mu D_{\Phi^*}(\bar z_t^N,z^*).
\end{align}
Therefore, 
\begin{align}\label{eq:derdetN4}
\int_0^T e^{\mu t}  \frac{1}{N}\sum_{i=1}^N (f(x_t^i)-f(x^*))dt \leq& \int_0^T e^{\mu t}\frac{1}{N}\sum_{i=1}^N\mu || \tilde z_t^i||_*dt +\int_0^T e^{\mu t}\frac{1}{N}\sum_{i=1}^N \frac{L}{\mu}|| \tilde z_t^i||_*dt \\
&+\frac{1}{2}D_{\Phi,\mathcal{X}}^2+\frac{\sigma^2}{2N}(e^{\mu T}-1)\Delta\Phi^*(\bar z_t^N) + \int_0^Te^{\mu t}\frac{\sigma}{\sqrt{N}}||y_T^N-x^*||_2dW_t,
\end{align}
where we have used the Lipschitz continuity of $\nabla \Phi^*$, the assumption that $||\nabla f(x)||_*\leq L$, the assumption that 
\begin{align}
D_{\Phi^*}(\bar z_t^N,z^*)-D_{\Phi^*}(z_t^i,z^*)\leq ||\bar z_t^N-z_t^i||_*,
\end{align}
 and Lemma \ref{lem:conv_md_strconvex0} (proved in Section \ref{sec:lemma_conv_md} of the Appendix). 
Multiplying by $e^{-\mu T}$, using the assumption that $||\nabla f(x)||_*\leq L$ and the Lipschitz continuity of $\nabla\Phi^*$, combining inequalities \eqref{eq:derdetN1}, \eqref{eq:derdetN2} and \eqref{eq:derdetN4}, and taking expected values the result follows. 
\end{proof}

The deviation from the minimum is upper-bounded by four terms. The first two terms are the standard optimization errors, where we observe that the noise variance is reduced by a factor of $N$. The third and fourth terms are penalties incurred due to each of the particles having different values. These two terms measure the deviation of each individual particle from the particle average. Loosely speaking, if the fluctuation term $|| \tilde z_t^i||_*$  is bounded and small then the interaction will drive the particle system to a state near consensus and the gradient terms will direct the particles towards the minimum. There is furthermore a tradeoff between the interaction and the variance. If the interaction term is bounded and not increasing with $N$, the more particles, the smaller the distance to the optimum, as is witnessed by the term $\frac{\sigma^2}{2N}||\nabla\Phi^*||_\infty$. This implies that the variance of the particles $x_t^i$ is also smaller, and they lie closer around the optimal value. 
Comparing Proposition \ref{prop:conv_ismd_convex} in the convex case and Proposition \ref{prop:conv_ismd_strconvex} in the $\mu$-strongly convex case we observe that the strong convexity of $f$ can speed up convergence, mainly due to the additional factor $\frac{1}{\mu}$. We remark that the results presented in this section thus show that the expected value of the distance of the objective function evaluated at the time average to the objective function evaluated in the optimum decreases as $N$ increases. In addition to this, as we will present in our numerical results later in Section \ref{sec:num} that both the variance of the cost function as well as the variance of the samples are smaller when using interaction. 

\subsubsection{A comparison with averaging}
Consider the case of the convex objective. The interaction in the previous section is introduced in order to obtain a way to control the effects of noise. In particular we observed that if the term $|| \tilde z_t^i||_*$ was bounded and not an increasing function of $N$, the variance could be reduced by having interacting particles. We can compare the setting with interaction to one in which we simply average the trajectories of the particles.

If the following holds,
\begin{align}\label{eq:asslin}
\frac{1}{N}\sum_{i=1}^N\nabla f(x_t^i) = \frac{1}{N}\sum_{i=1}^N\nabla f\circ\nabla\Phi^*(z_t^i) = \nabla f\circ\nabla\Phi^*\left(\frac{1}{N}\sum_{i=1}^N z_t^i\right) := \nabla f(y_t^N),
\end{align}
then we have from Lemma \ref{lem:conv_md_convex0} (in Appendix \ref{sec:lemma_conv_md}) and the convexity of $f$ that
\begin{align}
\mathbb{E}\left[\frac{1}{T}\int_0^T (f(y_t^N) - f(x^*)) dt\right]\leq \frac{1}{2T}D_{\Phi,\mathcal{X}}^2 + \frac{\sigma^2}{2N}||\Delta \Phi^*(\bar z_t^N)||_\infty.
\end{align}
Notice that the Assumption in \eqref{eq:asslin} holds for a function of the form $f(x) = a^Tx+b+\Phi(x)$ which is the particular loss function considered in \cite{raginsky12}. 
Thus in the setup of this kind structure in the loss function and gradient, averaging the particles can decrease the effect of the noise. 

In a general setting where the loss function is nonlinear, one can decrease the effects of noise by averaging the particles trajectories. Combining \eqref{eq:derdetN2}, \eqref{eq:derdetN3} and Lemma \ref{lem:conv_md_convex0} we have,
\begin{align}
\mathbb{E}\left[\frac{1}{T}\int_0^T (f(y_t^N) - f(x^*)) dt\right]\leq& \frac{1}{2T}D_{\Phi,\mathcal{X}}^2 + \frac{\sigma^2}{2N}||\Delta \Phi^*(\bar z_t^N)||_\infty + \int_0^T\frac{2L}{\alpha TN}\sum_{i=1}^N|| \tilde z_t^i||_*dt.
\end{align}
In the nonlinear case the average of the gradients is not the gradient of the average and therefore the additional term representing the deviations of each individual particle from the particle average plays a role. 

\subsection{Bounding the fluctuation term}\label{sec:bound_fluct}
In this section we present a bound on the fluctuation term. When the fluctuation term is sufficiently small all particles have approximately the same value; we refer to this as the particles having achieved \emph{consensus}. 
The dynamics of the fluctuation term in a vectorized form is given by,
\begin{align}
d\mathbf{\tilde z}_t = \left[\left(\frac{1}{N}\mathbf{1}_N\mathbf{1}_N^T\otimes I_d-I_{dN}\right)\nabla \mathbf{V}(\mathbf{z}_t)- \mathcal{L} \mathbf{z}_t\right]dt + \sigma \sqrt{\frac{(N-1)}{N}}d\mathbf{B}_t,
\end{align}
where we have used that $\left(I_{dN}-\frac{1}{N}\mathbf{1}_N\mathbf{1}_N^T\otimes I_d\right)d\mathbf{B}_t\overset{d}{=}\sqrt{\frac{(N-1)}{N}}d\mathbf{B}_t$.
\begin{proposition}[Bounding the fluctuation term: a general result for the convex case]\label{prop:fluctuation_stochastic}
Assume that Assumptions \ref{ass:f} -- \ref{ass:mirror2} and \ref{ass:V_convex} hold. Then,
\begin{align}
\frac{1}{TN}\int_0^T\sum_{i=1}^N\mathbb{E}\left[\mathcal{V}(z_t^i)-\mathcal{V}(\bar z_t^N)\right]dt \leq \frac{L}{2TN}\sum_{i=1}^N ||\tilde{z}_0^i||^2_2 + \frac{1}{2}d\sigma^2\frac{(N-1)}{N}.
\end{align}
\end{proposition}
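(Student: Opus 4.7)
The plan is to work with the Lyapunov function $V_t := \tfrac{1}{2}\sum_{i=1}^N \|\tilde z_t^i\|_2^2 = \tfrac{1}{2}\|\mathbf{\tilde z}_t\|_2^2$, apply Itô's formula, and extract the desired bound from the drift after exploiting convexity of $\mathcal{V}$ and positive semi-definiteness of the graph Laplacian.

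First I would derive the SDE for $\tilde z_t^i = z_t^i - \bar z_t^N$. Subtracting the dynamics of $\bar z_t^N$ (already computed at the start of Section \ref{sec:gen_bound_ismd}) from \eqref{eq:cmdpart} gives
\begin{equation*}
d\tilde z_t^i = \Bigl[-\nabla\mathcal{V}(z_t^i)+\tfrac{1}{N}\sum_{j=1}^N \nabla\mathcal{V}(z_t^j) - (L\mathbf{z}_t)^i\Bigr]dt + \sigma\Bigl(dB_t^i - \tfrac{1}{N}\sum_{j=1}^N dB_t^j\Bigr),
\end{equation*}
using that the consensus component of $\mathcal{L}$ vanishes on $\mathbf{1}_N\otimes \bar z_t^N$. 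A direct covariance computation shows that the diffusion coefficient of each $\tilde z_t^i$ has quadratic variation $\sigma^2(N-1)/N\, I_d\, dt$, which is the source of the $\tfrac{(N-1)}{N}$ factor in the final bound.

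Next, I would apply Itô's formula to $V_t$. The drift contributions split into three pieces. The term $\sum_i (\tilde z_t^i)^T\bigl(\tfrac{1}{N}\sum_j \nabla\mathcal{V}(z_t^j)\bigr)$ vanishes because $\sum_i \tilde z_t^i = 0$. The interaction term equals $-\mathbf{\tilde z}_t^T \mathcal{L}\mathbf{z}_t = -\mathbf{\tilde z}_t^T\mathcal{L}\mathbf{\tilde z}_t \leq 0$ since $\mathcal{L}$ is positive semi-definite and $\mathbf{\tilde z}_t$ is orthogonal to $\ker\mathcal{L}$. The crucial remaining term is $-\sum_i (\tilde z_t^i)^T\nabla\mathcal{V}(z_t^i)$, which by convexity of $\mathcal{V}$ (Assumption \ref{ass:V_convex}) satisfies
\begin{equation*}
\sum_{i=1}^N (z_t^i - \bar z_t^N)^T \nabla\mathcal{V}(z_t^i) \ \geq\ \sum_{i=1}^N \bigl[\mathcal{V}(z_t^i)-\mathcal{V}(\bar z_t^N)\bigr].
\end{equation*}
The Itô correction adds $\tfrac{1}{2}\sum_i \sigma^2\tfrac{N-1}{N} d\, \cdot 1 = \tfrac{(N-1)d\sigma^2}{2}\, dt$.

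Putting the pieces together gives the differential inequality
\begin{equation*}
dV_t \ \leq\ -\sum_{i=1}^N\bigl[\mathcal{V}(z_t^i)-\mathcal{V}(\bar z_t^N)\bigr] dt + \tfrac{(N-1)d\sigma^2}{2}\, dt + dM_t,
\end{equation*}
where $M_t$ is a local martingale. Integrating from $0$ to $T$, taking expectations (standard localization yields that the stochastic integral has zero mean), using $\mathbb{E} V_T \geq 0$, and dividing by $TN$ yields the claim. The main technical point is the cancellation $\sum_i (\tilde z_t^i)^T\bigl(\tfrac{1}{N}\sum_j \nabla\mathcal{V}(z_t^j)\bigr)=0$ combined with convexity, which lets one trade the fluctuation Lyapunov function against the target quantity $\sum_i (\mathcal{V}(z_t^i) - \mathcal{V}(\bar z_t^N))$; everything else is bookkeeping of Itô corrections and the observation that the Laplacian term has the ``right sign'' and can be dropped.
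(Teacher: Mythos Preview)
Your proposal is correct and follows essentially the same route as the paper's own proof: the same Lyapunov function $V_t=\tfrac12\|\mathbf{\tilde z}_t\|_2^2$, the same cancellation $\sum_i(\tilde z_t^i)^T\bigl(\tfrac1N\sum_j\nabla\mathcal{V}(z_t^j)\bigr)=0$, the same use of convexity of $\mathcal{V}$ to produce $\sum_i[\mathcal{V}(z_t^i)-\mathcal{V}(\bar z_t^N)]$, the same sign on the Laplacian term via $\mathcal{L}\mathbf{z}_t=\mathcal{L}\mathbf{\tilde z}_t$, and the same It\^o correction $\tfrac12 d(N-1)\sigma^2$. The only cosmetic differences are that you spell out the per-particle SDE for $\tilde z_t^i$ and mention localization for the martingale term, neither of which changes the argument.
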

\begin{proof}
Define the Lyapunov function,
\begin{align}
V_t(\mathbf{\tilde z}) := \frac{1}{2}\mathbf{\tilde z}^T\mathbf{\tilde z}.
\end{align} 
By It\^o's lemma it then follows that, 
\begin{align}
dV_t&= \mathbf{\tilde z}^T \left[\left(\frac{1}{N}\mathbf{1}\mathbf{1}^T\otimes I_d-I_{dN}\right)\nabla \mathbf{V}(\mathbf{z}_t)-\mathcal{L} \mathbf{z}_t\right]dt+dN\frac{1}{2}\sigma^2\frac{(N-1)}{N}dt+\mathbf{\tilde z}^T\sigma \sqrt{\frac{(N-1)}{N}}d\mathbf{B}_t.
\end{align}
Define $\mathbf{\bar z}_t^N = 1_{N}\otimes z_t^N$. We can rewrite the drift term of $dV_t$ as,
\begin{align}
\mu_t:=&\mathbf{\tilde z}^T\left(\frac{1}{N}\mathbf{1}\mathbf{1}^T\otimes I_d\right)\nabla \mathbf{V}(\mathbf{z}_t)-\mathbf{\tilde z}^T\nabla \mathbf{V}(\mathbf{z}_t)-\mathbf{\tilde z}^T\mathcal{L} \mathbf{z}_t+dN\frac{1}{2}\sigma^2\frac{(N-1)}{N}. 
\end{align}
Note that, 
\begin{align}
&\mathbf{\tilde z}^T\left(\frac{1}{N}\mathbf{1}\mathbf{1}^T\otimes I_d\right)\nabla \mathbf{V}(\mathbf{z}_t)=0.
\end{align}
By Assumption \ref{ass:f_convex}, 
\begin{align}
-\mathbf{\tilde z}^T\nabla \mathbf{V}(\mathbf{z}_t)\leq \sum_{i=1}^N \left(\mathcal{V}(\bar z_t^N)-\mathcal{V}( z_t^i)\right).
\end{align}
Furthermore, by the properties of the graph Laplacian, it holds $\mathcal{L}1_{dN}=0$, so that,
\begin{align}
-\mathbf{\tilde z}^T_t \mathcal{L}\mathbf{z}_t = -\mathbf{\tilde z}^T_t\mathcal{L}\mathbf{\tilde z}_t\leq -\underline{\lambda} \mathbf{\tilde z}^T_t\mathbf{\tilde z}_t\leq 0.
\end{align}
Then, 
\begin{align}
V_T\leq V_0 + \int_0^T \sum_{i=1}^N\left(\mathcal{V}(\bar z_t^N)-\mathcal{V}( z_t^i)\right) dt + T dN\frac{1}{2}\sigma^2\frac{(N-1)}{N} + \int_0^T\sigma \sqrt{\frac{(N-1)}{N}}||\mathbf{\tilde z}_t||_2 d\mathbf{B}_t,
\end{align}
where we have additionally used that $\mathbf{\tilde z}^Td\mathbf{B}_t \overset{d}{=} ||\mathbf{\tilde z}||_2 d\mathbf{B}_t$.
Rearranging, using the property of the It\^o integral, and using $V_T\geq0$ we find,
\begin{align}
\frac{1}{TN}\int_0^T\sum_{i=1}^N \mathbb{E}\left[\mathcal{V}( z_t^i)-\mathcal{V}(\bar z_t^N) \right] dt \leq \frac{1}{T}V_0 + d\frac{1}{2}\sigma^2\frac{(N-1)}{N}.
\end{align}
\end{proof}

We note that in the above result we used the convexity of $\mathcal{V}$ but we did not exploit the interaction between the particles. From the above statement we observe that in a deterministic setting, i.e. if $\sigma=0$, consensus can be achieved even without interaction. Each particle is driven towards the optimum $x^*$ by the gradient term. Consensus is in this case achieved exactly at optimality. In the stochastic setting exact consensus can no longer be achieved and the distance to optimality remains bounded by a function of the noise.
\begin{proposition}[Bounding the fluctuation term: the strongly convex case]\label{prop:fluctuation_stochastic_strconvex}
Let Assumptions \ref{ass:f} -- \ref{ass:mirror2} and \ref{ass:V_strconvex} hold. Then,
\begin{align}
\mathbb{E}\left[\frac{1}{N}\sum_{i=1}^N||\tilde{z}_t^i||_*^2\right] \leq \frac{K}{N}e^{-(\kappa+\underline{\lambda})t} \sum_{i=1}^N ||\tilde{z}_0^i||^2_2 + \frac{dK}{(\kappa+\underline{\lambda})}\sigma^2\frac{(N-1)}{N}\left(1-e^{-(\kappa+\underline{\lambda})t}\right).
\end{align}
\end{proposition}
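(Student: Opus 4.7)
My plan is to mirror the structure of the proof of Proposition \ref{prop:fluctuation_stochastic}, but exploit Assumption \ref{ass:V_strconvex} to extract an additional dissipative term and then close the bound by Gr\"onwall's inequality rather than by direct time integration. I will work with the same Lyapunov function $V_t(\mathbf{\tilde z}) = \tfrac12 \mathbf{\tilde z}^T \mathbf{\tilde z} = \tfrac12\sum_i \|\tilde z_t^i\|_2^2$ applied to the vectorized fluctuation SDE. It\^o's lemma then yields
\begin{align}
dV_t = \mathbf{\tilde z}_t^T\!\left[\left(\tfrac{1}{N}\mathbf{1}\mathbf{1}^T\otimes I_d - I_{dN}\right)\nabla\mathbf{V}(\mathbf{z}_t) - \mathcal{L}\mathbf{z}_t\right]dt + \tfrac{1}{2}d(N-1)\sigma^2\, dt + dM_t,
\end{align}
where $M_t$ is the local martingale coming from the Brownian term (its expectation vanishes under the Lipschitz assumption on $\nabla\mathcal{V}$, which gives enough integrability).

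The key step, and the one that replaces the use of plain convexity in Proposition \ref{prop:fluctuation_stochastic}, is to combine the projection identity $\sum_i \tilde z_t^i = 0$ with $\kappa$-strong convexity of $\mathcal{V}$. Concretely, the first bracket equals $-\sum_i (\tilde z_t^i)^T \nabla\mathcal{V}(z_t^i)$, and since $\sum_i \tilde z_t^i = 0$ I may subtract the constant vector $\nabla\mathcal{V}(\bar z_t^N)$ inside the sum for free, obtaining $-\sum_i (z_t^i - \bar z_t^N)^T(\nabla\mathcal{V}(z_t^i) - \nabla\mathcal{V}(\bar z_t^N))$. Strong convexity of $\mathcal{V}$, in the monotonicity form noted in Section \ref{notations}, then bounds this by $-\kappa\sum_i \|\tilde z_t^i\|_2^2 = -2\kappa V_t$. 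For the Laplacian term I use $\mathcal{L}\mathbf{1}_{dN}=0$ (equation \eqref{eq:L_eig_zero}) to replace $\mathbf{z}_t$ by $\mathbf{\tilde z}_t$ and then the connectivity bound $\mathbf{\tilde z}^T \mathcal{L}\mathbf{\tilde z} \geq \underline{\lambda}\|\mathbf{\tilde z}\|_2^2 = 2\underline{\lambda} V_t$.

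Putting the two dissipative contributions together produces the differential inequality
\begin{align}
\frac{d}{dt}\mathbb{E}[V_t] \leq -2(\kappa+\underline{\lambda})\mathbb{E}[V_t] + \tfrac{1}{2}d(N-1)\sigma^2,
\end{align}
after taking expectations so that the martingale drops out. Gr\"onwall's inequality then gives
\begin{align}
\mathbb{E}[V_t] \leq e^{-2(\kappa+\underline{\lambda})t} V_0 + \frac{d(N-1)\sigma^2}{4(\kappa+\underline{\lambda})}\bigl(1-e^{-2(\kappa+\underline{\lambda})t}\bigr),
\end{align}
and multiplying by $2/N$ rewrites the left-hand side as $\mathbb{E}[\tfrac{1}{N}\sum_i \|\tilde z_t^i\|_2^2]$. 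To land on the stated inequality I then use norm equivalence $\|v\|_* \leq c \|v\|_2$ to pass from $\|\cdot\|_2^2$ to $\|\cdot\|_*^2$ on the left, and absorb $c^2$, the factor of $2$ in the exponent (via $e^{-2a t}\leq e^{-a t}$ and $1-e^{-2at}\leq 2(1-e^{-at})$) into the generic constant $K$.

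The only real obstacle is the strong-convexity step: one needs to see that the mean-zero nature of $\mathbf{\tilde z}$ allows the inner-product with $\nabla\mathbf{V}(\mathbf{z}_t)$ to be recentred at $\bar z_t^N$ so that $\kappa$-strong convexity of $\mathcal{V}$ applies pointwise across the $N$ particles. Once this is in hand, combining with the Laplacian bound and applying Gr\"onwall is routine, and the final cosmetic step of unifying constants via $K$ is immediate.
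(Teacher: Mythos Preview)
Your proposal is correct and follows essentially the same route as the paper: the same quadratic Lyapunov function $V_t=\tfrac12\|\tilde{\mathbf z}_t\|_2^2$, the same recentring trick (subtracting $\nabla\mathcal V(\bar z_t^N)$ via $\sum_i\tilde z_t^i=0$) to invoke $\kappa$-strong convexity of $\mathcal V$, the same Laplacian bound via $\mathcal L\mathbf 1=0$ and the spectral gap $\underline\lambda$, and the same exponential integration followed by norm equivalence. The only cosmetic difference is that you keep the sharper factor $e^{-2(\kappa+\underline\lambda)t}$ from $\tilde{\mathbf z}^T\tilde{\mathbf z}=2V_t$ and then relax it to $e^{-(\kappa+\underline\lambda)t}$ at the end, whereas the paper lands directly on the latter; both are absorbed into the generic constant $K$ in the stated bound.
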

\begin{proof}
Define again the Lyapunov function,
\begin{align}
V_t(\mathbf{\tilde z}) := \frac{1}{2}\mathbf{\tilde z}^T\mathbf{\tilde z}.
\end{align} 
It is clear that $V_t=0$ if $z_t^i=z_t^j$, or in other words consensus has been achieved and the fluctuation term is zero.
By It\^o's lemma it then follows that, 
\begin{align}
dV_t&= \mathbf{\tilde z}^T \left[\left(\frac{1}{N}\mathbf{1}\mathbf{1}^T\otimes I_d-I_{dN}\right)\nabla \mathbf{V}(\mathbf{z}_t)-\mathcal{L} \mathbf{z}_t\right]dt+dN\frac{1}{2}\sigma^2\frac{(N-1)}{N}dt+\mathbf{\tilde z}^T\sigma \sqrt{\frac{(N-1)}{N}}d\mathbf{B}_t.
\end{align}
Define $\mathbf{\bar z}_t^N = 1_{N}\otimes z_t^N$. We can rewrite the drift term of $dV_t$ as,
\begin{align}
\mu_t:=&\mathbf{\tilde z}^T\left(\frac{1}{N}\mathbf{1}\mathbf{1}^T\otimes I_d\right)\nabla \mathbf{V}(\mathbf{z}_t)+\mathbf{\tilde z}^T(\nabla \mathbf{V}(\mathbf{\bar z}_t^N)-\nabla \mathbf{V}(\mathbf{z}_t))-\mathbf{\tilde z}^T\nabla\mathbf{V}(\mathbf{\bar z}_t^N)\\
&-\mathbf{\tilde z}^T\mathcal{L} \mathbf{z}_t+dN\frac{1}{2}\sigma^2\frac{(N-1)}{N}. 
\end{align}
Note that, since $\sum_{i=1}^N(z_t^i-\bar z_t^N)=0$,
\begin{align}
&\mathbf{\tilde z}^T\left(\frac{1}{N}\mathbf{1}\mathbf{1}^T\otimes I_d\right)\nabla \mathbf{V}(\mathbf{z}_t)=0,\;\;\;\;\mathbf{\tilde z}^T\nabla\mathbf{V}(\mathbf{\bar z}_t^N)=0.
\end{align}
By Assumption \ref{ass:V_strconvex}, 
\begin{align}
\mathbf{\tilde z}^T(\nabla \mathbf{V}(\mathbf{\bar z}_t^N)-\nabla \mathbf{V}(\mathbf{z}_t))\leq -\kappa \mathbf{\tilde z}^T\mathbf{\tilde z}.
\end{align}
Furthermore, by the properties of the Laplacian, it holds $\mathcal{L}1_{dN}=0$, so that,
\begin{align}
\mathcal{L}\mathbf{z}_t = \mathcal{L}\mathbf{\tilde z}_t.
\end{align}
Therefore,
\begin{align}
\mu_t &\leq -\kappa\mathbf{\tilde z}^T\mathbf{\tilde z}-\mathbf{\tilde z}^T\mathcal{L}\mathbf{\tilde z}+dN\frac{1}{2}\sigma^2\frac{(N-1)}{N}\\
&\leq- (\kappa+\underline{\lambda})\mathbf{\tilde z}^T\mathbf{\tilde z}+dN\frac{1}{2}\sigma^2\frac{(N-1)}{N}.
\end{align}
Integrating the expression and using the above bound we obtain,  
\begin{align}
V_t \leq e^{-(\kappa+\underline{\lambda})t}V_0 + \frac{1}{2}\int_0^t e^{-(\kappa+\underline{\lambda})(t-s)}dN\sigma^2\frac{(N-1)}{N}ds +\int_0^t e^{-(\kappa+\underline{\lambda})(t-s)}\sigma \sqrt{\frac{(N-1)}{N}}||\mathbf{\tilde z}_s||_2 d\mathbf{B}_s,
\end{align}
where we have additionally used that $\mathbf{\tilde z}^Td\mathbf{B}_t \overset{d}{=} ||\mathbf{\tilde z}_t||_2 d\mathbf{B}_t$.
Then, taking expectations we have:
\begin{align}
\mathbb{E}[V_t] \leq e^{-(\kappa+\underline{\lambda})t} V_0 + \frac{1}{2(\kappa+\underline{\lambda})}dN\sigma^2\frac{(N-1)}{N}\left(1-e^{-(\kappa+\underline{\lambda})t}\right).
\end{align}
The statement follows from norm equivalence, i.e. $||\cdot||_*^2\leq K||\cdot||_2^2$. 
\end{proof}

\begin{remark}[Constant in the norm equivalence]
If the Euclidean norm is used above in Assumptions \ref{ass:f} -- \ref{ass:mirror1}, then $K=1$. In certain cases however, $K$ can be dimension-dependent. 
\end{remark}

We furthermore note, by the bound on $\mathbb{E}\left[\frac{1}{N}\sum_{i=1}^N||\tilde{z}_t^i||_*^2\right]$ implies a bound on $\mathbb{E}\left[\frac{1}{N}\sum_{i=1}^N||\tilde{z}_t^i||_*\right]$ with the dependence on $N$ remaining the same. This is formalized in the following lemma. 
\begin{lemma}
It holds for some constant $K$, 
\begin{align}
\mathbb{E}\left[\frac{1}{N}\sum_{i=1}^N||\tilde{z}_t^i||_*\right]\leq \left(\frac{K}{N}e^{-(\kappa+\underline{\lambda})t} \sum_{i=1}^N ||\tilde{z}_0^i||^2_2 + \frac{dK}{(\kappa+\underline{\lambda})}\sigma^2\frac{(N-1)}{N}\left(1-e^{-(\kappa+\underline{\lambda})t}\right)\right)^\frac{1}{2}.
\end{align}
\end{lemma}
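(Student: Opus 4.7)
The plan is to derive the $\ell^1$-type bound from the $\ell^2$-type bound in Proposition \ref{prop:fluctuation_stochastic_strconvex} by applying Jensen's inequality twice. Proposition \ref{prop:fluctuation_stochastic_strconvex} already gives control on $\mathbb{E}\left[\frac{1}{N}\sum_{i=1}^N\|\tilde z_t^i\|_*^2\right]$, so the remaining task is just to pass from the mean of squares to the square of the mean, losing only a square root.

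Concretely, I would proceed in two short steps. First, since $x\mapsto x^2$ is convex on $[0,\infty)$, Jensen's inequality applied to the outer expectation gives
\begin{align}
\left(\mathbb{E}\left[\frac{1}{N}\sum_{i=1}^N\|\tilde z_t^i\|_*\right]\right)^2 \leq \mathbb{E}\left[\left(\frac{1}{N}\sum_{i=1}^N\|\tilde z_t^i\|_*\right)^2\right].
\end{align}
Second, applying Jensen (or equivalently Cauchy--Schwarz with the uniform probability measure on $\{1,\dots,N\}$) to the inner average,
\begin{align}
\left(\frac{1}{N}\sum_{i=1}^N\|\tilde z_t^i\|_*\right)^2 \leq \frac{1}{N}\sum_{i=1}^N\|\tilde z_t^i\|_*^2,
\end{align}
so that chaining the two inequalities yields
\begin{align}
\left(\mathbb{E}\left[\frac{1}{N}\sum_{i=1}^N\|\tilde z_t^i\|_*\right]\right)^2 \leq \mathbb{E}\left[\frac{1}{N}\sum_{i=1}^N\|\tilde z_t^i\|_*^2\right].
\end{align}

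Finally, I would insert the bound from Proposition \ref{prop:fluctuation_stochastic_strconvex} on the right-hand side and take square roots of both sides, which is legitimate since both sides are nonnegative. This gives exactly the claimed inequality with the same constants $K$, $\kappa+\underline{\lambda}$ and the same $N$-dependence. There is no genuine obstacle here: the argument is purely a Jensen/Cauchy--Schwarz moment comparison, and the fact that the square root of the right-hand side of Proposition \ref{prop:fluctuation_stochastic_strconvex} preserves the exponential decay in $t$ and the $1/N$ factor is immediate once one observes that taking a square root of a sum of two nonnegative terms is bounded by the square root of each factored form in turn. The only very minor point to note is that the exponent on the first summand loses a factor of two in the rate (one gets $e^{-(\kappa+\underline\lambda)t/2}$ inside the square root in spirit), but this is absorbed into the bound as stated since the square root is kept on the outside.
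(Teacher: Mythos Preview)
Your argument is correct and follows essentially the same moment-comparison route as the paper: the paper applies H\"older's inequality to $x=\sum_i\|\tilde z_t^i\|_*$ for the outer expectation and then bounds $x^2$ via Young's inequality on the cross terms (obtaining $(1+N)\sum_i\|\tilde z_t^i\|_*^2$), before invoking Proposition~\ref{prop:fluctuation_stochastic_strconvex}. Your double application of Jensen is a cleaner packaging of the same idea and in fact avoids the extraneous factor $(1+N)/N$ that the paper's expansion introduces, but both proofs are the same in substance.
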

\begin{proof}
Let $||\cdot||$ be an arbitrary norm. Observe that by H\"older's inequality $\mathbb{E}\left[|x|\right]\leq \left(\mathbb{E}\left[|x|^2\right]\right)^{\frac{1}{2}}$. Let $x=\sum_{i=1}^N||z_t^i||$. Observe that,
\begin{align}
|x|^2=\sum_{i=1}^N||z_t^i||^2+\sum_{i=1}^N\sum_{j\neq i}^N||z_t^i||\; ||z_t^j||&\leq\sum_{i=1}^N||z_t^i||^2+\frac{1}{2}\sum_{i=1}^N\sum_{j\neq i}^N\left( ||z_t^i||^2+||z_t^j||^2\right)\\
\leq&(1+N)\sum_{i=1}^N||z_t^i||^2,
\end{align}
where we have applied Young's inequality in the second inequality. We obtain,
\begin{align}
\mathbb{E}\left[\frac{1}{N}\sum_{i=1}^N||z_t^i||\right]\leq \mathbb{E}\left[\frac{1+N}{N^2}\sum_{i=1}^N||z_t^i||^2\right]^{\frac{1}{2}}.
\end{align}
The statement follows using Proposition \ref{prop:fluctuation_stochastic_strconvex}. 

\end{proof}

The above statements show that for a strongly convex objective approximate consensus can be achieved for a sufficiently high $t$ and for a sufficiently large $\kappa+\underline{\lambda}$. In the strongly convex case approximate consensus can be achieved even in the case of no interaction if $\kappa$ is sufficiently large. For the convex case we have $\kappa=0$. In this case approximate consensus can be achieved if $\underline{\lambda}$ is sufficiently high. For a convex objective without interaction we would be left with the noise term similar to the result in Proposition \ref{prop:fluctuation_stochastic}. Without interaction the strong convexity of the objective thus determines in how much the effect of noise can be reduced. With interaction the interaction type itself, i.e. the $\underline{\lambda}$, also plays a role and for a sufficiently high $\underline{\lambda}$ the effects of noise on consensus can be minimized. We remark furthermore that we bound the dual norm by the $L_2$ norm using norm equivalence; the downside of this is that an additional dimensionality-dependence can be obtained. This could be mitigated by working with directly the dual norm as in \cite{raginsky12} but in this case the obtained result is rather limited to a specific set of objective functions; alternatively one can work in the primal space directly. The latter idea will be addressed in future work. 

\subsubsection{Extensions to distributed optimization}
A common setting is to consider minimizing $f(x)=\sum_{i=1}^N f_i(x)$ and each particle corresponds to a computing worker or processor having access only to $f_i$ and thus $\nabla f_i\circ \nabla\Phi^*(z_t^i)$ would be used in \eqref{eq:cmdpart} (instead of $\nabla f\circ \nabla\Phi^*(z_t^i)$). The proof of Proposition \ref{prop:fluctuation_stochastic} and \ref{prop:fluctuation_stochastic_strconvex} is based the fact that each particle has access to the full objective function $f$. As a result terms that would be relevant in a distributed setup related to variability of each $f_i$ vanish. 
Clearly, this results in a better bound and thus a better convergence result, but the current results can be extended for the distributed case by increasing the interaction strength and multiplying $A$ in \eqref{eq:cmdpart} by a sufficiently high constant to ensure consensus arises. The details are left for future work.

\subsubsection{On the tradeoff between noise reduction and interaction}
From the results in Section \ref{sec:gen_bound_ismd} we observed that for interacting particles the convergence can be improved since the effect of noise in the term $\frac{1}{N}\sigma ||\Delta\Phi(z_t^i)||_\infty$ is reduced by a factor $1/N$. We however remark that this will only be achieved if the term $|| \tilde z_t^i||_*$ is bounded and non-increasing with $N$. As we showed in Section \ref{sec:bound_fluct} we can present a bound on the interaction term. In particular, we showed that the fluctuation is bounded and non-increasing with $N$. The fluctuation is bounded in the $L_2$ norm, and a noise term does remain; as discussed, this noise term can be controlled by the strong convexity of the objective or by imposing an interaction strength, which means replacing $A$ with $\vartheta A$ for $\vartheta>1$. In other words, when $\underline{\lambda}+\kappa$ is sufficiently high the fluctuation term is sufficiently small. Specifically, in the strongly convex case for a large $N,T$ the fluctuation term is bounded by $\frac{dK}{(\kappa+\underline{\lambda})}\sigma^2$ for some constant $K$. We conclude this section by saying that as long as the decrease in the value of $\frac{1}{N}\sigma ||\Delta\Phi(z_t^i)||_\infty$ is larger than the increase in the value of $|| \tilde z_t^i||_*$, interaction with $N$ particles achieves a \emph{closer} convergence to the optimum. In other words, the amount of particles can be seen as an alternative to a decreasing learning rate or vanishing noise variance (e.g. the latter can be achieved by increasing the mini-batch size).

\section{Understanding performance from convergence to stationarity}\label{sec:sample}

We proceed with exploiting tools from the analysis of SDEs and in particular the rate of convergence to their invariant distribution. Studying the convergence of the $Law(z_t)$ or $Law(\mathbf{z}_{t})$ is a cornerstone in the analysis of sampling schemes	 
and can also provide valuable insights into the optimization problems; see\cite{raginsky2017non,shi2020learning} for recent works in this direction. In particular, we will discuss the convergence of \eqref{eq:smd} or \eqref{eq:cmdpart} to the corresponding stationary
distributions. Note that in the spirit of dual (or Nesterov) averaging
the SDE in \eqref{eq:cmdpart} is fairly standard as it evolves
only in the mirror space. In the analysis so far we were investigating
how close a time average $\frac{1}{T}\int_{0}^{t}f(x_{t}^{i})dt$ (or
$\frac{1}{T}\int_{0}^{t}f\circ\nabla\Phi^{*}(z_{t}^{i})dt$) is to $f(x^{*})$
using a Lyapunov method based on Bregman divergence. We will complement these results with rates of convergence
based on logarithmic Sobolev inequalities based on the celebrated
Bakry-Emery approach, see \cite{bakry1997sobolev}, \cite{bakry2013analysis} for details. 

\subsection{Stationary distributions and particle correlations}


We first consider the case when particles do not interact. One approach for finding $x^*$ is to run $N$ independent copies of \eqref{eq:smd} for a long time and then select best particle $i^*$ as $\arg\min_{i\in \{ 1,\ldots,N\} }f(x^i_t)$. 
This intuitive approach is based on
$\min f$ being equivalent to $\max\exp(-\frac{2}{\sigma^2}\mathcal{V})$ and on the convergence of $\frac{dLaw(z_t)}{dz}(z)$ to $\frac{1}{Z}\exp(-\frac{2}{\sigma^2}\mathcal{V})$. 
When stacking the particles together in $\mathbf{z}$, then one can postpone choosing the best particle and 
considers $\min_{\mathbf{x}} \sum_{i=1}^N f(x^i)$ or $\min_{\mathbf{z}} \sum_{i=1}^N V(z^i)$ instead, with each particle $i$ here sharing the same properties and dynamics.

To make this a bit clearer for our setup, denote with $\eta_t=Law(\mathbf{z}_t)$ the law of $N$ independent particles each following \eqref{eq:smd}; or equivalently each $z_t^i$ evolving as \eqref{eq:cmdpart} with $A=0$. It is well-known (see e.g. \cite{pavliotis14book}) that under appropriate assumptions on the objective $\mathcal{V}$, the stationary distribution of this SDE is given by,
\begin{align}
\eta_{\infty}\left(d\mathbf{z}\right)=\frac{1}{Z}\exp\left(-\frac{2}{\sigma^{2}}\sum_{i=1}^N \mathcal{V}(z^i)\right)d\mathbf{z},\label{eq:eta_iid}
\end{align}
with $Z$ being a finite normalizing constant.
It is clear that finding the mode of $\eta_{\infty}$ is equivalent to solving the following optimization problem: 
\begin{align}\label{eq:opt_sum_V}
\mathbf{z^*}=\arg\min_{\mathbf{z}\in\mathbb{R}^{dN}}\sum_{i=1}^N \mathcal{V}(z^i).
\end{align}

An important observation here is that independence of each $i$ is not crucial and one can modify the cost $\mathcal{V}(z^i)$ by adding terms that do not affect the minimizer. This is true when using instead the following cost function for $\mathbf{z}$:
\begin{align}\label{eq:defw}
\mathcal{W}(\mathbf{z})=\sum_{i=1}^{N}\mathcal{V}(z^{i})+\frac{1}{2}\sum_{i=1}^{N}\sum_{j=1}^{N}A_{ij}(z^{i}-z^{j})^{2}.
\end{align}
Note that the quadratic term due to interaction terms acts as a regularizer,
which aims to impose consensus i.e. $z^{i}=\bar{z}^N$.  We can make the this claims more precise in the following lemma. 

\begin{lemma}[Interaction preserves the minimum]\label{lem:minimum_preserved}
Let $z^*:=\arg\min_{z}\mathcal{V}(z)$, and let $\mathcal{W}(\mathbf{z})$ be as defined in \eqref{eq:defw}. Then $\mathbf{z^*}={({z^*}^T,\ldots,{z^*}^T)}^T$ is a minimizer of $\mathcal{W}(\mathbf{z})$. 
\end{lemma}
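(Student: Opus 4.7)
The plan is to exhibit $\mathbf{z}^*=({z^*}^T,\ldots,{z^*}^T)^T$ as a global minimizer by evaluating $\mathcal{W}$ at $\mathbf{z}^*$ and comparing against a lower bound that holds for arbitrary $\mathbf{z}\in\mathbb{R}^{dN}$. The two ingredients are that (i) each summand $\mathcal{V}(z^i)$ individually satisfies $\mathcal{V}(z^i)\geq \mathcal{V}(z^*)$ by definition of $z^*$, and (ii) the interaction term is a sum of nonnegative quantities because the entries $A_{ij}$ are nonnegative (as they are entries of a doubly stochastic matrix) and $(z^i-z^j)^2\geq 0$.

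Concretely, first I would evaluate $\mathcal{W}(\mathbf{z}^*)$: since $z^i=z^j=z^*$ for all $i,j$, every interaction term vanishes, so
\begin{align}
\mathcal{W}(\mathbf{z}^*) = \sum_{i=1}^N \mathcal{V}(z^*) = N\,\mathcal{V}(z^*).
\end{align}
Next, for arbitrary $\mathbf{z}\in\mathbb{R}^{dN}$, the interaction penalty is nonnegative, so
\begin{align}
\mathcal{W}(\mathbf{z}) \geq \sum_{i=1}^N \mathcal{V}(z^i) \geq \sum_{i=1}^N \mathcal{V}(z^*) = N\,\mathcal{V}(z^*) = \mathcal{W}(\mathbf{z}^*),
\end{align}
where the second inequality uses that $z^*$ minimizes $\mathcal{V}$. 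This establishes that $\mathbf{z}^*$ is a global minimizer.

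There is no real obstacle here; the statement is essentially a consequence of the interaction term being a nonnegative penalty that is exactly zero on the consensus diagonal, together with the fact that the unconstrained minimum of $\sum_i \mathcal{V}(z^i)$ is attained on that diagonal. I would only mention in passing that $\mathbf{z}^*$ need not be the unique minimizer in general (uniqueness would require, e.g., strict convexity of $\mathcal{V}$, i.e.\ Assumption \ref{ass:V_strconvex}, in which case both the first and second terms are strictly minimized only at the consensus with $z^i=z^*$).
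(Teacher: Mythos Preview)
Your proof is correct and in fact more direct than the paper's. The paper argues via first- and second-order optimality conditions: it assumes $\nabla\mathcal{V}(z^*)=0$ and $\textnormal{Hess}(\mathcal{V})(z^*)\succeq\beta I_d$ for some $\beta>0$, then computes $\nabla\mathcal{W}(\mathbf{z}^*)=0$ and uses diagonal dominance to deduce $\textnormal{Hess}(\mathcal{W})(\mathbf{z}^*)\succeq\beta I_{dN}$. Strictly speaking this only certifies a \emph{local} minimizer; global minimality then relies on the convexity established separately in Corollary~\ref{cor:convex}. Your argument bypasses all of this: you simply observe that the interaction term $\tfrac{1}{2}\sum_{i,j}A_{ij}(z^i-z^j)^2$ is nonnegative (since $A$ is doubly stochastic, hence $A_{ij}\geq 0$) and vanishes on the consensus diagonal, so $\mathcal{W}(\mathbf{z})\geq\sum_i\mathcal{V}(z^i)\geq N\mathcal{V}(z^*)=\mathcal{W}(\mathbf{z}^*)$ for every $\mathbf{z}$. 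This yields global minimality immediately, without differentiability, Hessian computations, or a separate convexity result. The paper's approach does buy something extra, though: the Hessian calculation is reused verbatim in Corollary~\ref{cor:convex} and feeds into the Bakry--Emery machinery of Section~\ref{sec:sample}, so in context it is not wasted effort.
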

\begin{proof}
From the definition of $z^*$  and the optimality conditions we have $\nabla \mathcal{V}(z^*)=0$ and that there exists $\beta>0$ such that $\textnormal{Hess}(\mathcal{V})(z^*)\succeq \beta I_d$. 
We have,
\begin{align}
(\nabla \mathcal{W}(\mathbf{z}))_i = \nabla\mathcal{V}(z^{i}) + \sum_{j\neq i} A_{ij}(z_i-z_j).
\end{align}
Clearly, when $z^i=z^*$ for every $i$ we then have $\nabla \mathcal{W}(\mathbf{z})=0$.
Furthermore:
\begin{align}\label{eq:hess_w_ii}
(\textnormal{Hess}(\mathcal{W}(\mathbf{z}))_{ii} = \textnormal{Hess}(\mathcal{V}(z^{i}))+ \sum_{j\neq i} A_{ij},
\end{align}
and
\begin{align}\label{eq:hess_w_ij}
(\textnormal{Hess}(\mathcal{W}(\mathbf{z}))_{ij} = - A_{ij}.
\end{align}
This implies
\begin{align}
(\textnormal{Hess}(\mathcal{W}(\mathbf{z^*}))_{ii} \geq \beta  - \sum_{j\neq i} (\textnormal{Hess}(\mathcal{W}(\mathbf{z^*}))_{ij},
\end{align}
so that $\textnormal{Hess}(\mathcal{W})(\mathbf{z}^*)\succeq \beta I_{dN}$ and $\mathbf{z^*}$ minimizes $\mathcal{W}$ since $\beta>0$.
\end{proof}

We will now extend the previous discussion on invariant distributions and $\mathbf{z^*}$ for ISMD and \eqref{eq:cmdpart} with $A$ being nonzero. We will denote the distribution of the joint particle system and each marginal as follows:
\begin{align}
\eta_{t}^{N}=Law(\mathbf{z}_{t})\qquad\text{ and }\qquad \eta_{t}^{i,N}=Law(z_{t}^{i}).
\end{align}
Recall $\mathbf{z}_t$ evolves as in \eqref{eq:cmdpart_vec}, 
whose the invariant distribution is given as
\begin{align}
\eta_{\infty}^{N}\left(d\mathbf{z}\right)=\frac{1}{Z_{N}}\exp\left(-\frac{2}{\sigma^{2}}\left(\sum_{i=1}^{N}\mathcal{V}(z^{i})+\frac{1}{2}\mathbf{z}^{T}\mathcal{L}\mathbf{z}\right)\right)d\mathbf{z},\label{eq:eta_int},
\end{align}
where $Z_{N}$ is the normalization constant. 
Similar as in the non-interacting setting, finding
the mode of $\eta_{\infty}^{N}$ is equivalent to solving the following
optimization problem: 
\begin{align}
\mathbf{\tilde z}=\arg\min_{z\in\mathbb{R}^{dN}}\mathcal{W}(z),
\end{align}
which we saw earlier in Lemma \ref{lem:minimum_preserved} is equivalent to solving \eqref{eq:opt_sum_V}. 

Despite introducing correlations between the particles through $\mathcal{L}$, the mode of the invariant distribution of the interacting particle system is exactly the minimizer of the objective $\mathcal{V}$ and is achieved when all particles are at consensus. The presence of the noise means that exact consensus or synchronization of all particles cannot be achieved.
Similar to SMD stochasticity implies that at stationarity the $z_t^i$ will behave as samples from the invariant distribution and thus oscillating together around the optimum $z^*$. In this context it is convenient to use convergence results for $\eta_{t}^{i,N}$ for deriving performance bounds for optimization or assessing 
the level of consensus based on differences between $\eta_{t}^{i,N}$ and $ \eta_{t}^{j,N}$.


In addition to establishing positive curvature at $\mathbf{z^*}$, using diagonal dominance in the final steps in the proof of Lemma \ref{lem:minimum_preserved} implies positive definiteness for $\mathcal{W}$. This is not surprising given the sum of quadratics in the interaction term acts as a convex regularizer. We summarize this in the corollary below: 
\begin{corollary}\label{cor:convex}
Let Assumptions \ref{ass:f} -- \ref{ass:mirror2} and \ref{ass:V_strconvex} hold. Then $\mathcal{W}$ is strongly convex.
\end{corollary}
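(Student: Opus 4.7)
The plan is to upgrade the local argument at $\mathbf{z}^*$ used in Lemma \ref{lem:minimum_preserved} to a uniform lower bound on the Hessian of $\mathcal{W}$, valid at every $\mathbf{z} \in \mathbb{R}^{dN}$. Since $\mathcal{W}$ is twice continuously differentiable under our assumptions, strong convexity follows once we show $\textnormal{Hess}(\mathcal{W})(\mathbf{z}) \succeq \kappa I_{dN}$ for all $\mathbf{z}$, with $\kappa > 0$ the constant from Assumption \ref{ass:V_strconvex}.

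The first step is simply to write out $\textnormal{Hess}(\mathcal{W})$ in block form, as was already done in \eqref{eq:hess_w_ii}--\eqref{eq:hess_w_ij}, but now keeping $\mathbf{z}$ arbitrary rather than specialized to $\mathbf{z}^*$. Using the reparameterization from Section \ref{sec:vec_form_ismd}, the interaction term can be recognized as $\tfrac{1}{2}\mathbf{z}^T \mathcal{L} \mathbf{z}$, so
\begin{align}
\textnormal{Hess}(\mathcal{W})(\mathbf{z}) = \mathbf{D}(\mathbf{z}) + \mathcal{L},
\end{align}
where $\mathbf{D}(\mathbf{z})$ is the $dN \times dN$ block-diagonal matrix with $i$th block equal to $\textnormal{Hess}(\mathcal{V})(z^i)$.

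Next, I would bound each of the two summands separately. By Assumption \ref{ass:V_strconvex}, $\mathcal{V}$ is $\kappa$-strongly convex, so $\textnormal{Hess}(\mathcal{V})(z^i) \succeq \kappa I_d$ for all $z^i$, which gives $\mathbf{D}(\mathbf{z}) \succeq \kappa I_{dN}$ uniformly in $\mathbf{z}$. For the interaction contribution, recall that $\mathcal{L} = L \otimes I_d$ where $L$ is the graph Laplacian of a doubly stochastic matrix $A$; its eigenvalues are $0 = \lambda_0 < \underline{\lambda} \leq \cdots$, so $\mathcal{L} \succeq 0$. Adding the two inequalities gives
\begin{align}
\textnormal{Hess}(\mathcal{W})(\mathbf{z}) \succeq \kappa I_{dN} + \mathcal{L} \succeq \kappa I_{dN},
\end{align}
uniformly over $\mathbf{z}$, which establishes $\kappa$-strong convexity of $\mathcal{W}$.

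There is no real obstacle here; the only subtlety compared to Lemma \ref{lem:minimum_preserved} is that one should not appeal to diagonal dominance (which only gave positive semidefiniteness at the critical point after cancelling against the interaction's off-diagonals): the strong convexity of $\mathcal{V}$ already provides a strict positive lower bound on the block-diagonal part everywhere, and $\mathcal{L} \succeq 0$ can only improve it. As an aside one can sharpen the bound to $\textnormal{Hess}(\mathcal{W}) \succeq \kappa I_{dN} + \mathcal{L}$, which on the subspace orthogonal to $\mathbf{1}_N \otimes \mathbb{R}^d$ yields the stronger constant $\kappa + \underline{\lambda}$ that appears in Proposition \ref{prop:fluctuation_stochastic_strconvex}.
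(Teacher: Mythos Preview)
Your proof is correct and follows essentially the same route as the paper: decompose $\textnormal{Hess}(\mathcal{W})$ into the block-diagonal part coming from $\sum_i \mathcal{V}(z^i)$ and the graph Laplacian $\mathcal{L}$, use Assumption \ref{ass:V_strconvex} to lower-bound the former by $\kappa I_{dN}$, and use $\mathcal{L}\succeq 0$ for the latter. Your remark about the sharper constant $\kappa+\underline{\lambda}$ on the subspace orthogonal to consensus is a nice addition not spelled out in the paper's proof.
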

\begin{proof}
 Assumption \ref{ass:V_strconvex} implies that $\textnormal{Hess}(\mathcal{V})(z)\succeq \kappa I_d$ for some $\kappa> 0$. Note that \eqref{eq:hess_w_ii}-\eqref{eq:hess_w_ij} hold for any $\mathbf{z}$ and 
\begin{align}
\textnormal{Hess}(\mathcal{W})=\textnormal{Hess}\left(\sum_{i=1}^{N}\mathcal{V}(z^{i})\right)+\frac{1}{2}\textnormal{Hess}(\mathbf{z}^{T}\mathcal{L}\mathbf{z})\succeq\kappa  I_{dN}.
\end{align} 
\end{proof}
The result is somewhat negative in that to obtain strict convexity of $\mathcal{W}$ one requires strong convexity of $\mathcal{V}$, that is adding $z^{T}\mathcal{L}z$ is not sufficient to obtain strong convexity when $\mathcal{V}$ is only convex. This is particularly relevant later when we will apply Bakry-Emery theory and derive log Sobolev inequalities. 

\subsection{Log-Sobolev inequalities for the particle system}\label{sec:log_sob_N}

We proceed to present the exponential convergence properties for $\eta_t^N$. Our first result in this section is to apply Bakry-Emery theory to derive the Log-Sobolev inequality for the particle system. Similar to the result in Lemma 3.5, Corollary 3.7 in \cite{malrieu01} we have the following result:
\begin{proposition}[Convergence of ISMD using Bakry-Emery theory]\label{prop:logsobolev_particle}
Let Assumptions \ref{ass:f} -- \ref{ass:mirror2} and \ref{ass:V_convex} hold (w.r.t the Euclidean norm). 
Assume also $\eta_{0}^{N}$ satisfies a log-Sobolev inequality with
constant $C_{0}.$ Then $\eta_{t}^{N}$ satisfies a log-Sobolev inequality for any smooth function $f$
\begin{align}\label{eq:lsi_particles}
\textnormal{Ent}_{\eta_{t}^{N}}(f^2)\leq C_{t}\mathbb{E}_{\eta_{t}^{N}}\left[\left|\nabla f\right|^{2}\right]
\end{align}
with constant $C_{t}=\frac{2}{\rho}\left(1-e^{-\rho t}\right)+C_{0}e^{-\rho t}$ 
where $\rho=\frac{\sigma^{2}}{2}\kappa$.
If in addition Assumption \ref{ass:V_strconvex} holds, we have 
\begin{equation}
H(\eta_{t}^{N}|\eta_{\infty}^{N})\leq Ke^{-2\rho t}\quad\text{and}\quad W_{2}(\eta_{t}^{N},\eta_{\infty}^{N})\leq\sqrt{\frac{K}{2}}e^{-\rho t},\label{eq:KL_bound_eta_t}
\end{equation}
and 
\begin{align}\label{eq:erm}
|\mathbb{E}_{\eta^{N}_t}\mathcal{W}-\mathcal{W}(\mathbf{z^*})| \leq \sqrt{\frac{K}{2}}e^{-\rho t}+\frac{\sigma^{2}}{2}\left(\frac{2dN}{\rho}-\frac{1}{2}\log\left(\frac{\sigma^{2}}{2L_{N}}\right)\right),
\end{align}
with $L_N = \frac{L}{\mu}+||\mathcal{L}||$ where $||\mathcal{L}||$ is any matrix norm on $\mathcal{L}$. 
\end{proposition}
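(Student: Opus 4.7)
The plan is to recast \eqref{eq:cmdpart_vec} as the overdamped Langevin SDE $d\mathbf{z}_t = -\nabla \mathcal{W}(\mathbf{z}_t)\,dt + \sigma\,d\mathbf{B}_t$ with $\mathcal{W}$ as in \eqref{eq:defw}, and then invoke the Bakry--Emery curvature calculus. By Corollary~\ref{cor:convex}, under Assumption~\ref{ass:V_strconvex} one has $\textnormal{Hess}(\mathcal{W}) \succeq \kappa\, I_{dN}$, so the curvature-dimension condition holds with parameter $\rho = \sigma^2\kappa/2$. First I would derive \eqref{eq:lsi_particles} via the standard interpolation argument (e.g.\ Theorem~5.5.2 in \cite{bakry2013analysis}): set $u_s := P_{t-s} f^2$ with $P_t$ the Markov semigroup of the SDE, differentiate $s\mapsto P_s(u_s \log u_s)$, and combine the curvature lower bound on $\Gamma_2$ with Gronwall's lemma to obtain the advertised $C_t = \frac{2}{\rho}(1-e^{-\rho t}) + C_0 e^{-\rho t}$. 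Letting $t\to\infty$ gives the time-independent LSI for $\eta_\infty^N$ with constant $2/\rho$.

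Second, for the KL bound in \eqref{eq:KL_bound_eta_t}, I would use the de Bruijn-type identity along the Fokker--Planck flow,
\begin{equation*}
\frac{d}{dt} H(\eta_t^N | \eta_\infty^N) = -\frac{\sigma^2}{2}\,I(\eta_t^N | \eta_\infty^N),
\end{equation*}
where $I$ is the relative Fisher information, and invoke the LSI for $\eta_\infty^N$ to lower-bound $I$ by $H$; Gronwall then yields $H(\eta_t^N|\eta_\infty^N)\le H(\eta_0^N|\eta_\infty^N)\,e^{-2\rho t}$. The $W_2$ estimate is then immediate from the Otto--Villani $T_2$ inequality (implied by LSI), namely $W_2^2(\eta_t^N,\eta_\infty^N) \le (2/\rho)\,H(\eta_t^N|\eta_\infty^N)$, which combined with the previous bound gives the $\sqrt{K/2}\,e^{-\rho t}$ rate, with $K$ absorbing the initial relative entropy.

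Third, for \eqref{eq:erm} I would split
\begin{equation*}
|\mathbb{E}_{\eta_t^N}\mathcal{W} - \mathcal{W}(\mathbf{z}^*)| \le |\mathbb{E}_{\eta_t^N}\mathcal{W} - \mathbb{E}_{\eta_\infty^N}\mathcal{W}| + |\mathbb{E}_{\eta_\infty^N}\mathcal{W} - \mathcal{W}(\mathbf{z}^*)|.
\end{equation*}
The transient part I would bound via Kantorovich--Rubinstein duality using that $\nabla \mathcal{W}$ is Lipschitz with effective constant $L_N = L/\mu + \|\mathcal{L}\|$ (inheriting from Assumption~\ref{ass:f}, the conjugate-correspondence bound \eqref{eq:lip_phi_star}, and linearity of the interaction): the first term is then at most $L_N\,W_2(\eta_t^N,\eta_\infty^N)$, matching the $\sqrt{K/2}\,e^{-\rho t}$ contribution after rescaling $K$. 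For the stationary gap I would use the Gibbs form $\eta_\infty^N \propto e^{-2\mathcal{W}/\sigma^2}$ and the sandwich $\kappa\,I_{dN} \preceq \textnormal{Hess}(\mathcal{W}) \preceq L_N\,I_{dN}$ to compare with centered Gaussians: a Brascamp--Lieb estimate gives $\mathbb{E}_{\eta_\infty^N}\|\mathbf{z}-\mathbf{z}^*\|^2 \le \sigma^2 dN/\rho$, and a second-order Taylor expansion of $\mathcal{W}$ around $\mathbf{z}^*$ produces the $\sigma^2 dN/\rho$ piece; the $-\tfrac{1}{4}\log(\sigma^2/2L_N)$ correction then arises from Laplace-type asymptotics of the partition function $Z_N$, with the upper Hessian bound $L_N$ controlling the Gaussian normalization.

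The main obstacle is the stationary gap $\mathbb{E}_{\eta_\infty^N}\mathcal{W} - \mathcal{W}(\mathbf{z}^*)$: the logarithmic term forces a genuinely two-sided Hessian estimate, so one must carefully propagate both the strong-convexity lower bound $\kappa$ and the Lipschitz upper bound $L_N$ through the Laplace expansion, and keep track of dimension factors to obtain the stated $\frac{\sigma^2}{2}$-prefactor. A secondary delicate point is that $\mathcal{W}$ has quadratic, rather than globally linear, growth, so the $W_2$-to-expectation transfer in the transient term either requires truncation to a high-probability set (supplied by LSI-based concentration of $\eta_t^N$ and $\eta_\infty^N$) or a displacement-convexity estimate along the $W_2$-geodesic connecting the two laws.
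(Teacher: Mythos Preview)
Your treatment of the LSI in \eqref{eq:lsi_particles} and the relative-entropy/Wasserstein decay in \eqref{eq:KL_bound_eta_t} matches the paper essentially line for line: both verify the Bakry--Emery criterion via Corollary~\ref{cor:convex}, obtain $C_t$ by the standard semigroup interpolation, and deduce \eqref{eq:KL_bound_eta_t} from entropy decay plus Otto--Villani.

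The real divergence is in the stationary gap $\mathbb{E}_{\eta_\infty^N}\mathcal{W}-\mathcal{W}(\mathbf{z}^*)$. You propose Brascamp--Lieb plus a second-order Taylor expansion and Laplace asymptotics. The paper instead uses the thermodynamic identity
\[
\mathbb{E}_{\eta_\infty^N}\mathcal{W}=\tfrac{\sigma^2}{2}\bigl(\mathcal{H}(\eta_\infty^N)-\log Z^N\bigr),
\]
bounds the differential entropy $\mathcal{H}(\eta_\infty^N)$ above by the entropy of the Gaussian with the same covariance $\Sigma^N$ (Lemma~\ref{lem:entropy}/Corollary~\ref{cov_ips_bound}), bounds $\log Z^N$ below via the quadratic upper bound on $\mathcal{W}$ and a Gaussian integral (Proposition~\ref{prop:logZ_lb}), and finally controls $\log\det\Sigma^N$ through $\log\det\Sigma^N\le Tr(\Sigma^N-I)$ together with the second-moment bound $\Sigma^N\preceq 2C_\infty I_{dN}$ delivered by the Herbst argument. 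This route, adapted from \cite{raginsky2017non}, avoids a pitfall in yours: Brascamp--Lieb controls fluctuations around the \emph{mean}, not around the mode $\mathbf{z}^*$, so a Taylor bound centred at $\mathbf{z}^*$ still requires a separate estimate of $\|\mathbb{E}_{\eta_\infty^N}\mathbf{z}-\mathbf{z}^*\|$, and the leading term one naturally obtains carries an extra factor $L_N$ rather than the clean $2dN/\rho$. The entropy/partition-function decomposition sidesteps this and yields the constants in \eqref{eq:erm} directly. On the transient term you are right to flag that $\mathcal{W}$ grows quadratically; the paper simply writes $\sqrt{K/2}\,e^{-\rho t}$ without further elaboration, so your proposed concentration/truncation fix is in fact more careful than what the paper spells out.
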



\begin{proof}
For the proof we will rely on the Bakry-Emery theory. To state the Bakry-Emery criterion for \eqref{eq:cmdpart_vec}, we need to define the following differential operators:
\[
\mathfrak{L}(f)=-\nabla\mathcal{W}^T\nabla f +\frac{\sigma^2}{2} \textnormal{Hess} (f), \Gamma(f)=\frac{1}{2}\left(\mathfrak{L}(fg)- f\mathfrak{L}(g)- g\mathfrak{L}(f)\right), \Gamma_2(f)=\frac{1}{2}\left(\mathfrak{L}(\Gamma(f,f))-2\Gamma((\mathfrak{L}(f),f) \right).
\]
The Bakry-Emery criterion requires to verify $\Gamma_2(f)\succeq \rho\Gamma(f,f)$. We have:
\begin{align}
\Gamma_2(f)-\rho\Gamma(f,f)=\Big( \frac{\sigma^2}{2}\Big)^2\|\textnormal{Hess}(f)\|^2+\frac{\sigma^2}{2}\|\nabla f \|^2  ( \frac{\sigma^2}{2} \textnormal{Hess} (\mathcal{W})- \rho I ),
\end{align}
so need to verify 
\begin{align}
\textnormal{Hess}(\mathcal{W})\succeq \frac{2}{\sigma^2}\rho I_{dN},
\end{align}
that holds for $\rho=\frac{\sigma^{2}}{2}\kappa$ from Assumption \ref{ass:V_convex} or  Assumption \ref{ass:V_strconvex}, Corollary \ref{cor:convex} and $\mathcal{L}\mathbf{1}=0$. 
For the second part we explicitly require $\rho>0$, which follows straightforwardly from Assumption \ref{ass:V_strconvex}.
This gives directly $C_t$ and we note that the second term in $C_t$ is due to the random initialization (e.g. see \cite[Corollary 3.7]{malrieu01}). When Assumption \ref{ass:V_strconvex} holds and $\rho>0$ we have 
\begin{equation}
H(\eta_{t}^{N}|\eta_{\infty}^{N})\leq Ke^{-2\rho t},\label{eq:conv_H}
\end{equation}
for $K=H(\eta_0^N|\eta_\infty^N)$; see Theorem \ref{thm:bakry-emery} in Appendix \ref{sec:log_sob_primer}.  Theorem \ref{thm:lsi} gives
$W_{2}(\eta_{t}^{N},\eta_{\infty}^{N})\leq\sqrt{\frac{K}{2}}e^{-\rho t}$.

For the mean-mode result, we first need to establish $\eta^N_\infty\in\mathcal{P}_2(\mathbb{R}^{dN})$. This can be shown using Theorem \ref{thm:lsi} in Appendix \ref{appx:erm} with the log Sobolev constant $C_\infty=\frac{2}{\rho}$ implies
\[
\eta^{N}_\infty\left(e^{sf}\right)\leq e^{s\eta^{N}_\infty(f)+C_\infty s^{2}},
\]
for any 1-Lipschitz function $f$, so consider $f(\mathbf{z})=\mathbf{z}$ and the moment generating function to get the following moment bounds
for $s=0$
\[
\mathbb{E}_{\eta^{N}_\infty}\left[\mathbf{z}\mathbf{z}^{T}\right]\preceq\left(2C_{\infty}I_{dN}+E_{\eta^{N}_\infty}\left[\mathbf{z}\right]E_{\eta^{N}_\infty}\left[\mathbf{z}\right]^{T}\right),
\]
so moments can be bound using exponential integrability and $\eta^N_\infty$ has finite second moment. 
Combining this with 
\begin{equation}
\log\det\left(\Sigma^N\right)\leq Tr\left(\Sigma^N-I\right)\label{eq:log_det_ineq}
\end{equation}
gives 
\[
\log\det\left(\Sigma^N\right)\leq dN\left(2C_{\infty}-1\right).
\]
We can then gather Corollary \ref{cov_ips_bound} and Proposition \ref{prop:logZ_lb} in Appendix \ref{appx:erm} we get 
\begin{align*}
\mathbb{E}_{\eta^{N}_\infty}\mathcal{W}-\mathcal{W}(\mathbf{z^*}) & \leq\frac{\sigma^{2}}{2}\left(\frac{dN}{2}+\frac{1}{2}\log\det\left(\Sigma^{N}\right)-\frac{1}{2}\log\left(\frac{\sigma^{2}}{2L_{N}}\right)\right)\\
 & \leq\frac{\sigma^{2}}{2}\left(\frac{2dN}{\rho}-\frac{1}{2}\log\left(\frac{\sigma^{2}}{2L_{N}}\right)\right)
\end{align*}
The statement then follows by noting that,
\begin{align}
|\mathbb{E}_{\eta^{N}_t}\mathcal{W}(\mathbf{z}_t)-\mathcal{W}(\mathbf{z^*})|\leq |\mathbb{E}_{\eta^{N}_t}\mathcal{W}(\mathbf{z}_t)-\mathbb{E}_{\eta^{N}_\infty}\mathcal{W}(\mathbf{z}_t)| + |\mathbb{E}_{\eta^{N}_\infty}\mathcal{W}(\mathbf{z}_t)-\mathcal{W}(\mathbf{z^*})|,
\end{align}
and using the convergence rate from the result derived above in \eqref{eq:KL_bound_eta_t} for the first term and the mean-mode bound for the second. 
\end{proof}


This result shows that the rate of convergence to the invariant measure is \emph{exponential} in time with factor $\rho$. Using the relationship between the mode of the invariant measure and the minimizer of the objective function, this result implies that with exponential convergence the samples $z_t^i$ will oscillate around the optimum $z^*$ for a strongly convex objective. 
Note that the discrepancy between the mean and the mode at stationarity seen in \eqref{eq:erm} means this oscillation is not centred around $z^*$. Equation \eqref{eq:erm} also
shows that when the width of $\eta_{\infty}^{N}$
is small, i.e. $\sigma^{2}$ is low, then $\mathbf{z}_{t}$
will lie closer to the minimizer.
To identify $z^*$ in practice, one needs to look at the histogram of $\mathbf{z}^i_t$ (w.r.t $i$) or the occupation measure of $\mathcal{W}(\mathbf{z}_{t})$ after some burn-in time $t_0$ or a combination of the two. The amount of required burn-in can be lower when fast convergence to stationarity occurs.
From the expression for $\rho$ we can see that the rate of convergence $\rho$ increases with the noise variance and the strong convexity coefficient of the objective function.
Finally, the dependence on $N$ arrives from dimensionality dependence of certain bounds involving Gaussian integrals; see Proposition \ref{prop:logZ_lb} in Appendix \ref{appx:erm}.

At the level of the marginal $\eta_{\infty}^{i,N}$ a similar bound to \eqref{eq:erm} can be obtained, where $dN$ is replaced with $d$. 
Convergence of $\eta_{t}^{N}$ to $\eta_{\infty}^{N}$ implies
convergence of the marginals so one can deduce a similar result to Proposition \ref{prop:logsobolev_particle} for the marginals and a similar concentration inequality for $\mathcal{V}(z_t^i)$ showing that the dimension dependence of the optimization is with $d$ and not $dN$.  The details are omitted as the proof is very similar to Proposition \ref{prop:logsobolev_particle}. 



\subsubsection{On the tradeoff between noise reduction and interaction: a sampling perspective}
One could attempt to compare Proposition \ref{prop:logsobolev_particle} with the results obtained in Section \ref{sec:gen_bound_ismd}, although they are quite different in nature. In Propositions \ref{prop:conv_ismd_convex} and \ref{prop:conv_ismd_strconvex} we showed that the expected time average converges to a neighborhood of the optimizer at rate 
$e^{-\mu t}$ for the strongly convex objective (with $\mu$ being the convexity constant for $f$). Proposition \ref{prop:logsobolev_particle} and specifically the result in \eqref{eq:erm} looks directly at $\mathcal{W}(\mathbf{z}_{t})-\mathcal{W}(\mathbf{z^{*}})$ and uses instead $\kappa$, the convexity constant of $\mathcal{V}$, which was also used to bound the fluctuation terms in Section \ref{sec:bound_fluct}. While this means direct comparisons are case specific and depend on the choice of the mirror map $\Phi$, the convergence rates arein both settings exponential. 
In other words, for the strongly convex case both Proposition \ref{prop:conv_ismd_strconvex} and Proposition \ref{prop:logsobolev_particle} lead to exponential rate albeit with different constants due to working with $f$ and $\mathcal{V}$ respectively. 

Furthermore, in terms of the tradeoffs between interaction and convergence, both bounds give a similar result. 
Observe that from Proposition \ref{prop:conv_ismd_strconvex} combined with Proposition \ref{prop:fluctuation_stochastic_strconvex} we obtain, 
\begin{align}
\frac{1}{N}\sum_{i=1}^N\mathbb{E}\left[f(x_\infty^i)-f(x^*)\right] \leq K\left(\frac{\sigma^2}{2N}+\frac{d(N-1)}{(\kappa+\underline{\lambda})N}\sigma^2\right)^{1/2},
\end{align}
where we remark that we have simplified notation to shorten the presentation and we assume $T\gg 0$ and denote with $x_\infty^i$ the corresponding exponential time average. The expectation is furthermore taken with respect to the initial condition. 
Furthermore, from \eqref{eq:erm}, 
\begin{align}
\frac{1}{N}\sum_{i=1}^N\mathbb{E}_{\eta_\infty^N}\left[\mathcal{V}(z^i)-\mathcal{V}(z^*) \right] \leq K\left(\frac{2d}{\kappa}+\frac{\sigma^2}{4N}\log\left(\frac{2L_N}{\sigma^2}\right)\right).
\end{align}
From the above bounds -- while as mentioned different in nature -- we observe that both consist of a term which decreases with $N$, and a term which is of $\mathcal{O}(1)$ in $N$. Both bounds additionally decreases with an increase in $\kappa$, or the strong convexity constant of the objective function.

\subsection{Mapping back to the primal space}

Convergence in the mirror domain and in $z^i_{t}$ should imply convergence
in the primal domain and $x^i_t$. Given $\Phi$ is invertible the law of $X_{t}$
follows from standard change of variable/coordinates of continuous
random variables. This is we have for all $t\geq0$:
\[
\frac{dLaw(x_{t}^{i})}{dx}(x)=\frac{d\eta_{t}^{i,N}}{dx}\circ\nabla\Phi^{*}(x)\left|\det\left(\nabla^{2}\Phi^{*}(x)\right)\right|.
\]
Recall an implication of Proposition \ref{prop:logsobolev_particle} is that $W_{2}(\eta_{t}^{N},\eta_{\infty}^{N})\leq K'e^{-\rho t}$, 
so this is equivalent to 
\[
W_{2}\left(Law(x_{t}^{i}),Law(x_{\infty}^{i})\right)\leq K'e^{-\rho t}.
\]
In the context of Bakry-Emery theory and Markov semigroups we refer
the the interested reader to Section 1.15.1 of \cite{bakry2013analysis}
and to \cite{hsieh18} for a rigorous treatment related to sampling
methods including equivalence in total variation norm. 

\section{Numerical results}\label{sec:num}
In this section we report numerical experiments using two standard benchmark problems.
The first problem is based on linear regression and is an adapted version of the experimental setup that appeared in \cite{beck17book}. The second problem is based on a variation of the classic traffic assignment problem \cite{mertikopoulos18}. We consider compare between (Stochastic) Gradient Descent with Euclidean projections ((S)GD), (Stochastic) Mirror Descent with a Bregman divergence generated by the entropy function. In our numerical implementations of the optimization algorithm we use the Euler discretization of the continuous SMD dynamics:
\begin{align}
z^i_{t+1}= z_t^i-\eta\epsilon\nabla f(x_t^i) + \epsilon\sum_{j=1}^NA_{ij}(z_t^j-z_t^i) + \sigma \sqrt{\epsilon} \mathcal{N}(0,1).
\end{align}
The parameters that we have to choose are the time step $\epsilon$, the learning rate $\eta$, the number of training iterations $T_N$ and the noise level $\sigma$ (set to zero in case of GD or MD). 
Unless otherwise mentioned we set $A_{ij}=\frac{1}{N}$, i.e. a mean-field interaction matrix. 
 
\subsection{Linear system with simplex constraints}\label{sec:num_lin}
We consider a first similar set-up as in Example 9.19 from \cite{beck17book} to compare mirror descent with projected gradient descent. Consider the problem,
\begin{align}
\min_{x\in\mathcal{X}} ||Wx-b||_2^2,
\end{align}
where $\mathcal{X}=\Delta_n$, the unit simplex, $W\in\mathbb{R}^{m\times d}$ and $b\in\mathbb{R}^m$. Unless otherwise mentioned, we set $\epsilon=0.1$ and $T_N=2000$. We generate $A$ randomly with some given condition number $\kappa$, in particular a well-conditioned problem with $\kappa(W)=1$ and an ill-conditioned one with $\kappa(W)=100$.  We let $b_i \sim \mathcal{N}(0,1)$. We set $m=100$ and $d=100$. 

\paragraph{Single particle optimization}
We begin with a comparison of the convergence speed of GD, MD and SGD and SMD, to show that MD/SMD can outperform GD/SGD, as shown in the theoretical results in Lemmata \ref{lem:conv_md_convex}-\ref{lem:conv_md_strconvex} and Propositions \ref{prop:conv_smd_convex}-\ref{prop:conv_smd_strconvex}. We observe from Figure \ref{fig1} that mirror descent algorithms attain a much faster convergence than gradient descent in the system where the matrix $W$ has a high condition number. For a low condition number the performance of mirror descent is similar to that of gradient descent. Mirror descent seems more robust to a higher noise level, and is able to converge even in a high noise setting. We set $\sigma=0.05$. 

\begin{figure}[h!]
\centering
  \includegraphics[width=0.4\textwidth]{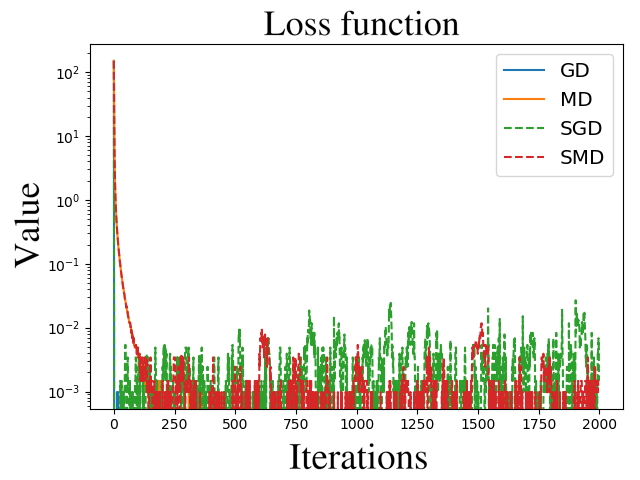}
    \includegraphics[width=0.4\textwidth]{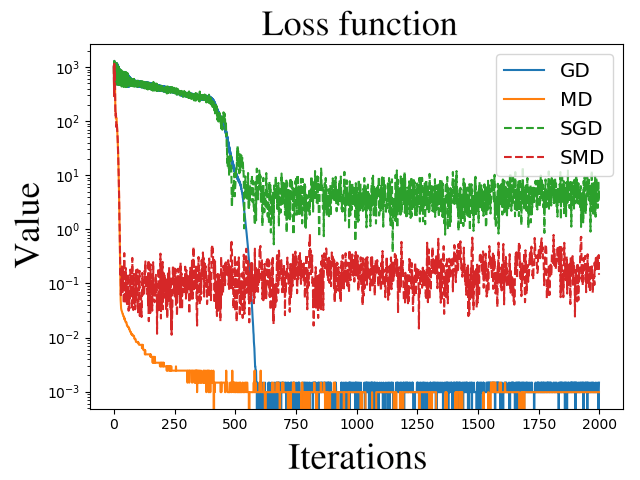}
      \caption{A comparison between GD, MD, SGD and SMD for the linear system with condition number 10 with $\eta = 0.3/\sqrt{t}$ (L) and condition number 100 with $\eta = 0.1/\sqrt{t}$ (R). MD attains a better performance, both in the deterministic as stochastic setting for a high condition number.}\label{fig1}
\end{figure}

\paragraph{Interacting particle optimization}
The theoretical results in Section \ref{sec:gen_bound_ismd} show that the expected value of the distance between the objective evaluated at the time average and the objective evaluated at the optimum is smaller for a larger number of particles. In this section we analyze this numerically and observe that ISMD is indeed able to converge closer to the  minimum. Here we consider the linear system optimized with one, 10 and 100 particles. We set $\sigma=0.05$ and $\eta = 0.01$ for $\kappa(W)=10$ and $\eta = 0.001$ for $\kappa(W)=100$. We remark that we thus use \emph{fixed} learning rates. In Figure \ref{fig2} we show the initial convergence speed. In the setting with a high condition number the convergence speed using interaction can be significantly faster than when considering just a single particle. 

\begin{figure}[h!]
\centering
  \includegraphics[width=0.4\textwidth]{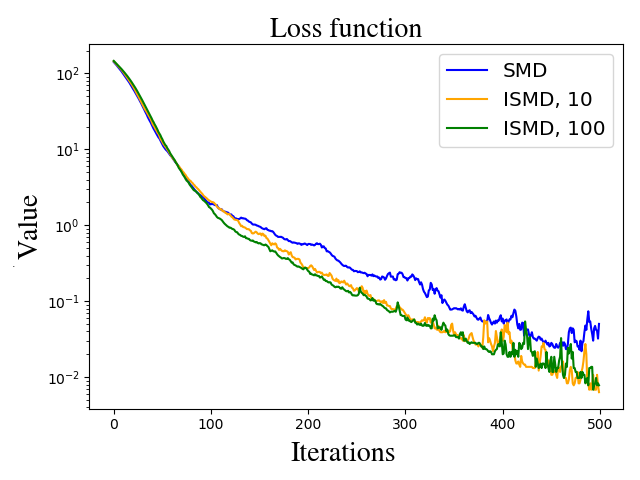}
    \includegraphics[width=0.4\textwidth]{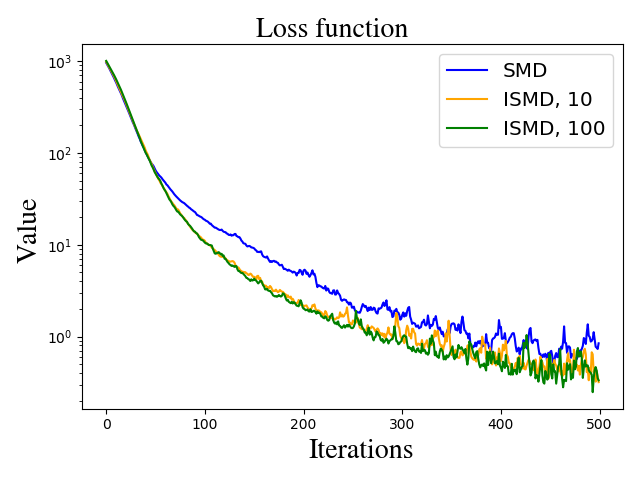}
         \caption{A comparison between the initial convergence of SMD and ISMD for the linear system with condition number 10 (L) and condition number 100 (R). We observe a speedup in convergence using interacting particles.}\label{fig2}
\end{figure}

In Figure \ref{fig3} we show the convergence speed as well as the distribution of the loss $f(x_t^i)-f(x^*)$ for the linear system with a condition number 100. 
We observe from the plots and histograms that the noise in the interacting sampler is smaller so that it able to converge to closer to the global minimum than single-particle optimization. More specifically, the more particles used, then the smaller the variance of the samples is, so that the values are closer to the true minimizer. 


\begin{figure}[h!]
\centering
    \includegraphics[width=0.4\textwidth]{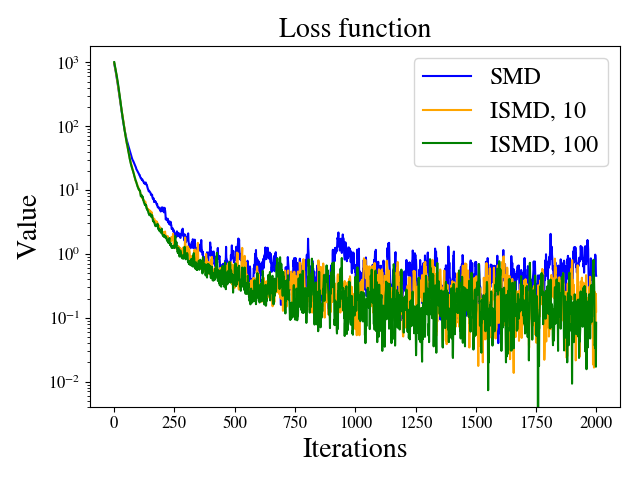}
         \includegraphics[width=0.4\textwidth]{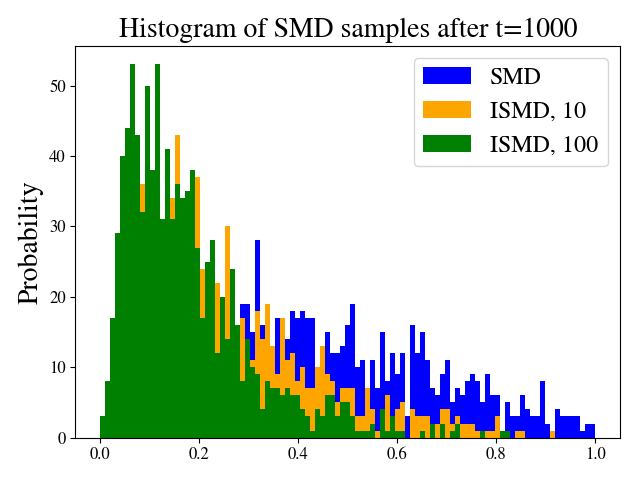}
         \caption{A comparison between SMD and ISMD for the linear system with condition number 100: (L) the loss function, (R) the histogram which shows the distribution of the losses $f(x_t^i)-f(x^*)$ after convergence ($t>1000$).  
         }\label{fig3}
\end{figure}

\paragraph{Convergence: vanishing noise against interaction}
In this example we study how using a vanishing noise compares to using a fixed learning rate with and without interaction. Often in practice, a vanishing noise is achieved by using a larger batch size, so that the gradient noise is smaller. In Figure \ref{fig4} the results for the distance to the optimum, $f(x_t^i)-f(x^*)$, are shown. As usual, the convergence speed using interaction is higher and using interaction can achieve the same effect as using a vanishing noise in terms of the distance to the optimum. We remark here that using an interaction strength, i.e. letting the interaction term be given by $\theta\sum_{i=1}^NA_{ij}(z_t^j-z_t^i)$ can further help in controlling the noise variance. The effects and necessity of such an interaction term will be studied in future work. An additional benefit of having noise is that noise allows to escape local minima in a non-convex problem. Being able to control this noise without decreasing the convergence speed is a significant benefit. 

\begin{figure}[h!]
\centering
  \includegraphics[width=0.4\textwidth]{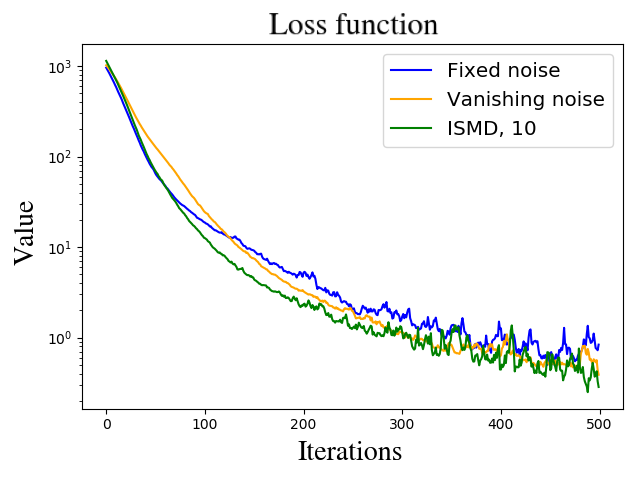}
    \includegraphics[width=0.4\textwidth]{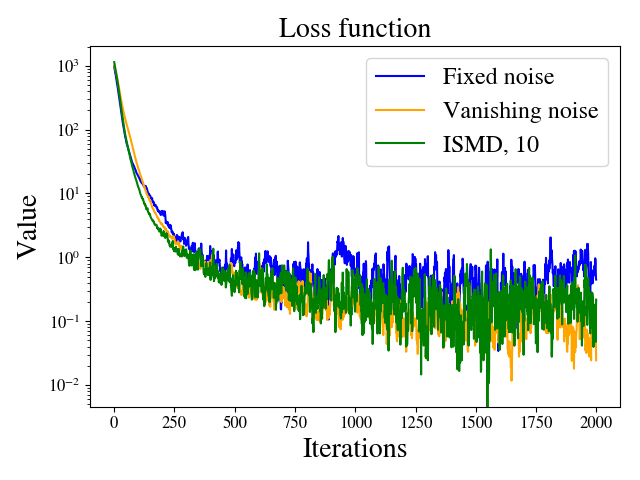}
         \caption{The initial distance to the optimum (L) and the distance to the optimum after convergence (R) for the linear problem with condition number 100 for SMD with a fixed noise $\sigma=0.05$, SMD with a vanishing noise $\sigma = \sigma/(t+1)^{1/10}$ and ISMD with 10 particles.}\label{fig4}
\end{figure}

\paragraph{Variance reduction}
As implied by the theory in Section \ref{sec:gen_bound_ismd} and the numerical results in the last section, the expected value of the distance between the objective evaluated at the time average and the objective evaluated at the optimum is smaller for a larger number of particles. 
We keep the learning rate fixed at $\lambda=0.001$ and set the condition number to $100$. To make a fair comparison we sample $N$ i.i.d. copies of the SMD algorithm, and use $N$ particles in the ISMD algorithm. We first plot the variance of $f(x_t^i)-f(x^*)$ after convergence ($t>1000$) for the i.i.d. an interacting setting in the left-hand side of Figure \ref{fig5}. Clearly, in the non-interacting setting the variance is not decreased when increasing the number of particles and due to the algorithm not having converged properly the variance is fluctuating. Even more so, the evolution of the variance of the loss is chaotic since the number of particles does not influence it. In the interacting setting more particles results in a lower loss variance. 
To understand the effects of interaction and the number of particles on the fluctuation term we compute $||x_T^i-\bar x_t^N||_2$, whose results are shown in the right-hand side of Figure \ref{fig5}. As expected from the theoretical analysis the deviation from the mean is smaller in the interacting case than in the i.i.d. case. Furthermore, as $N$ increases the deviation remains constant, validating the theoretical results in Section \ref{sec:bound_fluct} that the fluctuation is bounded and a non-increasing function of $N$. 

\begin{figure}[h!]
\centering
  \includegraphics[width=0.4\textwidth]{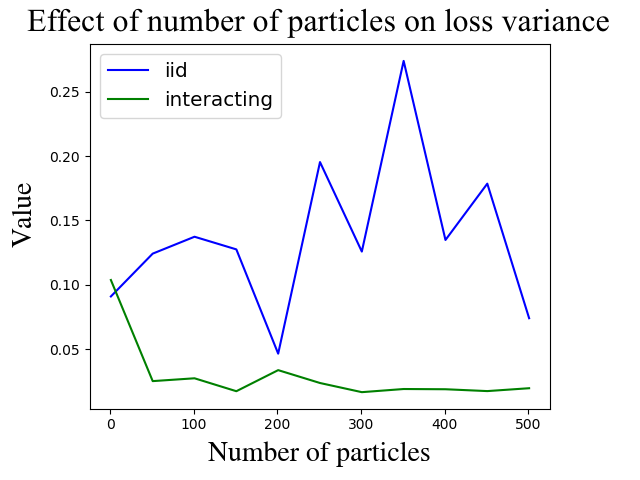}
  \includegraphics[width=0.4\textwidth]{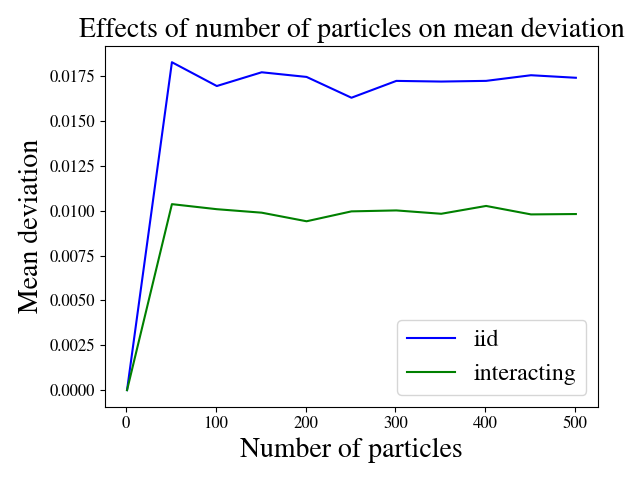}
         \caption{The variance of the loss trajectory as a function of the amount of particles (L) and the mean of the $L_2$-norm of the fluctuation term $||x_T^i-\bar x_t^N||_2$ for the i.i.d. and interacting case. We see that in the interacting case increasing the number of particles decreases the variance. The deviation from the mean is smaller in the interacting case, as expected. }\label{fig5}
\end{figure}


\subsection{Traffic Assignment Problem}
The objective of the traffic assignment problem is to compute the optimal path between two nodes in a graph. To save space we refer the reader to \cite{mertikopoulos18} for a precise description of the problem. We only mention that the problem is a convex optimization problem with a simplex constraint and therefore fits our framework. 

Our experiments below are based on a random geometric graph $G(n,r)$ i.e. we placed $n$ points uniformly at random in $[0, 1]^2$ and connect any two points whose distance is at most some $w$. The weights of the edges in the graph also act as the weight of each edge. We randomly chose two nodes to act as the origin and destination nodes. To set up the optimization problem we also need to compute all the simple paths between the origin and destination node. We denote the length of the maximum path considered in this phase as $r_{\max}$. The size of $r_{\max}$ decides the dimensionality of the problem. For the stochastic version of the problem we added Gaussian $N(0,\sigma_g)$ noise on each edge.
We used a constant step-size for all algorithms. For mirror descent (all variations) we used $\eta=0.2$ and for gradient descent we manually selected the best step-size for the problem (see below for additional remarks regarding GD for this problem).

\paragraph{Comparison between MD and GD}
An interesting difference between the regression problem above and the traffic assignment problem is that the solution of the latter is sparse. As a result the solution is on the boundary with many paths having zero load. 
In this case the gradient descent algorithm performed considerably worse than all variations of mirror descent (i.e. with many or single particles) and in all variations of the problem we considered (small/large problems, with and without noise).  
In order to illustrate the difference in performance between mirror descent and gradient descent we plot the results for one example in Figure \ref{fig:deterministic_md_gd}. The graph used to generate the problem above has $100$ nodes, and the resulting optimization problem has $5831$ possible paths i.e. $d=5831$.
We begin with an illustration of the performance of standard MD against GD in Figure \ref{fig:deterministic_md_gd}; GD is significantly slower to converge than MD. Given the large difference in performance between MD and GD we will not report any more results from GD for the traffic assignment problem.

\begin{figure}[h!]
\centering
\includegraphics[width=0.4\textwidth]{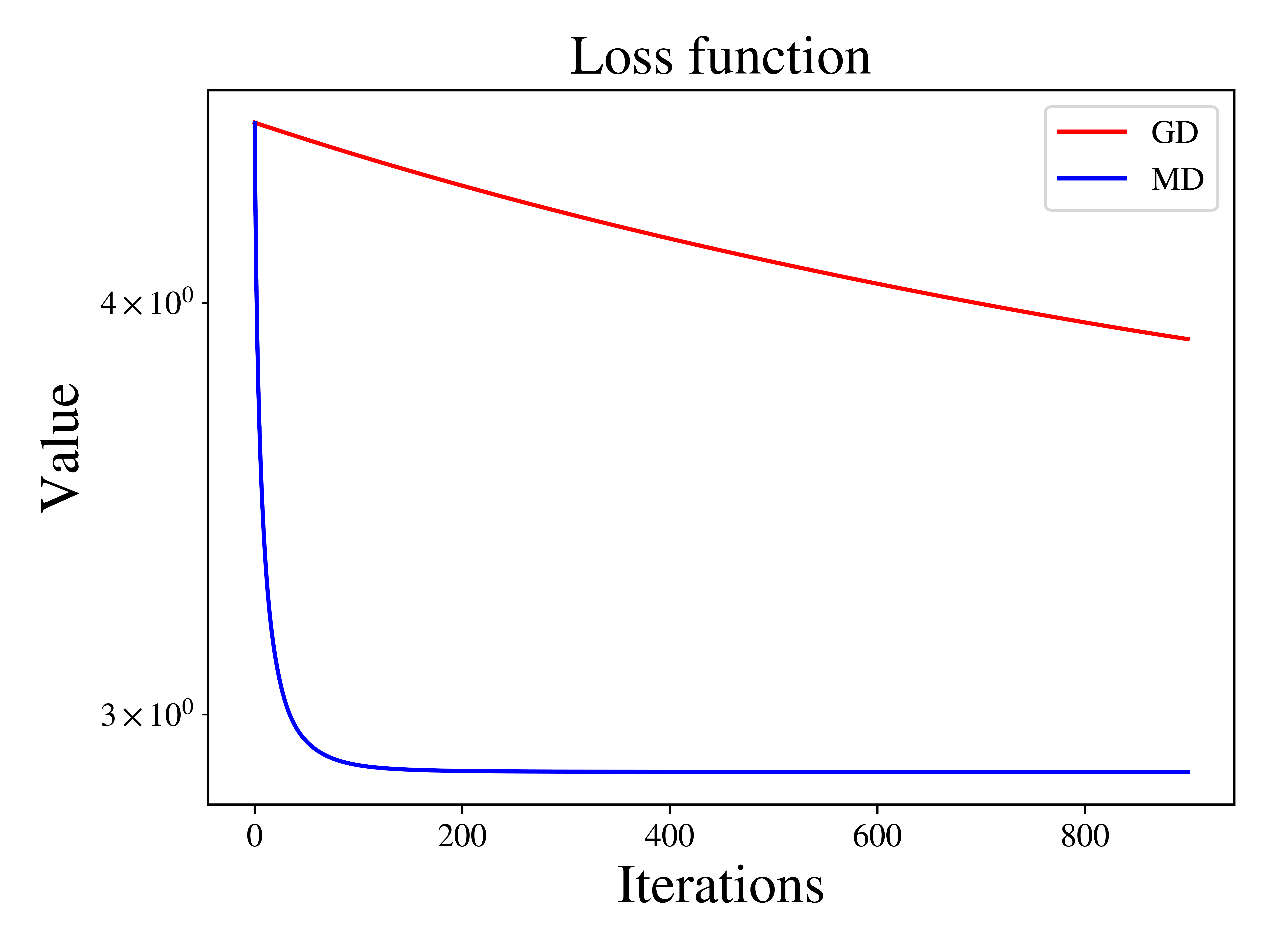}
\caption{A comparison between MD and GD. GD with Euclidean projections is considerably slower than MD for this problem because the solution is sparse. IMD performs the same as MD for deterministic problems.}\label{fig:deterministic_md_gd}
\end{figure}

\paragraph{Comparison between SMD and ISMD}
Similar to the experiments run for the linear regression we compare the iterations of the single particle mirror descent i.e. (SMD) and its interacting variant (ISMD) with a different number of particles $(N=10,100,1000)$. 
In the left of Figure \ref{fig:path_smd_ismd} we show the iteration history of (SMD) and (ISMD) with $N=100$ particles. To visualize the differences between (SMD) and (ISMD) and the impact of the number of particles we plot a histogram in the right plot in Figure \ref{fig:path_smd_ismd}  of the iterations after convergence (typically $t>100$). Clearly there is a considerable reduction in the variance of the iterates between (MD) and (ISMD). We attempt to quantify the reduction in variance in the next set of experiments.

\begin{figure}[h!]
\centering
\includegraphics[width=0.42\textwidth]{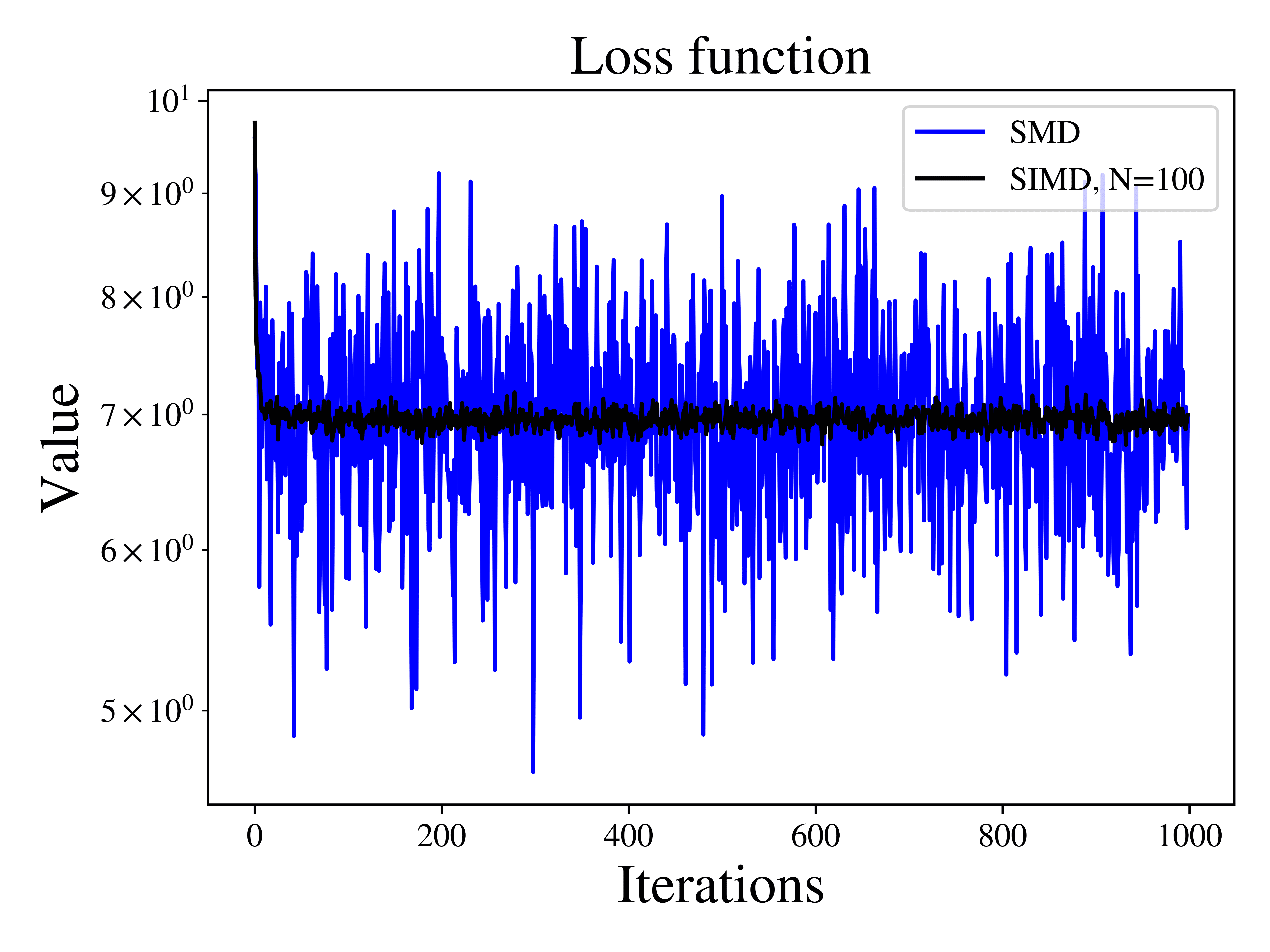}
\includegraphics[width=0.4\textwidth]{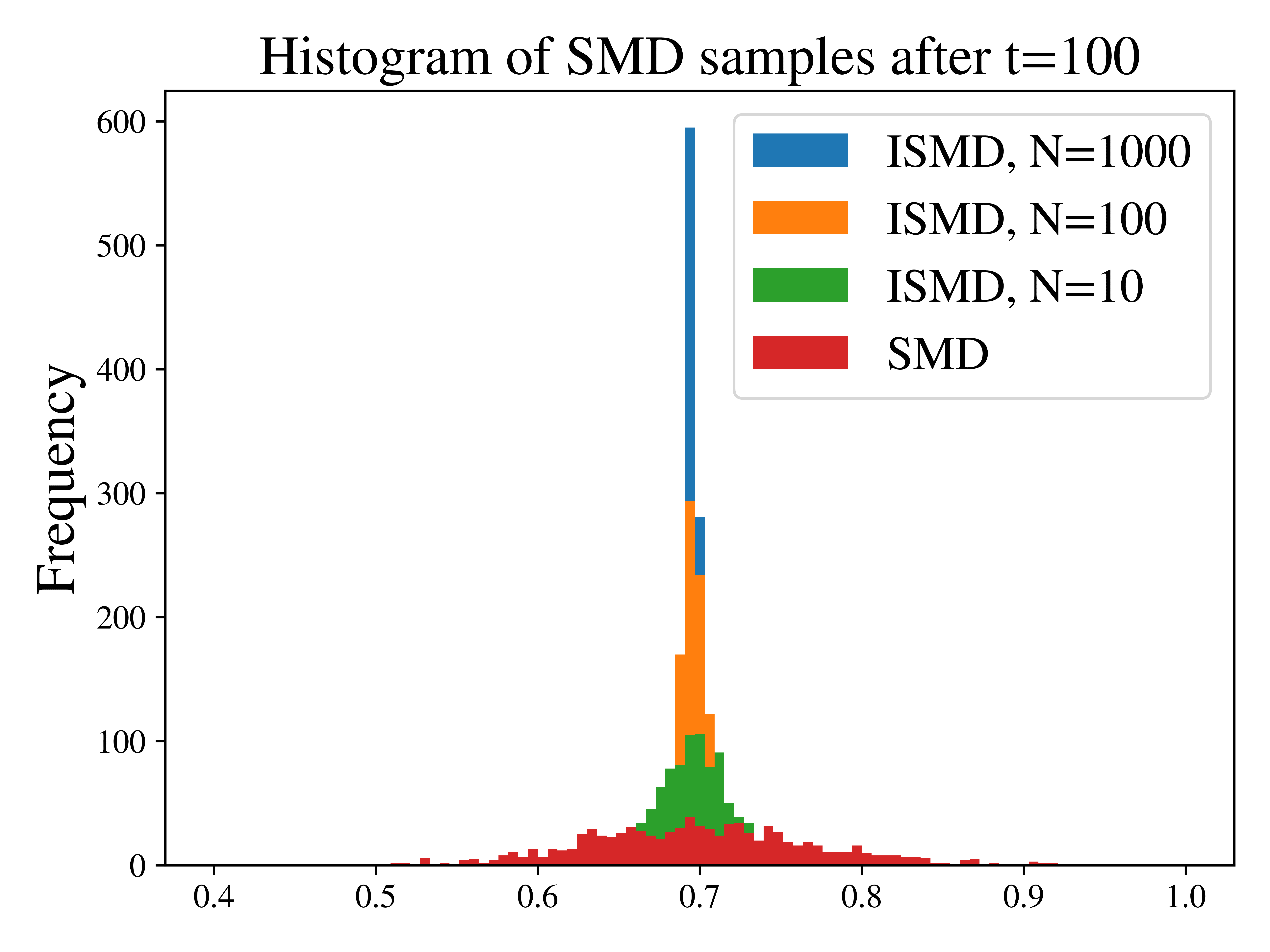}
\caption{Comparison between SMD and ISMD for a single path (L) and a histogram of the samples for the comparison between SMD and ISMD with 10, 100 and 1000 particles (R). With more particles the variance of the samples is lower.}\label{fig:path_smd_ismd}
\end{figure}

In order to further quantify the variance reduction between ISMD and SMD in a fair way it is important to compare the reduction in variance between $N$ i.i.d. copies of SMD and ISMD with $N$ particles. In Table \ref{tab1} we present the variance reduction results for different particles and observe that interaction can significantly reduce the variance. 

\begin{table}[H]
\begin{center}
\begin{tabular}{lllll}
 Nodes & $r_{\max}$ & $d$ & $N$ & $\tfrac{\sigma^2_{SMD}-\sigma^2_{ISMD}}{\sigma^2_{SMD}}$\\ \hline\hline
50 & 5 & 70 & 10 & 0.18 \\
50 & 5 & 70 & 50 & 0.21 \\
50 & 5 & 70 & 100 & 0.09\\
\end{tabular}
\end{center}
\caption{A comparison of the variance of $N$ i.i.d. copies of SMD and $N$ particles in ISMD. We obtain a variance reduction using interaction.}\label{tab1}
\end{table}

\subsection{Mini-batch optimization}\label{sec:mini_batch}
In machine learning the optimization objective typically consists of a sum over data samples, i.e. $f(x) = \sum_{i=1}^m f_i(x)$, where $m$ is the sample size. The gradient is typically computed over a subset of the data (a mini-batch), since for large $m$ computing the full gradient is too costly. The downside of computing the gradient over a mini-batch is that the gradient contains noise. In our work we proposed interaction as a way of reducing the effects of this gradient noise. In this section we show how interaction helps in reducing the effect of noise in mini-batch gradient descent, and consider in more detail the computational costs related to interaction against an increased batch size. The ideas presented here can be generalized to popular machine learning techniques such as neural networks.  

Consider a similar setup as in Section \ref{sec:num_lin},
\begin{align}
f(x) = \frac{1}{m}\sum_{i=1}^m f_i(x):=\frac{1}{m}\sum_{i=1}^m ||W_{i,\cdot}x-b_i||_2^2.
\end{align}
In every iteration, the gradient is computed over a subset of the data, $f_\mathcal{S}(x)=\frac{1}{|\mathcal{S}|}\sum_{i\in\mathcal{S}}f_i(X)$, where $|\mathcal{S}|$ refers to the size of the batch. The optimization algorithm is then given by, 
\begin{align}
z^i_{t+1}= z_t^i-\eta\epsilon\nabla f_\mathcal{S}(x_t^i) + \epsilon\sum_{j=1}^NA_{ij}(z_t^j-z_t^i).
\end{align}
The noise is thus implicit in the gradient $\nabla f_\mathcal{S}$. 

\paragraph{Convergence speed} 
In Figure \ref{fig7} the convergence results are presented for different batch sizes $|\mathcal{S}|$ with and without interaction. We set $m=200$ and $d=100$, and use $\kappa(W)=10$ with $\eta=0.1$ and $\kappa(W)=200$ with $\eta=0.1$. Using $N$ particles infers a computational cost of $N$ times that of a single particle. However, as observed in the plot, using $N$ particles allows to use a significantly smaller batch, i.e. due to the interaction the optimization can converge even with a higher noise. We conclude that interaction can be a convenient methodology for improving the convergence of the algorithm when using stochastic gradients. 

\begin{figure}[h!]
\centering
\includegraphics[width=0.4\textwidth]{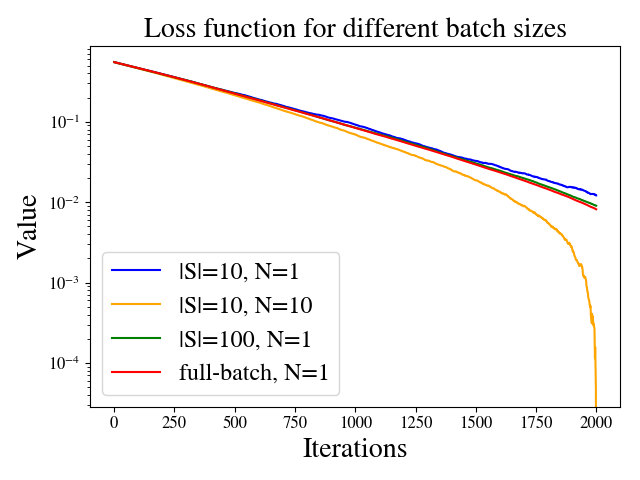}
\includegraphics[width=0.4\textwidth]{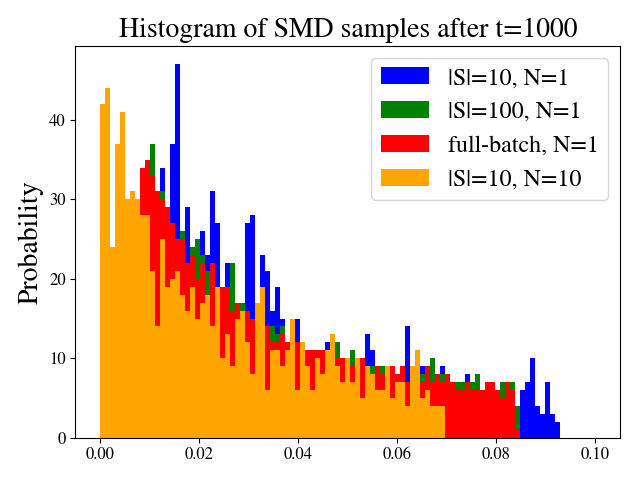}\\
\includegraphics[width=0.4\textwidth]{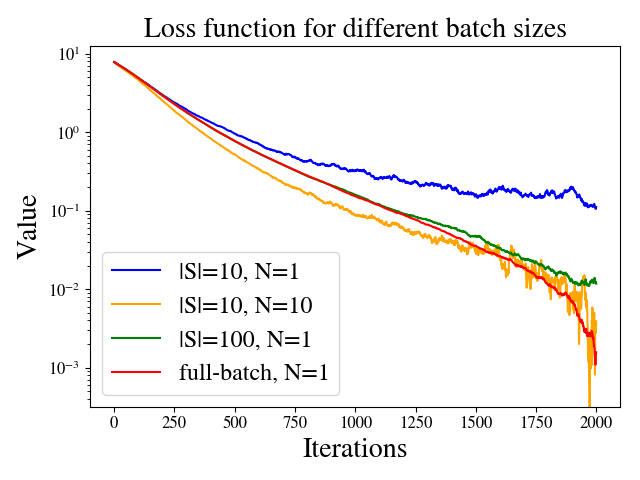}
\includegraphics[width=0.4\textwidth]{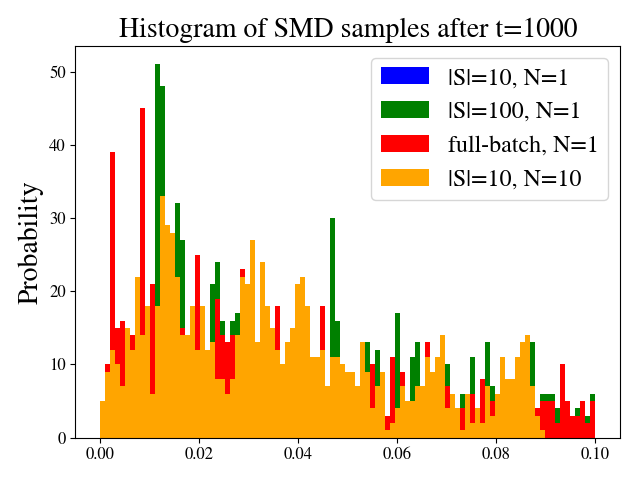}
\caption{The loss function (L) and the histogram (R) for ISMD with different batch sizes and different particles for $\kappa(W)=10$ (top) and $\kappa(W)=200$ (bottom). The loss function is computed as a sum over samples, and the gradient is computed over a batch of samples. Using interacting particles allows to use a smaller batch size while still attaining convergence. The presented results are averaged over 10 runs.}\label{fig7}
\end{figure}

\paragraph{Changing the interaction matrix}
In this section we study the trade-offs between interaction matrix sparsity and convergence time. We use a batch size of 10 and 10 nodes, and $\kappa(W) = 200$ with $\eta=0.01$. Table \ref{tab2} shows the results for a Erdős-Rényi communication graph, a random graph where each edge is chosen with a certain probability. A connectivity probability of $p$ means that on average each node is connected to $p\times 10$ other nodes. The communication between the nodes is thus determined by the connectivity probability. The weight matrix $A$ is determined by a doubly stochastic version of this communication graph. From the results we see, as expected, if more communication is present the convergence is faster at the cost of more communication. Interestingly, the total communication is lowest for the most sparse graph, showing that fast convergence can even be achieved with minimal communication. We do remark that this depends on the particular setup of the problem: in a same $f$ setting the convergence can be equally fast for sparse and dense communication (not shown here) while for highly different local functions $f_i$ more communication may be needed to achieve convergence. In Figure \ref{fig8} we plot the convergence speed for the different interaction matrices. In correspondance with the results in Table \ref{tab2} we observe a faster initial convergence for the densely connected matrix. Interestingly, even with sparse communications a relatively fast convergence can be obtained, showing the potential of our methods.
\begin{table}[h!]
\begin{center}
\begin{tabular}{c||c|c|c}
$\mathbb{P}(\textnormal{connectivity})$ & 0.3& 0.5 & 1\\\hline\hline
Time to convergence & 883 & 712 & 347\\
Total communication for convergence & 2649 & 3560 & 3470\\
Loss at $t=2000$& 42.93 & 42.83 & 41.93\\
Consensus error at $t=2000$ & 0.013 & 0.012 & 0.011\\
\end{tabular}
\end{center}
\caption{A comparison of the communication costs and convergence time for different interaction graphs. The results are averaged over 10 runs of the algorithm (both the generation of the graph as the optimization). Time to convergence is determined as the first time the loss value hits below a certain level (here 43.02). The total communication for convergence is determined as the number of iterations needed to converge times the average communication per round.}\label{tab2}
\end{table}

\begin{figure}[h!]
\centering
\includegraphics[width=0.4\textwidth]{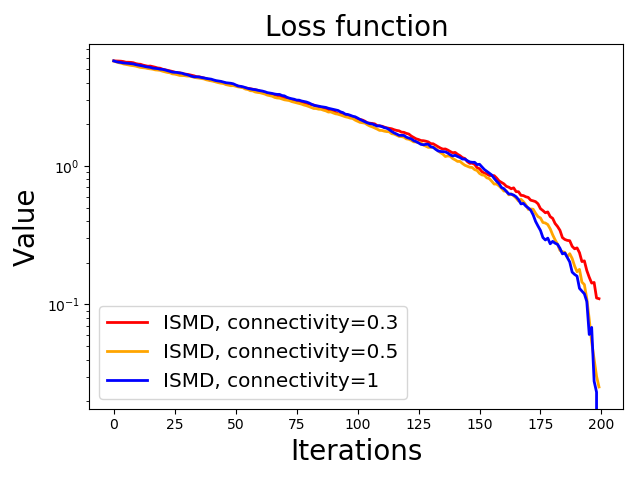}
\includegraphics[width=0.4\textwidth]{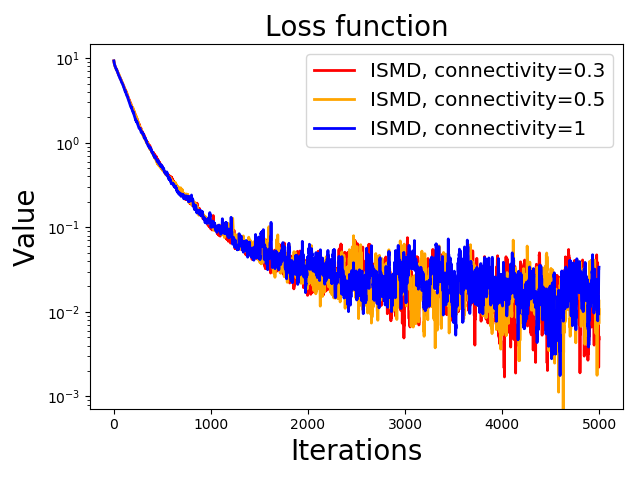}
\caption{The loss function for initial convergence (L) and full convergence (R) for different interaction matrices. Interstingly, even with sparse communication a fast convergence can be achieved.}\label{fig8}
\end{figure}

\section{Conclusion}\label{sec:concl}

In this work we analyzed stochastic mirror descent with interacting particles with the aim of understanding the tradeoffs between computation, communication and variance reduction. With a fixed learning rate or a constant noise variance convergence to the exact minimizer is not possible. Prior results showed that decreasing the learning rate \cite{shi2020learning} or using a vanishing noise variance \cite{mertikopoulos18} can result in good convergence properties of stochastic optimization. In our work we showed that using $N$ interacting particles can in a similar fashion decrease the effect of noise and result in a closer convergence to the optimum. In addition using interaction decreases the variance of the samples further than that of $N$ independent replicas of SMD. We therefore argue that interaction between $N$ particles is a viable method and can control the convergence of the algorithm. 

We analyzed the convergence from an optimization perspective exploiting Lyapunov functions. We showed that the deviation of an individual particle from the particle mean is bounded and showed that ISMD achieves a linear convergence rate in the convex case and an exponential one in the strongly convex setting. We furthermore presented the relationship between sampling from the invariant measure and converging to the optimizer of the objective function. Using log-Sobolev inequalities we obtained an explicit convergence rate to the invariant measure, implying that the particles converge to a neighborhood of the optimizer at an exponential rate. Our numerical results show that interaction is beneficial in cases where the solution is sparse and therefore the behavior of the algorithm at the boundary is important. This is a case where the role of the mirror map becomes more critical, which is also reflected in our assumptions. 

Interesting extensions include the study of SMD in the non-convex setting where the noise can help escape from local minima (see e.g. \cite{zhu18}, \cite{helffer1998remarks}). In this case the interaction can play the role of a regularizer due to the convexification of the loss surface \cite{zhang15}. Additionally,even in a non-convex setting using interaction enforces the particles to converge to consensus in a local minimum. The interaction can also act as a pre-conditioner and its statistical properties may help speed up the convergence or minimize the variance . An open question is the study of the optimal interaction matrix beyond just mixing times \cite{boyd2004fastest}. Alternatively of interest is to study the choice of the interaction such that communication is minimal, but variance reduction and consensus is still guaranteed.

It is also interesting to study such algorithms in a truly distributed setting, i.e. when each particle has access to a subset of the data. This is the case in learning with privacy constraints or federated learning approaches. Some preliminary results on this have been presented in \cite{borovykh20icml}. In this setting also the effect of an interaction strength will be of interest and we aim to address the necessity of considering an additional interaction strength $\theta$, i.e. using $\theta\sum_{i=1}^NA_{ij}(z_t^j-z_t^i)$, in future work. More specifically, in a distributed setting the particles optimize different objectives, and -- unlike in our setting of a same objective -- convergence to consensus cannot be guaranteed without imposing a sufficiently high interaction strength.


Lastly, the effect of time discretization was not considered here. This is a well-understood topic. We refer the interested reader to \cite{malrieu2006concentration}, \cite{veretennikov2006ergodic} for details related to the Euler discretization and \cite{gu18} for more advanced schemes which also include acceleration.

\paragraph{Acknowledgements}
This project was funded by JPMorgan Chase $\&$ Co under a J.P. Morgan A.I. Research Award 2019. Any views or
opinions expressed herein are solely those of the authors listed, and may differ from
the views and opinions expressed by JPMorgan Chase $\&$ Co. or its affiliates. This
material is not a product of the Research Department of J.P. Morgan Securities
LLC. This material does not constitute a solicitation or offer in any jurisdiction.
G.A.P. was partially supported by the EPSRC through the grant number EP/P031587/1.

\appendix
\section*{Appendix}

\section{Auxiliary results for Section \ref{sec:ismd}} \label{sec:lemma_conv_md}
Here we proof the Lemma's that were used in Section \ref{sec:ismd}.
In Proposition \ref{prop:conv_ismd_convex} we used the following lemma. 
\begin{lemma}\label{lem:conv_md_convex0}
Under the continuous stochastic mirror descent dynamics \eqref{eq:smd} it holds,
\begin{align}
\int_0^T \frac{1}{N} \sum_{i=1}^N(y_t^N-x^*)^T\nabla f(x_t^i) dt \leq \frac{1}{2}D^2_{\Phi,\mathcal{X}} + \frac{\sigma^2T}{2N}\Delta \Phi^*(\bar z_t^N)+\int_0^T \frac{\sigma}{\sqrt{N}}||y_t^N-x^*||_2dB_t^i.
\end{align}
\end{lemma}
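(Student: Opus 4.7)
The plan is to adapt the Lyapunov argument from Proposition \ref{prop:conv_smd_convex} to the dynamics of the particle average $\bar z_t^N$, exploiting the fact that the interaction term vanishes in the averaged equation and the driving noise is effectively reduced by a factor of $1/\sqrt{N}$.

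First I would derive the SDE satisfied by the average. Averaging \eqref{eq:cmdpart} over $i$ and using that $A$ is doubly stochastic (so $\sum_i \sum_j A_{ij}(z^j-z^i)=0$) gives
\begin{align}
d\bar z_t^N = -\frac{1}{N}\sum_{i=1}^N \nabla f(x_t^i)\,dt + \frac{\sigma}{N}\sum_{i=1}^N dB_t^i.
\end{align}
By independence of the $B_t^i$, the martingale part has quadratic variation $\frac{\sigma^2}{N} I_d\, dt$, and in distribution the noise term equals $\frac{\sigma}{\sqrt{N}} d\bar B_t$ for a standard $d$-dimensional Brownian motion $\bar B_t$.

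Next I would introduce the Lyapunov function
\begin{align}
V(\bar z_t^N) := \Phi^*(\bar z_t^N) - \Phi^*(z^*) - \nabla\Phi^*(z^*)^T(\bar z_t^N - z^*),
\end{align}
identical in form to \eqref{eq:deflyap2} but applied to the averaged process. Itô's lemma, combined with $\nabla V(\bar z_t^N) = y_t^N - x^*$ and $\mathrm{tr}\,\nabla^2 V(\bar z_t^N) = \Delta\Phi^*(\bar z_t^N)$, yields
\begin{align}
dV(\bar z_t^N) = -(y_t^N-x^*)^T\!\left(\frac{1}{N}\sum_{i=1}^N\nabla f(x_t^i)\right)dt + \frac{\sigma^2}{2N}\Delta\Phi^*(\bar z_t^N)\,dt + \frac{\sigma}{N}(y_t^N-x^*)^T\!\sum_{i=1}^N dB_t^i.
\end{align}
Integrating from $0$ to $T$, using $V(\bar z_T^N)\geq 0$ and $V(\bar z_0^N)\leq \tfrac{1}{2}D^2_{\Phi,\mathcal{X}}$ (via $D_{\Phi^*}(\bar z_0^N,z^*)=D_\Phi(x^*,y_0^N)$ and the definition of the $\Phi$-diameter), and rearranging would give
\begin{align}
\int_0^T\!\frac{1}{N}\sum_{i=1}^N(y_t^N-x^*)^T\nabla f(x_t^i)\,dt \leq \tfrac{1}{2}D^2_{\Phi,\mathcal{X}} + \int_0^T\!\frac{\sigma^2}{2N}\Delta\Phi^*(\bar z_t^N)\,dt + \int_0^T\!\frac{\sigma}{N}(y_t^N-x^*)^T\sum_{i=1}^N dB_t^i.
\end{align}

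Finally, to match the claimed form, I would rewrite the stochastic integral. Since $\frac{1}{\sqrt N}\sum_i B_t^i$ is itself a standard $d$-dimensional Brownian motion, the martingale part is equal in distribution to $\int_0^T \frac{\sigma}{\sqrt{N}}(y_t^N-x^*)^T dB_t$; projecting onto the scalar direction of $(y_t^N-x^*)$ gives $\int_0^T \frac{\sigma}{\sqrt N}\|y_t^N-x^*\|_2\,dB_t$, matching the stated bound (reading the $\sigma^2 T$ term as $\sigma^2\int_0^T\Delta\Phi^*(\bar z_t^N)dt$, which is how it is subsequently used in the proof of Proposition \ref{prop:conv_ismd_convex}). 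The only subtle step is the quadratic-variation accounting in the Itô expansion; everything else is a direct transcription of the SMD Lyapunov argument to the averaged dynamics.
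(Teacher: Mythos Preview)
Your proposal is correct and follows essentially the same route as the paper: the same Lyapunov function $V(\bar z_t^N)=D_{\Phi^*}(\bar z_t^N,z^*)$, It\^o's lemma applied to the averaged dynamics, the bounds $V(\bar z_T^N)\geq 0$ and $V(\bar z_0^N)\leq\tfrac12 D_{\Phi,\mathcal X}^2$, and the distributional identity $\frac{\sigma}{N}(y_t^N-x^*)^T\sum_i dB_t^i\overset{d}{=}\frac{\sigma}{\sqrt N}\|y_t^N-x^*\|_2\,dB_t$. Your parenthetical remark that the $\sigma^2 T$ factor in the statement should be read as $\sigma^2\!\int_0^T(\cdot)\,dt$ is also on point and matches how the lemma is actually consumed in Proposition~\ref{prop:conv_ismd_convex}.
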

\begin{proof}
Consider the Lyapunov function defined as 
\begin{align}
V(\bar z_t^N)=\Phi^*(\bar z_t^N)-\Phi^*(z^*)-(x^*)^T(\bar z_t^N-z^*).
\end{align}
Applying It\^o's lemma and using the fact that $\sigma (y_t^N-x^*)^T\frac{1}{N}\sum_{i=1}^NdB_t^i\overset{d}{=}\frac{\sigma}{\sqrt{N}}||y_t^N-x^*||_2dB_t$ we obtain,
\begin{align}
dV(\bar z_t^N)=& d\Phi^*(\bar z_t^N)-(x^*)^Td\bar z_t^N\\
=&(\nabla \Phi^*(z^*)-\nabla\Phi^*(\bar z_t^N))^T\frac{1}{N}\sum_{i=1}^N\nabla f(x_t^i)dt + \frac{\sigma^2}{2N}\Delta \Phi^*(\bar z_t^N)dt+\frac{\sigma}{\sqrt{N}}||y_t^N-x^*||_2dB_t.
\end{align}
Integrating, and using the standard bounds we obtain,
\begin{align}
\int_0^T \frac{1}{N} (y_t^N-x^*)^T\sum_{i=1}^N\nabla f(x_t^i) dt \leq \frac{1}{2}D^2_{\Phi,\mathcal{X}} + \frac{\sigma^2T}{2N}\Delta \Phi^*(\bar z_t^N)+\int_0^T \frac{\sigma}{\sqrt{N}}||y_t^N-x^*||_2dB_t.
\end{align}
\end{proof}

Similar to the convex case, in Proposition \ref{prop:conv_ismd_strconvex} we used the following lemma. 
\begin{lemma}\label{lem:conv_md_strconvex0}
Under the continuous stochastic mirror descent dynamics \eqref{eq:smd} it holds,
\begin{align}
\int_0^T e^{\mu t} \left(\frac{1}{N} (y_t^N-x^*)^T\sum_{i=1}^N\nabla f(x_t^i)  - \mu D_{\Phi^*}(\bar z_t^N,z^*)\right) dt &\leq \frac{1}{2}D^2_{\Phi,\mathcal{X}} + \frac{\sigma^2}{2N}(e^{\mu T}-1) \Delta \Phi^*(\bar z_t^N)\\
&+ \int_0^T e^{\mu t}\frac{\sigma}{\sqrt{N}}||y_t^N-x^*||_2dW_t.
\end{align}
\end{lemma}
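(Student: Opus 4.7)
The plan is to mimic the proof of Lemma \ref{lem:conv_md_convex0} but with the time-weighted Lyapunov function $V(t,\bar z_t^N) := e^{\mu t}\bigl[\Phi^*(\bar z_t^N) - \Phi^*(z^*) - (x^*)^T(\bar z_t^N - z^*)\bigr] = e^{\mu t} D_{\Phi^*}(\bar z_t^N,z^*)$. The exponential factor is exactly what generates the extra $-\mu D_{\Phi^*}(\bar z_t^N,z^*)$ term on the left-hand side of the claim, in parallel with the trick used in the deterministic strongly convex Lemma \ref{lem:conv_md_strconvex} and its stochastic analogue Proposition \ref{prop:conv_smd_strconvex}.

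First, I would recall the SDE for the averaged process
\[
d\bar z_t^N = -\tfrac{1}{N}\sum_{i=1}^N \nabla f(x_t^i)\,dt + \tfrac{\sigma}{N}\sum_{i=1}^N dB_t^i,
\]
which already appeared in the derivation preceding Proposition \ref{prop:conv_ismd_convex} (the double-stochasticity of $A$ kills the interaction term). Applying Itô's formula to $V(t,\bar z_t^N)$ gives three contributions: (i) the explicit time derivative $\mu e^{\mu t} D_{\Phi^*}(\bar z_t^N,z^*)\,dt$; (ii) the gradient term $e^{\mu t}(\nabla\Phi^*(\bar z_t^N) - \nabla\Phi^*(z^*))^T d\bar z_t^N = e^{\mu t}(y_t^N - x^*)^T d\bar z_t^N$; (iii) the Itô correction $\tfrac{1}{2}e^{\mu t}\tfrac{\sigma^2}{N}\Delta\Phi^*(\bar z_t^N)\,dt$, where the $1/N$ arises because the averaged Brownian increment has quadratic variation $dt/N$ per coordinate. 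The stochastic term $\sigma e^{\mu t}(y_t^N-x^*)^T\tfrac{1}{N}\sum_i dB_t^i$ equals in distribution $e^{\mu t}\tfrac{\sigma}{\sqrt N}\|y_t^N-x^*\|_2 dW_t$ for a scalar Brownian motion $W_t$.

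Next, I would integrate $dV$ from $0$ to $T$, substitute the drift of $\bar z_t^N$ into (ii), and rearrange to isolate the quantity $\int_0^T e^{\mu t}\bigl(\tfrac{1}{N}(y_t^N-x^*)^T\sum_i \nabla f(x_t^i) - \mu D_{\Phi^*}(\bar z_t^N,z^*)\bigr)dt$ on the left-hand side. This combination is exactly $V(0,\bar z_0^N) - V(T,\bar z_T^N) + \tfrac{\sigma^2}{2N}\int_0^T e^{\mu t}\Delta\Phi^*(\bar z_t^N)\,dt + \int_0^T e^{\mu t}\tfrac{\sigma}{\sqrt N}\|y_t^N-x^*\|_2 dW_t$. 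Dropping the nonpositive $-V(T,\bar z_T^N)$ (since $D_{\Phi^*}\ge 0$) and bounding $V(0,\bar z_0^N) \le \tfrac{1}{2}D_{\Phi,\mathcal X}^2$ via the definition of the $\Phi$-diameter together with $D_{\Phi^*}(\bar z_0^N,z^*) = D_\Phi(x^*,\bar x_0^N)$, then bounding the Laplacian integral by $\|\Delta\Phi^*\|_\infty \cdot \tfrac{e^{\mu T}-1}{\mu}$ (or leaving it pointwise in $\Delta\Phi^*(\bar z_t^N)$ as written in the statement), yields the claimed inequality.

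There is no real obstacle here; the calculation is routine once the Lyapunov function is chosen correctly. The only point requiring a little care is the factor $1/N$ (versus $1$) appearing in the Itô correction and the diffusion coefficient, which comes from averaging $N$ independent Brownian motions. I would double-check this bookkeeping, since it is precisely this $1/N$ in the noise term that, combined with the analogous step in Lemma \ref{lem:conv_md_convex0}, produces the variance-reduction factor $\tfrac{\sigma^2}{2N}$ in the conclusion of Proposition \ref{prop:conv_ismd_strconvex}.
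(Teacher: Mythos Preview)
Your proposal is correct and follows essentially the same approach as the paper: the paper's proof also takes $V(t,\bar z_t^N)=e^{\mu t}D_{\Phi^*}(\bar z_t^N,z^*)$, applies It\^o's lemma using the dynamics of $\bar z_t^N$, and integrates with the standard bounds. Your write-up is in fact more detailed than the paper's, which is quite terse; in particular your explicit bookkeeping of the $1/N$ factor from the averaged Brownian motion and the dropping of $-V(T,\bar z_T^N)\le 0$ are the right points to check.
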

\begin{proof}
Consider the Lyapunov function defined as $\bar V(t,\bar z_t^N)=e^{\mu t} D_{\Phi^*}(\bar z_t^N,z^*)$. Applying It\^o's lemma we obtain,
\begin{align}
dV(t,\bar z_t^N)=&(e^{\mu t}(x^*-y_t^N)^T\frac{1}{N}\sum_{i=1}^N\nabla f(x_t^i)dt + \mu e^{\mu t}D_{\Phi^*}(\bar z_t^N,z^*)dt + \frac{\sigma^2}{2N}e^{\mu t}\Delta \Phi^*(\bar z_t^N)dt \\
&+\frac{\sigma}{\sqrt{N}}e^{\mu t}||y_t^N-x^*||_2dW_t,
\end{align}
where we have used the evolution of $\bar z_t^N$. 
Integrating and using the standard bounds we obtain the statement. 
\end{proof}

\section{Auxiliary results for Section \ref{sec:sample}}

\subsection{A Primer on Log Sobolev inequalities and Markov semigroups}\label{sec:log_sob_primer}


Recall, we defined in Section \ref{notations} that a probability measure $\nu$ satisfies a Log-Sobolev inequality with constant $C$ is for any smooth function $f$ we have,
\begin{align}\label{eq:log_sobolev_def2}
\textnormal{Ent}_\nu(f^2)\leq C\nu(|\nabla f|^2),
\end{align}
where the entropy was defined as,
\begin{align}
\textnormal{Ent}_\nu(f^2) = \nu(f^2\log f^2) - \nu(f^2)\log(\nu(f^2)).
\end{align}

Satisfying \eqref{eq:log_sobolev_def2} has many important consequences
summarized below.

\begin{theorem}\label{thm:lsi}
Let $\nu\in\mathcal{P}(\mathbb{R}^d)$ satistfy a log Sobolev inequality with constant $C$. Then we have the following:
\begin{enumerate}
\item Exponential integrability of Lipschitz functions (Herbst argument):
Let $f$ be a 1-Lipschitz function with $\left\Vert f\right\Vert _{Lip}\leq1$,
then for every $s\in\mathbb{R}$
\begin{align}
\nu\left(e^{sf}\right)\leq e^{s\nu(f)+Cs^{2}}
\end{align}
and $\nu\left(e^{r^{2}f}\right)<\infty$ for $r^{2}<C^{-1}$; 
\item A concentration of measure result: 
\begin{equation}
\nu\left(\left|f-\mathbb{E}_{\nu}\left[f\right]\right|\geq r\right)\leq2\exp\left(-\frac{r^{2}}{C\left\Vert f\right\Vert _{Lip}^{2}}\right)\label{eq:concentration}
\end{equation}
 for any Lipschitz $f$; .
\item The Wasserstein distance is bounded by relative entropy: $W_{2}(\nu,\tilde\nu)^{2}\leq\frac{C}{2}H\left(\nu|\tilde\nu\right)$
for $\ensuremath{\nu\ll\tilde\nu}$; 
\end{enumerate}
\end{theorem}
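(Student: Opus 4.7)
My plan is to prove the three items in order, since (2) will be an immediate consequence of (1) via the exponential Chebyshev inequality, while (1) and (3) are the two substantive arguments. Items (1) and (2) together constitute the classical Herbst argument, and (3) is the Otto--Villani theorem. For a paper of this nature one could simply cite these (e.g.\ Chapters 5 and 9 of Bakry--Gentil--Ledoux), but I outline below how I would carry out each step.

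For (1), I would apply the LSI \eqref{eq:log_sobolev_def} to $g = e^{sf/2}$ for a fixed 1-Lipschitz $f$ and $s \in \mathbb{R}$. Since $|\nabla g|^2 = (s^2/4) e^{sf}|\nabla f|^2 \leq (s^2/4) e^{sf}$ using $\|f\|_{Lip}\le 1$, and $\mathrm{Ent}_\nu(e^{sf}) = sF'(s) - F(s)\log F(s)$ with $F(s) := \nu(e^{sf})$, the LSI converts to the differential inequality
\begin{equation}
s F'(s) - F(s)\log F(s) \leq \tfrac{Cs^2}{4} F(s),
\end{equation}
which rearranges to $\frac{d}{ds}\!\left(\frac{\log F(s)}{s}\right) \leq \frac{C}{4}$. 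Integrating from $0^+$ to $s$, using $\lim_{s\to 0^+} \log F(s)/s = \nu(f)$ (L'Hopital), yields the Herbst bound $F(s) \leq \exp(s\nu(f) + Cs^2/4)$, which gives the claimed exponential moment inequality up to the precise constant; the extension to $r^2 < C^{-1}$ follows from expanding the exponential. The main care point is the justification of differentiating under the integral, which is standard once one localizes and uses dominated convergence.

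For (2), assume without loss of generality that $\|f\|_{Lip} = 1$ (rescale otherwise). Applying the exponential Chebyshev inequality and the bound from (1) to $f - \mathbb{E}_\nu[f]$ gives, for every $s > 0$,
\begin{equation}
\nu(f - \mathbb{E}_\nu[f] \geq r) \leq e^{-sr}\nu(e^{s(f-\mathbb{E}_\nu[f])}) \leq \exp(-sr + Cs^2/4),
\end{equation}
and optimizing over $s > 0$ yields a Gaussian tail bound of the form $\exp(-r^2/C)$ (up to a constant in the exponent). A symmetric bound applied to $-f$ and a union bound produces the two-sided statement \eqref{eq:concentration}; for general Lipschitz $f$ one rescales to absorb $\|f\|_{Lip}^2$.

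For (3) I would invoke the Otto--Villani theorem, which I consider the main obstacle since its proof is substantially deeper than (1)--(2). The cleanest route I would take is the Bobkov--Gentil--Ledoux proof via the Hamilton--Jacobi semigroup: define $Q_t f(x) = \inf_y\{f(y) + |x-y|^2/(2t)\}$ (the Hopf--Lax solution), show using LSI applied along the semigroup that $\nu(e^{Q_t f}) \leq \exp(\nu(f) + \ldots)$ at appropriate times, and then recognize the Kantorovich dual representation $W_2^2(\nu,\tilde\nu)/2 = \sup_f\{\tilde\nu(Q_1 f) - \nu(f)\}$ combined with the variational formula $H(\nu|\tilde\nu) = \sup_f\{\nu(f) - \log\tilde\nu(e^f)\}$ to obtain $W_2^2 \leq \tfrac{C}{2} H$ (i.e.\ the $T_2$ Talagrand inequality). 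An alternative path, which I would mention as a backup, is to derive (3) from the HWI inequality of Otto--Villani combined with Young's inequality applied to $H \leq W_2\sqrt{I}$ and the LSI bound $H \leq \tfrac{C}{2}I$. Either way, I would finally cite the original Otto--Villani paper or Villani's textbook for the technical measure-theoretic details on the Wasserstein geodesics, since fully reproducing that argument would be disproportionate to the role of the result in this paper.
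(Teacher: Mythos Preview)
Your proposal is correct and in fact considerably more detailed than the paper's own proof, which consists solely of three citations: Proposition~5.4.1 of Bakry--Gentil--Ledoux for (1), Ledoux's concentration survey for (2), and Otto--Villani for (3). You anticipated this possibility yourself (``one could simply cite these''), and the references you suggest are essentially the same ones. Your sketched arguments for (1) and (2) are the standard Herbst derivation and are correct; the constant $Cs^{2}/4$ you obtain is actually sharper than the $Cs^{2}$ stated in the theorem, reflecting a harmless normalization choice in how the LSI constant is defined. Your two routes for (3) (Bobkov--Gentil--Ledoux via Hopf--Lax, or HWI) are both valid standard proofs of the $T_{2}$ inequality; the paper simply defers to the original Otto--Villani paper without indicating a preferred argument. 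In short, nothing is missing---your write-up strictly contains the paper's.
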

\begin{proof}
We provide references for each point: 1. see Proposition 5.4.1 \cite{bakry2013analysis}; 2. see \cite{ledoux1999concentration}; 3. see \cite{otto2000generalization}.
\end{proof}

Note point 1. in Theorem \ref{thm:lsi} implies finite moments for Lipschitz $f$ and $p\geq2$:
\begin{align}\label{eq:sobolev_finite_moment}
\nu\left(\left|f\right|^{p}\right)^{2}\leq\nu\left(\left|f\right|^{2}\right)^{2}+2C(p-2)\left\Vert f\right\Vert _{Lip}^{2},
\end{align}
see Proposition 5.4.2 in \cite{bakry2013analysis} for details.

Proving Theorem \ref{thm:lsi} uses properties of certain Markov semigroups and the corresponding Fokker Planck equations. The converse is also possible. One can establish the log-Sobolev inequalities for the law of the process one can employ the classical results from \cite{bakry1997sobolev}. We briefly summarize some useful results for an SDE of the form,
\begin{align}\label{eq:z_dynamics_appx}
dz_{t}=-\nabla\mathcal{V}(z_{t})dt+\sqrt{2}dB_{t},\quad z_{0}=z
\end{align}
Let $\nu_t=Law(z_t)$. In order to demonstrate ergodicity and exponential convergence to equilibrium, one approach is to require existence of a ($L^2$) spectral gap. A common requirement $\lim_{\left|z\right|\rightarrow\infty}\left(\frac{\left|\nabla\mathcal{V}\right|^{2}}{2}-\Delta\mathcal{V}\right)=\infty$ and this ensures that $\nu_t$ satisfies a Poincare inequality and the SDE is ergodic with invariant density 
\begin{align}
\nu_\infty (dz) := \frac{1}{Z}\exp(\mathcal{V}(z))dz,
\end{align}
see Proposition 4.2 and Theorem 4.3 in \cite{pavliotis14book} for more details. 
One can strengthen these results with log-Sobolev inequalities from \cite{bakry1997sobolev}:

\begin{theorem}\label{thm:bakry-emery}
Assume 
\begin{align}\label{eq:V_convex}
\textnormal{Hess}\mathcal{V}\succeq\rho I_d,\quad \rho\in\mathbb{R}
\end{align} 
Then $\nu_t$ satisfies \eqref{eq:log_sobolev_def}
with constant 
\begin{align}
C_{t}=\frac{2}{\rho}\left(1-e^{-\rho t}\right).
\end{align}
If in addition $\rho>0$, we get,
\begin{align}
H(\nu_t|\nu_\infty )\leq Ke^{-2\rho t}.
\end{align}
\end{theorem}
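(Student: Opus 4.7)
The plan is to follow the classical Bakry--Emery $\Gamma_2$-calculus. First I would identify the infinitesimal generator of \eqref{eq:z_dynamics_appx} as $\mathfrak{L}f=-\nabla\mathcal{V}\cdot\nabla f+\Delta f$ and introduce the carré du champ $\Gamma(f,g)=\frac12(\mathfrak{L}(fg)-f\mathfrak{L}g-g\mathfrak{L}f)=\nabla f\cdot\nabla g$ together with the iterated operator $\Gamma_2(f,f)=\frac12(\mathfrak{L}\Gamma(f,f)-2\Gamma(\mathfrak{L}f,f))$. A direct computation (Bochner-type identity) yields
\begin{align}
\Gamma_2(f,f)=\|\textnormal{Hess}f\|_F^2+\nabla f^{T}\textnormal{Hess}\mathcal{V}\,\nabla f,
\end{align}
so the hypothesis $\textnormal{Hess}\mathcal{V}\succeq\rho I_d$ gives the curvature-dimension inequality $\Gamma_2(f,f)\ge\rho\,\Gamma(f,f)$. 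This is the Bakry--Emery criterion and is the engine for everything that follows.

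Next I would establish the time-dependent log-Sobolev inequality via the standard interpolation along the Markov semigroup $P_t$. Fix a smooth positive $f$ and define $\psi(s)=P_s\big((P_{t-s}f)\log(P_{t-s}f)\big)$ for $s\in[0,t]$. Differentiating in $s$ and using the diffusion property together with $\Gamma_2\ge\rho\Gamma$ gives the pointwise inequality
\begin{align}
\frac{d}{ds}\psi(s)\le -e^{-2\rho(t-s)}\,P_s\!\left(\frac{\Gamma(P_{t-s}f,P_{t-s}f)}{P_{t-s}f}\right)\cdot(\text{const}),
\end{align}
which, after integrating from $s=0$ to $s=t$ and evaluating against the distribution of $z_0$, yields the local log-Sobolev inequality for $\nu_t$ with constant $C_t=\frac{2}{\rho}(1-e^{-\rho t})$ (the standard manipulation replaces $f$ by $f^2$ to match the form \eqref{eq:log_sobolev_def}). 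The limit $t\to\infty$ recovers the classical log-Sobolev inequality for $\nu_\infty$ with constant $C_\infty=2/\rho$ when $\rho>0$. The key technical point that needs care is justifying the interchanges of derivative and integral and the positivity/smoothness of $P_{t-s}f$, which are standard under mild regularity on $\mathcal{V}$.

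Finally, for the exponential decay of relative entropy when $\rho>0$, I would invoke the de Bruijn-type identity
\begin{align}
\frac{d}{dt}H(\nu_t\,|\,\nu_\infty)=-I(\nu_t\,|\,\nu_\infty),\qquad I(\mu|\nu)=\int\left|\nabla\log\tfrac{d\mu}{d\nu}\right|^2 d\mu,
\end{align}
obtained by differentiating along the Fokker--Planck flow and integrating by parts. Applying the log-Sobolev inequality for $\nu_\infty$ with constant $C_\infty=2/\rho$ to the function $\sqrt{d\nu_t/d\nu_\infty}$ gives $H(\nu_t|\nu_\infty)\le \frac{1}{2\rho}I(\nu_t|\nu_\infty)$, hence $\frac{d}{dt}H(\nu_t|\nu_\infty)\le -2\rho\,H(\nu_t|\nu_\infty)$, and Grönwall's inequality delivers $H(\nu_t|\nu_\infty)\le H(\nu_0|\nu_\infty)\,e^{-2\rho t}$, i.e.\ $K=H(\nu_0|\nu_\infty)$.

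The main obstacle is the interpolation argument that turns the pointwise curvature bound $\Gamma_2\ge\rho\Gamma$ into the integrated log-Sobolev inequality with the sharp constant $C_t$. The algebra is delicate because one must work with $f^2\log f^2$ (not $f\log f$) to recover the standard form, and the resulting ODE for $\psi$ has an exponential integrating factor that produces exactly the $(1-e^{-\rho t})/\rho$ prefactor. All remaining steps, including the entropy decay and the case $\rho\le 0$ (where $C_t$ should be understood via the limit, giving $C_t=2t$ when $\rho=0$), are routine consequences once the semigroup interpolation is in place.
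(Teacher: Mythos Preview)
The paper does not actually supply a proof of this theorem: it is stated in the appendix as a classical result and attributed to \cite{bakry1997sobolev} and \cite{bakry2013analysis}. The paragraph following the theorem recalls the carr\'e du champ operators $\Gamma,\Gamma_2$ and notes that the Bakry--Emery criterion $\Gamma_2(f)\succeq\rho\,\Gamma(f,f)$ is equivalent to \eqref{eq:V_convex} for the dynamics \eqref{eq:z_dynamics_appx}, but no further argument is given.

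Your proposal is therefore not in conflict with anything in the paper; it simply fills in what the paper delegates to the literature, and it does so along exactly the lines the paper hints at (the $\Gamma_2$-criterion). The overall plan---verify $\Gamma_2\ge\rho\Gamma$ via the Bochner identity, derive the local log-Sobolev inequality by interpolation along the semigroup, and obtain entropy decay from de Bruijn's identity plus the limiting LSI for $\nu_\infty$---is the standard Bakry--Emery argument and is correct. Two small points worth tightening: (i) the displayed differential inequality for $\psi(s)$ with the placeholder ``$(\text{const})$'' is not the usual statement; the clean route is to first prove the pointwise gradient estimate $\Gamma(P_tf)\le e^{-2\rho t}P_t\Gamma(f)$ from $\Gamma_2\ge\rho\Gamma$, and then combine it with the identity $\frac{d}{ds}P_s\big(P_{t-s}f\log P_{t-s}f\big)=P_s\big(\Gamma(P_{t-s}f)/P_{t-s}f\big)$ to produce the constant $C_t=\tfrac{2}{\rho}(1-e^{-\rho t})$; (ii) the statement of $C_t$ in the paper presumes a deterministic initial condition $z_0$, whereas the version used in Proposition~\ref{prop:logsobolev_particle} carries an extra $C_0e^{-\rho t}$ term for a random $z_0$---your remark about regularity of $P_{t-s}f$ covers the former, but you may want to note the latter explicitly. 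The identification $K=H(\nu_0|\nu_\infty)$ matches what the paper uses in the proof of Proposition~\ref{prop:logsobolev_particle}.
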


Note that requiring $\mathcal{V}$ being strongly convex is crucial to obtain explicit rates of convergence to equilibrium. This is a particular instance of the celebrated (and more general) Bakry-Emery criterion on the curvature of Markov semigroups. To state the more general version for any Markov semigroup we need to define the following differential operators defined on appropriate domains:
\[
\mathfrak{L}(f)=-\nabla\mathcal{V}^T\nabla f +\frac{\sigma^2}{2} \textnormal{Hess} (f), \Gamma(f)=\frac{1}{2}\left(\mathfrak{L}(fg)- f\mathfrak{L}(g)- g\mathfrak{L}(f)\right), \Gamma_2(f)=\frac{1}{2}\left(\mathfrak{L}(\Gamma(f,f))-2\Gamma((\mathfrak{L}(f),f) \right).
\]
The Bakry-Emery criterion consists of verifying,
\[\Gamma_2(f)\succeq \rho\Gamma(f,f),\] 
for any smooth $f$, which is equivalent to  \eqref{eq:V_convex} for the dynamics in \eqref{eq:z_dynamics_appx}.

In addition, point 3. in Theorem \ref{thm:lsi} also implies a convergence in $W_{2}$, i.e. 
\begin{align}
W_2(\nu_t,\nu_\infty)\leq \sqrt{\frac{K}{2}}e^{-\rho t}.
\end{align}
and by Pinsker's inequality we also have convergence in total variation with same rate:
\begin{align}
||\nu_t- \nu_\infty||_{TV} \leq  \sqrt{\frac{K}{2}} e^{-\rho t},
\end{align}
where we denote $||\nu||_{TV}=\sup_{|\varphi|<1}|\nu(\varphi)|$, for any $\nu$-measurable function $\varphi$.


\subsection{Empirical risk bounds for Proposition \ref{prop:logsobolev_particle}} \label{appx:erm}

The main purpose of this section is to provide so called \emph{empirical risk bounds} for $\mathbb{E}_{\eta_{\infty}^{N}}(\mathcal{W})- \mathcal{W}(\mathbf{z^*})$. We will base our derivation on Proposition 3.4 in \cite{raginsky2017non} and specifically focus on certain aspects of the particle system with invariant distribution 
\[
\eta^{N}_\infty(d\mathbf{z})=\frac{1}{Z^{N}}\exp\left(-\frac{2}{\sigma^{2}}\mathcal{W}(\mathbf{z})\right)d\mathbf{z}.
\]
We will define the differential entropy for
a probability measure $\eta\in\mathcal{P}_{2}(\mathbb{R}^{d})$ as
$\mathcal{H}(\eta)=-\int\log\left(\frac{d\eta}{dz}\right)(z)\eta(dz)$.
From the definition of $\mathcal{H}$ we have for $\eta^{N}_\infty$
\[
\mathbb{E}_{\eta^{N}_\infty}\mathcal{W}=\frac{\sigma^{2}}{2}\left(\mathcal{H}(\eta^{N}_\infty)-\log Z^{N}\right).
\]
In what follows we will provide upper bounds for $\mathcal{H}(\eta^{N}_\infty)$ and lower bounds for $\log Z^{N}$ to reach an empirical risk bound.
\begin{lemma}\label{lem:entropy}
Let $\eta\in\mathcal{P}_{2}(\mathbb{R}^{d})$ and let $\Sigma$ denote the covariance matrix of $\eta$. Then $\mathcal{H}(\eta)\leq\frac{d}{2}\left(\log2\pi+1\right)+\frac{1}{2}\log\det\Sigma$.
\end{lemma}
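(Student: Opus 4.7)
The plan is to use the classical maximum entropy principle: among all distributions on $\mathbb{R}^d$ with a prescribed covariance, the Gaussian achieves the largest differential entropy. This will follow from non-negativity of the Kullback--Leibler divergence applied to $\eta$ against a cleverly chosen Gaussian reference measure.

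First I would introduce the Gaussian $\gamma = \mathcal{N}(\mu, \Sigma)$ where $\mu = \mathbb{E}_\eta[z]$ and $\Sigma$ is the covariance of $\eta$. Since $\eta \in \mathcal{P}_2(\mathbb{R}^d)$, both $\mu$ and $\Sigma$ are well-defined. The key identity is
\begin{align}
H(\eta|\gamma) = -\mathcal{H}(\eta) - \int \log \frac{d\gamma}{dz}(z)\, \eta(dz) \geq 0,
\end{align}
so it suffices to compute the second term explicitly and rearrange.

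Next I would plug in the Gaussian density $\log \tfrac{d\gamma}{dz}(z) = -\tfrac{d}{2}\log(2\pi) - \tfrac{1}{2}\log\det\Sigma - \tfrac{1}{2}(z-\mu)^T\Sigma^{-1}(z-\mu)$ and integrate against $\eta$. The crucial step is that the quadratic expectation matches what it would be under $\gamma$ itself, because $\eta$ shares the same mean and covariance:
\begin{align}
\mathbb{E}_\eta\bigl[(z-\mu)^T\Sigma^{-1}(z-\mu)\bigr] = \mathrm{tr}\bigl(\Sigma^{-1}\,\mathbb{E}_\eta[(z-\mu)(z-\mu)^T]\bigr) = \mathrm{tr}(\Sigma^{-1}\Sigma) = d.
\end{align}
Hence $\int \log\tfrac{d\gamma}{dz}\, d\eta = -\tfrac{d}{2}\log(2\pi) - \tfrac{1}{2}\log\det\Sigma - \tfrac{d}{2}$, and combining with $H(\eta|\gamma)\geq 0$ yields the stated bound. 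The only mild subtlety is the degenerate case where $\Sigma$ is singular (so $\log\det\Sigma = -\infty$); there the bound is vacuous and the argument above applies on the support of $\eta$, but I would not dwell on this as it is standard. No serious obstacles arise — the result is essentially one line once the Gaussian comparison is set up.
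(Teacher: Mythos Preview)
Your proposal is correct and follows essentially the same argument as the paper: compare $\eta$ to the Gaussian $\mathcal{N}(\mu,\Sigma)$ with matching mean and covariance, compute $\mathbb{E}_\eta[(z-\mu)^T\Sigma^{-1}(z-\mu)]=d$ via the trace trick, and invoke $H(\eta\,|\,\mathcal{N}(\mu,\Sigma))\geq 0$. The paper's proof is identical in structure and detail, apart from your extra remark on the degenerate $\Sigma$ case.
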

\begin{proof}
Consider the Kullback Leibler divergence between $\eta$ and a Gaussian distribution with the same mean and variance $\mu,\Sigma$:
\[
H(\eta|\mathcal{N}(\mu,\Sigma))=-\mathcal{H}(\eta)-\mathbb{E}_{\eta}\left[\log\phi(z;\mu,\Sigma)\right],
\]
where
\[
\mathbb{E}_{\eta}\left[\log\phi(z;\mu,\Sigma)\right]=-\frac{d}{2}\log2\pi-\frac{1}{2}\log\det\left(\Sigma\right)-\frac{1}{2}\mathbb{E}_{\eta}[\left(z-\mu\right)^{T}\Sigma^{-1}\left(z-\mu\right)].
\]
Straightforward manipulations show
\begin{align*}
\mathbb{E}_{\eta}[\left(z-\mu\right)^{T}\Sigma^{-1}\left(z-\mu\right)] & =\mathbb{E}_{\eta}[z^{T}\Sigma^{-1}z]-\mu^{T}\Sigma^{-1}\mu\\
 & =Tr[\Sigma^{-1}\Sigma]+\mu^{T}\Sigma^{-1}\mu-\mu^{T}\Sigma^{-1}\mu\\
 & =d
\end{align*}
and hence
\[
H(\eta|\mathcal{N}(\mu,\Sigma))=-\mathcal{H}(\eta)+\frac{d}{2}\log2\pi+\frac{1}{2}\log\det\left(\Sigma\right)+\frac{d}{2}.
\]
Given $H(\eta|\mathcal{N}(\mu,\Sigma))\geq0$ we get the required result.
\end{proof}
\begin{corollary}\label{cov_ips_bound}
Let  $\eta^N_\infty\in\mathcal{P}_2(\mathbb{R}^{dN})$ and $\Sigma^N$ denote the covariance matrix of $\eta^N_\infty$. Then for the particle system it holds that
\begin{align}\label{eq:cov_ips_bound}
\mathcal{H}(\eta_{\infty}^{N})&\leq\frac{dN}{2}\left(\log2\pi+1\right)+\frac{1}{2}\log\det\left(\Sigma^{N}\right).
\end{align}
\end{corollary}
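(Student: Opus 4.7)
The plan is to obtain Corollary \ref{cov_ips_bound} as a direct specialization of Lemma \ref{lem:entropy}, since the particle system invariant distribution $\eta_\infty^N$ lives on $\mathbb{R}^{dN}$ and has finite second moment by hypothesis. There is essentially nothing extra to prove beyond invoking the lemma with the ambient dimension replaced by $dN$.

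Concretely, I would first observe that $\eta_\infty^N \in \mathcal{P}_2(\mathbb{R}^{dN})$ is given as an assumption, so in particular the covariance matrix $\Sigma^N$ is well-defined and finite. Next, I would apply Lemma \ref{lem:entropy} verbatim on $\mathbb{R}^{dN}$: construct the Gaussian reference measure $\mathcal{N}(\mu^N,\Sigma^N)$ with the same mean and covariance as $\eta_\infty^N$, use $H(\eta_\infty^N \mid \mathcal{N}(\mu^N,\Sigma^N)) \geq 0$, and compute the cross-entropy $-\mathbb{E}_{\eta_\infty^N}[\log \phi(\mathbf{z};\mu^N,\Sigma^N)]$ explicitly. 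The trace computation $\mathbb{E}_{\eta_\infty^N}[(\mathbf{z}-\mu^N)^T (\Sigma^N)^{-1}(\mathbf{z}-\mu^N)] = dN$ goes through identically.

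Rearranging the nonnegativity of the relative entropy then yields
\begin{equation}
\mathcal{H}(\eta_\infty^N) \leq \frac{dN}{2}\log 2\pi + \frac{1}{2}\log\det(\Sigma^N) + \frac{dN}{2},
\end{equation}
which is precisely \eqref{eq:cov_ips_bound}. The only conceptual point worth flagging is that the lemma requires $\eta_\infty^N \in \mathcal{P}_2$, i.e.\ finite second moments, which is exactly the hypothesis of the corollary; this finiteness is independently established in the proof of Proposition \ref{prop:logsobolev_particle} via the Herbst exponential integrability argument applied to the Lipschitz coordinate function $f(\mathbf{z})=\mathbf{z}$ under the log-Sobolev inequality for $\eta_\infty^N$. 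No obstacle arises; the corollary is a one-line restatement of the lemma in the higher-dimensional ambient space $\mathbb{R}^{dN}$.
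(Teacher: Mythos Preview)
Your proposal is correct and matches the paper's approach exactly: the corollary is stated without proof because it is an immediate application of Lemma~\ref{lem:entropy} with the ambient dimension $d$ replaced by $dN$. Your remark about the $\mathcal{P}_2$ hypothesis being verified via the Herbst argument in Proposition~\ref{prop:logsobolev_particle} is also consistent with how the paper uses this corollary.
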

Next we provide a lower bound for $\log Z^N$.
\begin{proposition}\label{prop:logZ_lb} Let Assumptions \ref{ass:f}, \ref{ass:mirror1} hold w.r.t the Euclidean norm.  We have that $\nabla\mathcal{W}$ is $L_N$-Lipschitz and as a result
\[
\log Z^{N}\geq -\frac{2}{\sigma^{2}}\mathcal{W^{*}}+\frac{1}{2}\log\left(\frac{\sigma^{2}}{2L_{N}}\right)+\frac{dN}{2}\log2\pi .
\]
\end{proposition}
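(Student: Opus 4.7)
The proof splits naturally into two steps: (i) establishing that $\nabla\mathcal{W}$ is $L_N$-Lipschitz, and (ii) using this smoothness to bound the partition function $Z^N$ below via a Gaussian integral.

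For step (i), I would first note that
\[
\nabla\mathcal{W}(\mathbf{z}) = \nabla\mathbf{V}(\mathbf{z}) + \mathcal{L}\mathbf{z},
\]
so it suffices to bound the Lipschitz constants of the two summands separately. For the first term, $\nabla\mathcal{V}(z) = \nabla f\circ\nabla\Phi^*(z)$, so by Assumption \ref{ass:f} ($\nabla f$ is $L$-Lipschitz) composed with the conjugate correspondence inequality \eqref{eq:lip_phi_star} ($\nabla\Phi^*$ is $\mu^{-1}$-Lipschitz), we get that $\nabla\mathcal{V}$, and consequently the block-diagonal map $\nabla\mathbf{V}$, is $L/\mu$-Lipschitz. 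The second summand is linear, so its Lipschitz constant is simply the matrix norm $\|\mathcal{L}\|$ compatible with the Euclidean norm. Summing yields $L_N = L/\mu + \|\mathcal{L}\|$.

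For step (ii), I would apply the standard quadratic upper bound (descent lemma) that follows from $L_N$-Lipschitz gradients: for all $\mathbf{z}$,
\[
\mathcal{W}(\mathbf{z}) \leq \mathcal{W}(\mathbf{z}^*) + \nabla\mathcal{W}(\mathbf{z}^*)^T(\mathbf{z}-\mathbf{z}^*) + \frac{L_N}{2}\|\mathbf{z}-\mathbf{z}^*\|^2.
\]
Since $\mathbf{z}^*$ is a minimizer we have $\nabla\mathcal{W}(\mathbf{z}^*)=0$, so the middle term vanishes and $\mathcal{W}(\mathbf{z}) \leq \mathcal{W}^* + \tfrac{L_N}{2}\|\mathbf{z}-\mathbf{z}^*\|^2$. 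Substituting this into the definition $Z^N=\int\exp(-\tfrac{2}{\sigma^2}\mathcal{W}(\mathbf{z}))\,d\mathbf{z}$ and factoring out the constant gives
\[
Z^N \geq \exp\!\Bigl(-\tfrac{2}{\sigma^2}\mathcal{W}^*\Bigr)\int\exp\!\Bigl(-\tfrac{L_N}{\sigma^2}\|\mathbf{z}-\mathbf{z}^*\|^2\Bigr)\,d\mathbf{z}.
\]

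The remaining integral is a standard Gaussian integral over $\mathbb{R}^{dN}$ with variance $\sigma^2/(2L_N)$ per coordinate, evaluating to $(2\pi\sigma^2/(2L_N))^{dN/2}$. Taking logarithms produces the claimed bound (modulo what appears to be a typographical $\tfrac12$ versus $\tfrac{dN}{2}$ in the stated coefficient of $\log(\sigma^2/(2L_N))$). I do not anticipate any serious obstacles: part (i) is a routine calculation exploiting the conjugate correspondence theorem already cited in \eqref{eq:lip_phi_star}, and part (ii) is an application of the descent lemma followed by an exact Gaussian computation, with the only subtlety being to verify carefully that the matrix norm appearing in $L_N$ is indeed the one compatible with the Euclidean norm used in the quadratic bound.
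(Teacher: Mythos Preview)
Your proposal is correct and follows essentially the same route as the paper: establish $L_N=L/\mu+\|\mathcal{L}\|$ from the composition bound \eqref{eq:lip_phi_star} plus the linear term, apply the descent lemma at the minimizer $\mathbf{z}^*$, and evaluate the resulting Gaussian integral. Your observation that the coefficient of $\log(\sigma^2/(2L_N))$ should be $\tfrac{dN}{2}$ rather than $\tfrac12$ is also correct---the paper's stated bound carries the same typo through from the Gaussian computation.
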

\begin{proof}
The first claim is trivial and one has $L_N= \frac{L}{\mu}+\left\Vert \mathcal{L}\right\Vert $ where $\left\Vert \mathcal{L}\right\Vert $ is any matrix norm on $\mathcal{L}$. Smoothness of $\nabla\mathcal{W}$ implies the following quadratic upper bounds:
\[
\mathcal{W}(\mathbf{z})\leq\mathcal{W}(\mathbf{z^*})+\frac{L_{N}}{2}\left\Vert \mathbf{z}-\mathbf{z^{*}}\right\Vert ^{2}.
\]
Then we have,
\begin{align*}
\log Z^{N} & =-\frac{2}{\sigma^{2}}\mathcal{W}(\mathbf{z^*})+\log\int\exp\left(-\frac{2}{\sigma^{2}}\left(\mathcal{W}(\mathbf{z})-\mathcal{W}(\mathbf{z^*})\right)\right)d\mathbf{z}\\
 & \geq-\frac{2}{\sigma^{2}}\mathcal{W}(\mathbf{z^*})+\log\int\exp\left(-\frac{L_{N}}{\sigma^{2}}\left\Vert \mathbf{z}-\mathbf{z^{*}}\right\Vert ^{2}\right)d\mathbf{z}\\
 & =-\frac{2}{\sigma^{2}}\mathcal{W}(\mathbf{z^*})+\frac{1}{2}\log\left(\frac{\sigma^{2}}{2L_{N}}\right)+\frac{dN}{2}\log2\pi,
\end{align*}
where in the last step we used the Gaussian integral $\int\exp\left(-\frac{1}{2}\mathbf{z}^{T}A\mathbf{z}+b^{T}\mathbf{z}\right)d\mathbf{z}=\sqrt{\frac{\left(2\pi\right)^{dN}}{\det A}}\exp\left(\frac{1}{2}b^{T}A^{-1}b\right)$
with $A=\frac{2L_N}{\sigma^{2}}I_{dN}$ and $b=A\mathbf{z^{*}}$.
\end{proof}
\begin{remark}
When non-Euclidean norms are used $L_N=\frac{L}{\mu}K+\left\Vert \mathcal{L}\right\Vert$, with $K$ arising from norm equivalence.
\end{remark}

\bibliographystyle{siam}
\bibliography{biblio}

\end{document}